\documentclass[12pt, reqno, twoside]{amsart}
\usepackage{paper_style}

\usepackage[csfont, paper]{std_math}

\usepackage{local}

\IfFileExists{./\foldername/0_abstract.tex}{}{}

\hypersetup{pdfauthor={Segev Gonen Cohen}
	,pdftitle=Actions of lattices in p-adic and S-arithmetic groups on manifolds
	,urlcolor=blue
	,citecolor=red
    ,linkcolor=blue
	,colorlinks=true
}
\begin{document}
		
	\title{Actions of lattices in $p$-adic and $S$-arithmetic groups on manifolds}
\author[S.~Gonen Cohen]{Segev Gonen Cohen}
\address{Department of Mathematics, ETH Zürich, Zürich, Switzerland}
%\address{ETH Zurich, Department of Mathematics, Rämistrasse 101, 8092 Zurich, Switzerland}
\email{segev.gonencohen@math.ethz.ch}

\thanks{The author acknowledges support of the Institut Henri Poincaré (UAR 839 CNRS-Sorbonne Université), LabEx CARMIN (ANR-10-LABX-59-01), FNS (Fonds National Suisse, no. 10.003.145), and the ANR (Agence Nationale de la Recherche, ANR-24-CE93-0016)} 

%\keywords{}
\subjclass{Primary 22F05; Secondary 22E35, 22E40, 37C85, 37D25}

\date{\today.}
	
\begin{abstract}
    We prove that an action by $C^1$-diffeomorphisms of a lattice in a simple $p$-adic group on a compact manifold is finite, provided that the dimension of the manifold is less than the rank of the group. We extend this statement to lattices in totally disconnected $S$-arithmetic groups, where the critical dimension is the maximal rank of its simple factors. This uses the machinery developed by Brown, Fisher, and Hurtado. 
\end{abstract} 
	\maketitle

    \tableofcontents

%%%%%%%%%%%%%%%%%%%%%%%%%%%%%%%%%%%%%%%%%%%%%%%%%%%%%%%%%%%
    \renewcommand{\thetheo}{\Alph{theo}}
\setcounter{theo}{0}

\section{Introduction}

In breakthrough work \cite{brown_zimmers_2020}, Aaron Brown, David Fisher, and Sebastian Hurtado proved that lattices in higher rank simple real Lie groups admit no infinite smooth actions on compact manifolds of low enough dimension. In this paper we extend their methods to obtain results for lattices in more general $S$-arithmetic groups, including to lattices in higher rank $p$-adic groups. Our main result is that below a certain dimension, all actions are finite. 

\myindent In all that follows $\mathcal{V}$ denotes the set of valuations of $\mathbb{Q}$, this is the set of prime numbers and the Archimedean place $\{\infty\}$. For any $p\in \mathcal{V}$, $\Qp$ denotes the corresponding completion of $\mathbb{Q}$, with the understanding that $\mathbb{Q}_\infty = \R$. For any manifold, $\Diff^1(M)$ denotes the group of $C^1$-diffeomorphisms. Throughout, all manifolds will be assumed connected and without boundary.

\begin{theo} 
\label{theo:main_theorem}
    Let $S \subset \mathcal{V}\backslash\{\infty\}$ be a finite set, and for each $p\in S$ let $\mathbb{G}_p$ be a connected simple algebraic group defined over $\Qp$. Consider $G = \prod_{p\in S} \mathbb{G}_p(\mathbb{Q}_p)$ and an irreducible lattice $\Gamma \leq G$. Let $M$ be a compact real manifold and let $\alpha:\Gamma \to \Diff^1(M)$ be a homomorphism. Denote by $r = \max_{p\in S}\{\operatorname{rank}_{\Qp}(\mathbb{G}_p)\}$ and assume that $r \geq 2$. Suppose that either
    \begin{itemize}
        \item $\dim M < r$, or
        \item $\dim M = r$ and $\alpha(\Gamma)$ preserves a $C^1$-volume form on $M$. 
    \end{itemize}
    Then $\alpha(\Gamma)$ is finite.
\end{theo}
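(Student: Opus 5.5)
We follow the strategy of Brown--Fisher--Hurtado, organized in three stages: subexponential growth of derivatives, strong property (T) for the lattice, and a Margulis-type superrigidity/normal subgroup argument that turns the resulting measure-rigidity information into finiteness.

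\textbf{Setup: suspension and Lyapunov exponents.} Fix $p_0\in S$ with $\operatorname{rank}_{\mathbb{Q}_{p_0}}(\mathbb{G}_{p_0})=r$ and a maximal $\mathbb{Q}_{p_0}$-split torus $A\le \mathbb{G}_{p_0}(\mathbb{Q}_{p_0})$. Its cocharacter lattice gives a copy $\mathbb{Z}^r\le A$ of ``diagonal'' elements. Form the suspension $M^\alpha=(G\times M)/\Gamma$. This is a fiber bundle over the finite-volume space $G/\Gamma$, whose base is totally disconnected. Its fibers are copies of $M$, and it carries a $G$-action. Since the base is totally disconnected, the fiberwise derivative cocycle $D_F$ over the $\mathbb{Z}^r$-action is a genuine measurable cocycle. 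Fiberwise Lyapunov exponents for $A$-invariant (or $\mathbb{Z}^r$-invariant) probability measures on $M^\alpha$ are then defined exactly as in the real case. Each exponent is a linear functional on $\mathbb{Z}^r\otimes\mathbb{R}\cong\mathbb{R}^r$.

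\textbf{Step 1: uniform subexponential growth.} We aim to show that for every $\varepsilon>0$ and every $\gamma\in\Gamma$ there is $C$ with $\|D\alpha(\gamma)\|\le Ce^{\varepsilon\ell(\gamma)}$, where $\ell$ is a word length. In the $p$-adic setting, $\Gamma$ is cocompact whenever $S$ is a single prime; in general we use quasi-isometric embedding results (Lubotzky--Mozes--Raghunathan) to compare $\ell$ with distance in $G$. Arguing by contradiction, exponential growth along a sequence yields, via Brown--Fisher--Hurtado's averaging over F{\o}lner sets in $A$ and the unipotent-free structure, an $A$-invariant measure $\mu$ on $M^\alpha$ with a nonzero fiberwise exponent. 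We then average along root subgroups $U^\beta$ whose roots are not proportional to the nonzero exponents. The dimension count is the crucial input: at most $\dim M<r$ fiberwise exponents are nonzero, or $\dim M=r$ with exponents summing to $0$ in the volume case, while the restricted roots span an $r$-dimensional space. Hence some resonance-free combination of root groups exists. This produces a measure that projects to Haar on $G/\Gamma$, using Ratner-type theorems in the $p$-adic/$S$-arithmetic form due to Ratner and Margulis--Tomanov, and that is invariant under a group containing $\mathbb{G}_{p_0}$. Invariance under all of $\mathbb{G}_{p_0}$ forces exponents to vanish: a Weyl element conjugates $\lambda$ to another exponent, so the finitely many exponents would be Weyl-invariant linear functionals, hence zero. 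This contradicts nonzero exponent. We expect this step to be the main obstacle. The specific difficulties are the $p$-adic measure-classification inputs, and the high-entropy argument for coarse Lyapunov subgroups in the non-split, non-archimedean case, including handling roots $\beta$ with $2\beta$ also a root.

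\textbf{Step 2: from subexponential growth to an invariant metric.} Lattices in higher rank $p$-adic groups have strong Banach property (T); here we invoke Lafforgue's and Liao's results for non-archimedean groups and their products. Following Brown--Fisher--Hurtado, we apply strong (T) to the representation on $L^2$-sections (or Sobolev-type spaces) of the bundle of inner products on $TM$. Subexponential growth from Step 1 makes the twisted operators have controlled norm growth, so averaging produces a $\Gamma$-invariant continuous Riemannian metric. Then $\alpha(\Gamma)$ lies in a compact group $K=\mathrm{Isom}(M,g)$, a compact Lie group by Myers--Steenrod, with $\dim K$ bounded in terms of $\dim M$.

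\textbf{Step 3: finiteness.} Consider the closure of $\alpha(\Gamma)$ in $K$. Its identity component is a compact Lie group $K^0$, and $\Gamma$ has a finite-index subgroup mapping densely into $K^0$. Any compact connected Lie group contains linear representations over $\mathbb{R}$ with infinite image, so $\Gamma$ would acquire an infinite, relatively compact real linear image. By Margulis superrigidity for $S$-arithmetic lattices with $S$ containing no archimedean place, every homomorphism to a real (or real compact) Lie group has finite image, hence $K^0$ is trivial. Alternatively, the image of $\Gamma$ is finitely generated linear with property (T), and superrigidity implies it is finite. Thus $\alpha(\Gamma)$ is finite.
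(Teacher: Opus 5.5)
\paragraph{The main gap: Step 3.} Your Step 3 rests on a false claim. Margulis superrigidity does not say that homomorphisms from a lattice in a totally disconnected $S$-arithmetic group into compact Lie groups have finite image. It controls unbounded images; bounded images are exactly where the archimedean part of the arithmetic structure shows up.

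In fact the opposite always happens here. By arithmeticity, a finite-index subgroup of $\Gamma$ maps with infinite image into the compact group $G_\infty$ of archimedean points of the defining $\mathbb{K}$-group. Concretely (as in the paper's example), the $\mathbb{Z}[1/p]$-points of the special unitary group of a definite hermitian form are a cocompact lattice in $\operatorname{SL}_n(\mathbb{Q}_p)$. They are also an infinite subgroup of $\operatorname{SU}_n(\mathbb{R})$, so they act infinitely and isometrically on $\mathbb{CP}^{n-1}$. That action has uniform subexponential growth and lies in a compact Lie group, so your Steps 2--3 would ``prove'' it finite. The same example refutes your alternative argument via ``finitely generated linear with property (T)''.

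The telltale sign is that your Step 3 never uses the hypothesis on $\dim M$; it must be used a second time. What is needed:
\begin{enumerate}
\item Property (T) rules out torus factors of $K=\overline{\alpha(\Gamma)}^0$.
\item Apply superrigidity to a Galois conjugate of the representation that becomes unbounded at some place (possible because some eigenvalue is not a root of unity). This shows every simple factor of $K$ is isogenous to the compact real form $\mathbb{G}_\infty(\mathbb{R})$. Its rank is the absolute rank, hence at least $r$.
\item A nontrivial orbit $K/C\subset M$ then has dimension at least the minimal codimension $d(\mathbb{G}_\infty(\mathbb{R}))$ of a proper subalgebra of the compact form.
\item Bredon's bound $\dim(\mathfrak{h}_c/\mathfrak{k})\ge\operatorname{rank}\mathfrak{h}_c+\operatorname{rank}\mathfrak{k}$ gives $d(\mathbb{G}_\infty(\mathbb{R}))>r\ge\dim M$, a contradiction.
\end{enumerate}
Relatedly, Myers--Steenrod does not apply directly to a merely continuous invariant metric. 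That the closure is a Lie group comes from the Hilbert--Smith theorem for Lipschitz/H\"older actions, as in Brown--Fisher--Hurtado's $C^1$ paper.

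\paragraph{Step 1 conflates two mechanisms.} Averaging over unipotent subgroups on $M^\alpha$ is only controlled after projecting to $G/\Gamma$, since there is no Ratner equidistribution upstairs. Moreover, to keep a positive exponent for an element $a$ you must average over subgroups centralising $a$, not over roots ``not proportional to the exponents''. This yields an $A$-invariant, $A$-ergodic measure with a positive exponent and Haar projection. It gives no invariance of the measure on $M^\alpha$ under any root group.

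That invariance is a separate step, and it is where the dimension hypothesis first enters. After suspending the $\mathbb{Z}^r$-action to an $\mathbb{R}^r$-action, the at most $\dim M$ fiberwise Lyapunov functionals (summing to zero in the volume case) have a common nonzero kernel vector $v$. The Avila--Viana invariance principle, applied along a partition subordinate to $G_v^-$ and then to $G_v^+$, gives invariance under $\langle G_v^-,G_v^+\rangle$, which is all of $\mathbb{G}_{p_0}(\mathbb{Q}_{p_0})$.

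Finally, $G$-invariance only makes the \emph{set} of exponents Weyl-invariant, not each exponent. To reach a contradiction you need a counting argument: a nonzero Weyl orbit spans $\mathfrak{a}^*$, so it has at least $r$ elements, which clashes with the dimension hypothesis. Alternatively, use cocycle superrigidity as in the paper.
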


In Section~\ref{sect:proof_sketch} we use Margulis' Normal Subgroup Theorem to show that Theorem~\ref{theo:main_theorem} is equivalent to the following special case concerning simple groups.

\renewcommand{\thetheo}{\Alph{theo}$'$}
\addtocounter{theo}{-1}

\begin{theo}
    \label{theo:simple_p_adic_case}
    Let $k$ be a non-Archimedean local field of characteristic 0 and $\mathbb{G}$ a connected simple algebraic group defined over $k$. Consider $G = \mathbb{G}(k)$ and a lattice $\Gamma\leq G$. Let $M$ be a compact real manifold and let $\alpha:\Gamma \to \Diff^1(M)$ be a homomorphism. 
    Assume that $r = \operatorname{rank}_k(\mathbb{G}) \geq 2$. Suppose that either
    \begin{itemize}
        \item $\dim M < r$, or
        \item $\dim M = r$ and $\alpha(\Gamma)$ preserves a $C^1$-volume form on $M$. 
    \end{itemize}
    Then $\alpha(\Gamma)$ is finite.
\end{theo}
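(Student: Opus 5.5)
We follow the Brown–Fisher–Hurtado strategy, adapted to the non-Archimedean setting. The plan has three steps: show that $\alpha$ has uniform subexponential growth of derivatives, upgrade this to a local-to-global finiteness statement, and finally use rigidity to conclude that $\alpha(\Gamma)$ is finite.

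\textbf{Step 1: the suspension and the fiberwise Lyapunov exponents.} Form the suspension space $M^\alpha = (G\times M)/\Gamma$, a fiber bundle over $G/\Gamma$ with fiber $M$ on which $G$ acts. Fix a maximal $k$-split torus $\mathbb{A}$, of dimension $r$. Since $k$ is non-Archimedean, we pass to the lattice $A\cong\mathbb{Z}^r$ of the torus $\mathbb{A}(k)$, generated by elements acting by powers of a uniformizer, modulo the compact part. For each $A$-invariant ergodic probability measure $\mu$ on $M^\alpha$ projecting to Haar measure on $G/\Gamma$, the fiberwise derivative cocycle is log-integrable. This uses the integrability of the return cocycle, which holds for lattices in $p$-adic groups and in fact is easier than in the real case because cocompactness is common. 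By Oseledec, the cocycle has at most $\dim M$ fiberwise Lyapunov functionals $\lambda_i\colon A\to\mathbb{R}$, which extend linearly to $A\otimes\mathbb{R}\cong\mathbb{R}^r$. Each nonzero $\lambda_i$ has a kernel hyperplane. Because $\dim M<r$, there are fewer than $r$ such functionals. Choosing a chamber-adapted element, the key invariance principle then applies: if some exponent is nonzero, the conditional measures along a coarse restricted root group $U^{[\beta]}$ transverse to all nonzero exponents are invariant. To obtain this we use the entropy-product structure (Ledrappier–Young / Brown–Rodriguez Hertz–Wang style) for $\mathbb{Z}^r$-actions. That invariance lets us average $\mu$ to a measure invariant under the group generated by several root groups, and hence, by Ratner-type measure rigidity for unipotent flows in $p$-adic groups (Ratner, Margulis–Tomanov), $G$-invariant. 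A $G$-invariant measure has zero fiberwise exponents for all of $A$ by Zimmer's cocycle superrigidity. In the $p$-adic setting superrigidity gives a continuous representation $G\to GL_{\dim M}(\mathbb{R})$, which must have compact hence bounded image since $G$ is totally disconnected and simple. This contradiction shows all exponents vanish for all such measures. In the volume-preserving case with $\dim M=r$, the sum of the exponents is zero, which leaves at most $r-1$ independent functionals, and the same argument applies.

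\textbf{Step 2: uniform subexponential growth and strong property (T).} Vanishing of all exponents for every relevant measure, together with compactness of the space of such measures, gives uniform subexponential growth of $\|D\alpha(a)\|$ along $A$. Using the Cartan decomposition $G=KAK$ and Lubotzky–Mozes–Raghunathan (word length in $\Gamma$ comparable to distance in $G$; Lubotzky–Mozes–Raghunathan covers the higher-rank $p$-adic case), this transfers to subexponential growth of derivatives for $\alpha(\Gamma)$ in word length. We then invoke strong property (T) in the sense of V.~Lafforgue, known for higher-rank $p$-adic groups and their lattices (de la Salle, Lafforgue), acting on Sobolev-type spaces of sections of $TM$ and of Riemannian metrics. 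Averaging produces an $\alpha(\Gamma)$-invariant continuous Riemannian metric, so $\alpha(\Gamma)$ lies in a compact Lie group $\mathrm{Isom}(M,g)$.

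\textbf{Step 3: finiteness.} The closure of $\alpha(\Gamma)$ is now a compact Lie group, and by Margulis superrigidity for $p$-adic lattices any homomorphism of $\Gamma$ into a compact real Lie group has finite image. The reason is that the identity component would otherwise yield a real representation of $\Gamma$ with non-precompact Zariski-dense image, whereas superrigidity forces it to extend continuously from the totally disconnected group $G$ and hence to have finite image. Therefore $\alpha(\Gamma)$ is finite.

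The main obstacle is Step 1, specifically adapting the invariance principle and the averaging to a $\mathbb{Z}^r$-action inside a totally disconnected group. The root groups are not one-parameter flows, so the averaging must be carried out over compact open subgroups and Følner sets in $U^{[\beta]}$. We must also check that the measure rigidity theorem used applies to the resulting $p$-adic unipotent-invariant measures.
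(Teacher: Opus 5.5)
\textbf{Step 3 fails as stated.} It is false that every homomorphism from a lattice in a higher-rank $p$-adic group into a compact Lie group has finite image. The paper's own example is a counterexample. Take the unitary group $\mathbb{G}=\mathrm{SU}(h)$ over $\mathbb{Q}(\sqrt{-m})$, with $-m$ a square in $\mathbb{Q}_p$, and set $\Gamma=\mathbb{G}(\mathbb{Z}_{S'})$ with $S'=\{\infty,p\}$. Then $\Gamma$ is a uniform lattice in $\mathrm{SL}_n(\mathbb{Q}_p)$ (rank $n-1\ge 2$ for $n\ge 3$), yet its image in $\mathrm{SU}_n(\mathbb{R})$ is infinite, indeed dense. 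So it acts infinitely, even isometrically, on $\mathbb{C}\mathbb{P}^{n-1}$, a manifold of dimension $2r$.

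Your reasoning silently assumes that the unbounded Galois conjugate of a compact-valued representation is archimedean. Here it sits at the $p$-adic place, where superrigidity simply extends it to $G$, so no contradiction arises. What superrigidity applied to Galois conjugates (with Tits' lemma and arithmeticity) actually gives is weaker: every simple factor of $K=\overline{\alpha(\Gamma)}$ has the type of the compact real form $\mathbb{G}_\infty(\mathbb{R})$ attached to $\Gamma$. Finiteness then needs the dimension hypothesis a second time. A nontrivial compact $K$ acting on $M$ has an orbit $K/C$ with $C$ a proper closed subgroup, so $\dim M\ge d(\mathbb{G}_\infty(\mathbb{R}))>r$ by Bredon's estimate (Proposition~\ref{prop:d_vs_rank}). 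This contradicts $\dim M\le r$.

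\textbf{Step 1 does not reach $C^1$, and it skips a reduction.} The entropy product structure you invoke (Ledrappier--Young, Brown--Rodriguez Hertz--Wang) rests on Pesin theory and needs $C^{1+\text{H\"older}}$ regularity. Moreover, $M^\alpha$ is here only a bundle over the totally disconnected space $G/\Gamma$, not a manifold. So your measure-rigidity step covers at best $C^2$ actions, whereas the statement is about $C^1$ actions.

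The paper instead uses the Avila--Viana invariance principle (Theorem~\ref{theo:Avila-Viana}), which needs only $\log\|F_x^{-1}\|_{C^1}\in L^1$. The dimension bound enters exactly there, in three moves:
\begin{enumerate}
\item Suspend the $\mathbb{Z}^r$-action to an $\mathbb{R}^r$-action. There are at most $\dim M<r$ Lyapunov functionals, or they sum to zero in the volume-preserving case $\dim M=r$, so they have a common kernel vector $v\neq 0$.
\item Apply a measurable conjugacy that makes the fiber maps measurable for a partition subordinate to $G_v^-$.
\item The invariance principle then gives $G_v^{\pm}$-invariance, hence $G$-invariance.
\end{enumerate}

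You are also missing the reduction to Haar-projecting measures. The measures produced by the failure of subexponential growth need not project to $m_{G/\Gamma}$, so compactness of the set of Haar-projecting measures does not help. One must average over unipotent subgroups that centralize an element with positive exponent, so that upper semicontinuity keeps $\lambda_+>0$. This uses Ratner for one-parameter unipotents together with the height filtration of a Borel subgroup to handle higher-dimensional unipotent groups.

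Finally, in $C^1$ the strong (T) step does not yield a continuous invariant Riemannian metric. It only places $\alpha(\Gamma)$ inside a compact Lie group of homeomorphisms, via Hilbert--Smith for H\"older actions.
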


\renewcommand{\thetheo}{\Alph{theo}}

In a similar vein, in the presence of a non-compact real factor we can state a related result, which we have not seen recorded in the literature. This follows by utilising a similar argument to restrict to the real place, and replacing our Theorem~\ref{theo:simple_p_adic_case} with \cite[Theorem 1]{brown_c1_2022}. 

\begin{theo}
\label{theo:with_real_place}
    Let $S \subset \mathcal{V}$ be a finite set containing the Archimedean place, and for each $p\in S$ let $\mathbb{G}_p$ be a connected simple algebraic group defined over $\Qp$. Consider $G = \prod_{p\in S} \mathbb{G}_p(\mathbb{Q}_p)$ and a uniform irreducible lattice $\Gamma \leq G$. Let $M$ be a compact real manifold and let $\alpha:\Gamma \to \Diff^1(M)$ be a homomorphism. Let $r = \operatorname{rank}_\R(\mathbb{G}_\infty)$, and assume that $r \geq 2$. Suppose that either
    \begin{itemize}
        \item $\dim M < r$, or
        \item $\dim M = r$ and $\alpha(\Gamma)$ preserves a $C^1$-volume form on $M$. 
    \end{itemize}
    Then $\alpha(\Gamma)$ is finite.
\end{theo}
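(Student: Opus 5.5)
The idea is to reduce Theorem~\ref{theo:with_real_place} to the purely real statement \cite[Theorem 1]{brown_c1_2022}, in the same way Theorem~\ref{theo:main_theorem} is reduced to Theorem~\ref{theo:simple_p_adic_case}. The reduction has two parts: projecting $\Gamma$ to the real factor, and handling the kernel of $\alpha$ via Margulis' Normal Subgroup Theorem.

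\textbf{Step 1: the kernel.} We may assume each $\mathbb{G}_p$ is isotropic. Compact factors can be discarded: projecting away from a compact factor sends a uniform irreducible lattice to a uniform irreducible lattice of the remaining product, with finite kernel.

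Since $r\geq 2$, the total $S$-rank $\sum_{p\in S}\operatorname{rank}_{\Qp}(\mathbb{G}_p)$ is at least $2$. Margulis' Normal Subgroup Theorem therefore applies to the irreducible lattice $\Gamma$. So $\ker\alpha$ is either finite and central, or of finite index. In the second case $\alpha(\Gamma)$ is finite and we are done. Hence we assume $\ker\alpha$ is finite.

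\textbf{Step 2: producing a real lattice action.} The obstacle is that $\Gamma$ itself is not a lattice in $\mathbb{G}_\infty(\R)$, since its projection there is dense by irreducibility whenever $S\neq\{\infty\}$. To get around this, we use the structure of $S$-arithmetic groups. By Margulis arithmeticity, $\Gamma$ is commensurable with $\mathbf{H}(\mathbb{Z}[1/N])$ for some $\mathbb{Q}$-group $\mathbf{H}$, where $N$ is the product of the finite primes in $S$. Consequently $\Gamma$ contains a finite-index subgroup $\Lambda$ commensurable with $\mathbf{H}(\mathbb{Z})$. This $\Lambda$ is exactly $\Gamma\cap \big(\mathbb{G}_\infty(\R)\times\prod_{p\neq\infty}K_p\big)$ for suitable compact open subgroups $K_p$, and it projects to a lattice in $\mathbb{G}_\infty(\R)$.

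Uniformity of $\Gamma$ gives that $\mathbf{H}$ is $\mathbb{Q}$-anisotropic, so $\Lambda$ is a uniform lattice in $\mathbb{G}_\infty(\R)$. The projection of $\Lambda$ to $\mathbb{G}_\infty(\R)$ is injective: $\Gamma$ is discrete in $G$, and $\prod_{p\neq\infty}K_p$ is compact, so the kernel is finite and can be removed by passing to a torsion-free finite-index subgroup.

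We then restrict $\alpha$ to $\Lambda$ and view it as an action of a cocompact lattice in the higher-rank simple real group $\mathbb{G}_\infty(\R)$, which has real rank $r$. The dimension hypotheses are unchanged, and a volume form preserved by $\alpha(\Gamma)$ is in particular preserved by $\alpha(\Lambda)$. So \cite[Theorem 1]{brown_c1_2022} gives that $\alpha(\Lambda)$ is finite.

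\textbf{Step 3: conclusion.} Since $[\Gamma:\Lambda]<\infty$ and $\alpha(\Lambda)$ is finite, $\alpha(\Gamma)$ is finite.

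The main obstacle is Step 2: justifying the arithmetic structure carefully, including commensurability, the choice of the compact opens $K_p$, and the correct form of $\mathbf{H}$ when $\mathbb{G}_\infty$ is defined via restriction of scalars. It also requires confirming that the hypotheses of \cite[Theorem 1]{brown_c1_2022} (cocompact lattice, $C^1$ action, volume form in the critical dimension) match exactly.
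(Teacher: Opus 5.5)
Your strategy is the paper's. You restrict $\alpha$ to $\Lambda=\Gamma\cap\bigl(\mathbb{G}_\infty(\R)\times\prod_{p\neq\infty}K_p\bigr)$, which modulo a finite kernel is a uniform lattice in $\mathbb{G}_\infty(\R)$. You then apply \cite[Theorem 1]{brown_c1_2022} and finish with Margulis' Normal Subgroup Theorem.

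However, Step 3 rests on a false claim. $\Lambda$ does \emph{not} have finite index in $\Gamma$ as soon as some finite-place factor is non-compact; think of $\SL_n(\Z)$ inside $\SL_n(\Z[1/p])$. Write $G_f=\prod_{p\neq\infty}\mathbb{G}_p(\Qp)$ and $K_f=\prod_{p\neq\infty}K_p$. The map $\gamma\Lambda\mapsto\pi_f(\gamma)K_f$ is then a bijection $\Gamma/\Lambda\to G_f/K_f$:
\begin{itemize}
\item it is injective by the definition of $\Lambda$;
\item it is surjective because $\pi_f(\Gamma)$ is dense in $G_f$ by irreducibility and $K_f$ is open.
\end{itemize}
So $[\Gamma:\Lambda]=\infty$.

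Your own Step 2 remark already rules the claim out. A finite-index subgroup of $\Gamma$ has projection to $\mathbb{G}_\infty(\R)$ whose closure is an open subgroup, so it can never be discrete there. The same error sits in ``$\Gamma$ contains a finite-index subgroup commensurable with $\mathbf{H}(\Z)$'': $\mathbf{H}(\Z)$ has infinite index in $\mathbf{H}(\Z[1/N])$.

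The repair is already in your Step 1, which you set up but never use. $\Lambda$ is infinite, being a lattice in the non-compact group $\mathbb{G}_\infty(\R)$, and $\alpha(\Lambda)$ is finite. Hence $\ker\alpha$ contains the infinite group $\ker(\alpha|_\Lambda)$. By the Normal Subgroup Theorem, $\ker\alpha$ then has finite index in $\Gamma$, so $\alpha(\Gamma)$ is finite. This is exactly how the paper concludes: it shows that $\alpha$ has infinite kernel.

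Arithmeticity is also unnecessary, so the obstacle you flag in Step 2 disappears. $\mathbb{G}_\infty(\R)\times K_f$ is an open subgroup of $G$, so $\Gamma\cap(\mathbb{G}_\infty(\R)\times K_f)$ is a uniform lattice in it. Projecting along the compact factor $K_f$ then gives a uniform lattice in $\mathbb{G}_\infty(\R)$ with finite kernel. This is the paper's route, and it avoids the restriction-of-scalars bookkeeping entirely.
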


We defer further discussion of the critical dimensions for later in the introduction. Our method of proof follows closely the aforementioned work of Brown-Fisher-Hurtado \cite{brown_zimmers_2020}, as well as their subsequent works \cite{BrownFisherHurtado2020, BrownFisherHurtado2021preprint}. Some key steps in the proof are heavily inspired by \cite{brown_c1_2022}, allowing for the $C^1$-regularity.

\subsection{\normalfont{\textit{Brief history}}}
In a series of papers in the 1980's, most notably \cite{Zimmer1984Arithmetic, Zimmer1986Actions}, Zimmer stated a series of conjectures regarding the classification of actions of higher rank semisimple Lie groups on compact manifolds, including the $S$-arithmetic setting. 

\myindent Significant progress has been made in the case of lattices in higher rank real simple groups (see for example the surveys by Fisher \cite{Fisher2019GroupsActingOnManifoldsZimmer, fisher_recent_2019} and Brown \cite{brown_entropy_2019}), including the full resolution of Zimmer's conjectures for some split simple real Lie groups \cite{brown_zimmers_2020, BrownFisherHurtado2020, BrownFisherHurtado2021preprint}. Considerably less attention has been given to groups over other local fields; although $p$-adic analysis has at times been used to great effect, see for example \cite{CantatXie2018}. We highlight some motivations for studying these other fields, and some notable prior results.

\myindent Classical motivation arises from the \emph{Hilbert-Smith conjecture}, which asserts that the only locally compact groups that can act continuously and faithfully on a manifold $M$ are Lie groups \cite{Smith1941}. It was shown in \cite{Newman1931, Smith1941Newman} that the conjecture is equivalent to showing that no continuous and faithful action exists for the group of $p$-adic integers $\Z_p$. This holds for actions by diffeomorphisms \cite{BochnerMontgomery1946}, and actions by H\"{o}lder homeomorphisms with sufficiently large exponent \cite{RepovsScepin1997,Maleshich1997}. The full Hilbert-Smith conjecture for actions by homeomorphisms is known for manifolds of dimensions 1 and 2 (see \cite[pg. 223, 249]{MontgomeryZippin1955}), and dimension 3 \cite{Pardon2013HilbertSmith}. A negative answer to the Hilbert-Smith conjecture could potentially be used to construct exotic actions of lattices in high rank $p$-adic groups.

\myindent For higher rank lattices (over any local field of characteristic 0), fundamental results in the more modern study of actions on manifolds include Margulis' superrigidity and Zimmer's cocycle superrigidity, which in many cases give a complete classification of finite dimensional representations and linear cocycles; see \cite{Zimmer1984, Margulis1991}. Both of these results play key roles in the sequel.

\myindent When the manifold being acted on is of dimension 1, so it is the circle or the real line, the fact that actions must be finite is known in many cases, see \cite{Witte1994ArithmeticGroups1Manifolds, Ghys1999Actions, BurgerMonod2002, Navas2002KazhdanCircle} and the surveys \cite{navas2011groups, morris_can_2009}. We mention that recently Deroin and Hurtado \cite{deroin_non_2020} proved in full generality that actions of lattices in higher rank simple real groups by homeomorphisms on the circle are finite; this was extended by Witte-Morris to lattices in higher rank simple $p$-adic groups \cite{morris_non-left-orderability_2024}. 

\myindent For dimension 2 already much less was known; and the only results that apply to $p$-adic lattices are those using only Kazhdan's property (T). For example Zimmer showed in \cite{zimmer_kazhdan_1984} that if a $C^1$-action of a property (T) group on a compact surface has a finite orbit, then in fact the action is finite. 

\myindent Similarly for higher dimensions we are unaware of results that hold specifically for $p$-adic or $S$-arithmetic groups rather than general property (T) groups. See for example \cite{zimmer_spectrum_1991, BekkaDeLaHarpeValette2008hb} for many results about rigidity of actions on manifolds for property (T) groups.

\myindent A key step in the proof of Theorem~\ref{theo:simple_p_adic_case} is a type of measure rigidity statement, Theorem~\ref{theo:G_invariance}. There is a rich history of results in this vein, especially in the homogeneous setting. We mention in particular the survey \cite{Pisa} and the references therein.

\subsection{\normalfont{\textit{Dimension numerology}}} 
We introduce some of the dimensions that arise in studying actions of lattices on manifolds, and state some natural conjectures. We will write $\Diff^\bullet(M)$ to indicate when we believe these results should hold for any regularity, but already the case of smooth (or even real-analytic) diffeomorphisms would be interesting. 

\myindent Let $S\subset \mathcal{V}$ be a finite set including $\infty$, and  $G = \prod_{p\in S}\mathbb{G}_p(\mathbb{Q}_p)$ an $S$-arithmetic group where each $\mathbb{G}_p$ is a connected simple algebraic group defined over $\mathbb{Q}_p$. A corollary of Margulis superrigidity and subsequent works is that the real representation theory of any irreducible lattice $\Gamma \leq G$ is controlled, up to bounded error, by that of $\mathbb{G}_\infty(\R)$ (see for example \cite{Zimmer1986Actions} and the references therein). Furthermore, letting $\mathfrak{g}$ be the Lie algebra of $\mathbb{G}_\infty(\R)$, the bounded representations are controlled by the representations of the compact real form of $\mathfrak{g}^\C$ (we recall this fact and its proof in Section~\ref{sect:compact_codomain}). 

\myindent It makes sense therefore to assume that the \emph{infinite} dimensional representations of $\Gamma$ are similarly constrained, indeed Kazhdan's property (T) can be viewed as a first statement in this direction. Given that homomorphisms $\alpha:\mathbb{G}_\infty(\R) \to \Diff^\bullet(M)$ are somewhat well understood (see for example \cite{stuck_low_1991}), it can be hoped that certain results can be carried over to the lattice $\Gamma$.  While this is not always true (see some examples in \cite{Fisher2019GroupsActingOnManifoldsZimmer}), this is the general philosophy of the Zimmer programme. 

\myindent There are two natural quantities to consider. For a real semisimple Lie group $H$ with Lie algebra $\mathfrak{h}$, we denote by $d(H)$ the minimal codimension of a proper subalgebra of the compact real form of $\mathfrak{h}_\C$. We remark that for a simple real Lie group $H$ of rank $\geq 2$, we have that $d(H) > \operatorname{rank}_\R(H)$ (see Proposition~\ref{prop:d_vs_rank}). Similarly, for a real semisimple Lie group $H$ define $\nu(H)$ to be the minimal codimension of a proper parabolic subalgebra of $\mathfrak{h}$. For a general $S$-arithmetic group $G = \prod_{p\in S}\mathbb{G}_p(\mathbb{Q}_p)$ where $\infty\in S$ we set $d(G):=d(\mathbb{G}_\infty(\R))$ and $v(G):= v(\mathbb{G}_\infty(\R))$, where $\mathbb{G}_\infty$ denotes the real factor of $G$ (see Section~\ref{sect:S-arithmetic} for the definition in the case that $\infty \notin S$).

In the spirit of the Zimmer programme, we record the following conjecture. We are unaware if it appears in print, although it seems to be the natural extension of the standard conjecture to the $S$-arithmetic context.

\begin{conjecture}
\label{conj:S_Arithmetic}
    Let $\Gamma \leq G$ be as above, and $M$ a compact real manifold. Suppose that $\operatorname{rank}_{\Qp}(\mathbb{G}_p) \neq 1$ for every $p$. Let $\alpha:\Gamma \to \Diff^\bullet(M)$ be a group homomorphism. Then
    \begin{enumerate}
        \item If $\dim(M) < \nu(G)$ then $\alpha(\Gamma)$ preserves a Riemannian metric;
        \item If $\dim(M) < \min\{\nu(G),d(G)\}$ then $\alpha(\Gamma)$ is finite.
    \end{enumerate}
\end{conjecture}

This is of course vacuous in the case that $\operatorname{rank}_{\Qp}(\mathbb{G}_p) =0$ for all $p\in S$. There are similar conjectures in the case of volume preserving actions to be made, see for example the discussion in \cite{Cantat2017Bourbaki1136}. See also \cite[Tables 1,2, and 3]{Cantat2017Bourbaki1136} for the values of $v(G)$ and $d(G)$ for all simple real Lie groups.

\myindent We isolate in particular the case that $\mathbb{G}_\infty(\R)$ is compact, expecting strong conclusions to hold in this case. 

\begin{que}
\label{que:compact_real}
    Let $\Gamma \leq G$ be as above, and suppose that $\operatorname{rank}_\R(\mathbb{G}_\infty)=0$. Is there a compact real manifold $M$ of dimension $\dim M < d(G)$ admitting an infinite action of $\Gamma$ by $C^1$-diffeomorphisms? Is there a compact real manifold $M$ of \textbf{any} dimension admitting a non-isometric action of $\Gamma$ by $C^1$-diffeomorphisms? 
\end{que}

We suspect that this question has a negative answer even in the case that all factors of $G$ have rank 1, although this seems currently out of reach. Recall that in Theorem~\ref{theo:main_theorem} we require at least one factor to be of higher rank to achieve any non-trivial dimension bound. We give two different ways to view actions of such $\Gamma$ on a manifold of dimension $d(G)$ in Section~\ref{sect:S-arithmetic}.

\myindent While resolving Question~\ref{que:compact_real} fully seems currently out of reach, one might hope that under an additional assumption on the $S$-rank we might be able to say more. Recall that the $S$-rank is defined to be $\operatorname{rank}_S(G) := \sum_{p\in S}\operatorname{rank}_{\mathbb{Q}_p}(\mathbb{G}_p)$. We record one such conjecture, which seems more attainable with current tools.  

\begin{conjecture}
    Let $\Gamma \leq G$ be as above, and $M$ a compact real manifold. Suppose that $\operatorname{rank}_{\Qp}(\mathbb{G}_p) \neq 1$ for every $p$ and that $\operatorname{rank}_\R(\mathbb{G}_\infty)=0$. Let $\alpha:\Gamma \to \Diff^\bullet(M)$ be a group homomorphism. If $\dim M < \operatorname{rank}_S(G)$, then $\alpha(\Gamma)$ preserves a Riemannian metric on $M$.
\end{conjecture}

A striking feature of Theorem~\ref{theo:main_theorem} is that it depends only on the \emph{maximal} rank, and in particular it holds even in the case where some factors have rank 1. We expect a similar result to be true for real semisimple groups also, the following conjecture was suggested to the author by Simon Machado.

\begin{conjecture}
\label{conj:max_rank}
    Let $G$ be a real semisimple group, and $\Gamma \leq G$ a uniform irreducible lattice. Let $r = \max_{i}\{\operatorname{rank}_{\R}(G_i)\}$ where the $G_i$ are the simple factors of $G$, and assume that $r \geq 2$. Suppose that either
    \begin{itemize}
        \item $\dim M < r$, or
        \item $\dim M = r$ and $\alpha(\Gamma)$ preserves a $C^1$-volume form on $M$. 
    \end{itemize}
    Then $\alpha(\Gamma)$ is finite.
\end{conjecture}

\subsection{\normalfont{\textit{Groups of $\mathbb{Q}$-points}}} 

A common motivation to consider $S$-arithmetic groups is that they allow us to say something concrete about the $\mathbb{Q}$-points of algebraic groups. In the setting of actions on manifolds we are unaware of any results in this vein other than \cite[Theorem 28]{BurgerMonod2002}, from which it can be shown that if the group of $\mathbb{Q}$-points of a higher rank simple $\mathbb{Q}$-group acts on a circle, the action must be trivial. 

\myindent We record the following consequence of Theorem~\ref{theo:simple_p_adic_case}, which is deduced along similar lines to Theorem~\ref{theo:main_theorem}. Recall that an (almost) simple group $\mathbb{G}$ defined over a field $k$ is said to be \emph{absolutely (almost) simple} if it remains simple under the algebraic closure $\overline{k}$.

\begin{theo}
\label{theo:Q_groups}
    Let $S \subset \mathcal{V}$ be an infinite set of places of $\mathbb{Q}$ including the Archimedean one. Let $\mathbb{G}$ be a connected algebraic group defined over $\mathbb{Q}$ that is absolutely simple. Let $\Gamma = \mathbb{G}(\Z_S)$, where $\Z_S := \Z[\frac{1}{p}:p\in S\backslash \{\infty\}]$.
    Let $M$ be a compact real manifold and let $\alpha:\Gamma \to \Diff^1(M)$ be a homomorphism. 
    
    If $\operatorname{rank}_\R(\mathbb{G}) = 0$, let $r= \operatorname{rank}_\C(\mathbb{G})$, otherwise let $r = \operatorname{rank}_\R(\mathbb{G})$. Assume that $r \geq 2$ and that $\operatorname{rank}_{\mathbb{Q}}(\mathbb{G}) = 0$ in both cases. Suppose that either
    \begin{itemize}
        \item $\dim M < r$, or
        \item $\dim M = r$ and $\alpha(\Gamma)$ preserves a $C^1$-volume form on $M$. 
    \end{itemize}
    Then $\alpha(\Gamma)$ is finite.
\end{theo}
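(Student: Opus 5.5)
We reduce to Theorem~\ref{theo:simple_p_adic_case} (or \cite[Theorem 1]{brown_c1_2022}) by finding a single place $p$ where $\mathbb{G}$ has $\Qp$-rank equal to $r$ and a lattice of $\mathbb{G}(\Qp)$, or of a product containing it, sitting inside $\Gamma$. The natural candidate is a finite subset $S_0 \subset S$ containing $\infty$, with $\Gamma_0 = \mathbb{G}(\Z_{S_0}) \leq \Gamma$. By Borel--Harish-Chandra, $\Gamma_0$ is an irreducible lattice in $G_{S_0} = \prod_{p\in S_0}\mathbb{G}(\Qp)$. Since $\operatorname{rank}_{\mathbb{Q}}(\mathbb{G}) = 0$, it is cocompact (Godement's criterion), so it is a uniform lattice. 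Irreducibility follows from the absolute simplicity of $\mathbb{G}$ together with strong approximation.

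The choice of places splits into two cases.
\begin{itemize}
\item If $\operatorname{rank}_\R(\mathbb{G}) = r \geq 2$, take $S_0 = \{\infty\}$ or any finite subset containing $\infty$. Theorem~\ref{theo:with_real_place}, applied to $\Gamma_0$, gives that $\alpha(\Gamma_0)$ is finite.
\item If $\operatorname{rank}_\R(\mathbb{G}) = 0$ and $r = \operatorname{rank}_\C(\mathbb{G})$, we need a prime $p \in S$ at which $\mathbb{G}$ splits over $\Qp$, so that $\operatorname{rank}_{\Qp}(\mathbb{G}) = \operatorname{rank}_{\overline{\mathbb{Q}}}(\mathbb{G}) = r$. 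The group $\mathbb{G}$ splits over a finite Galois extension $K/\mathbb{Q}$. By Chebotarev, the primes splitting completely in $K$ have positive density, and $\mathbb{G}$ is $\Qp$-split at each of them. However, $S$ is only assumed infinite, so it need not contain such a prime. We expect to have to be more careful here.
\end{itemize}

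The refinement for the second case is as follows. For all but finitely many $p$, $\mathbb{G}$ is quasi-split over $\Qp$ (Lang's theorem for the smooth reduction). The $\Qp$-rank of a quasi-split group is the rank of the Frobenius-fixed part of the cocharacter lattice of a maximal torus, which may be smaller than $r$ when Frobenius acts nontrivially on the Dynkin diagram or the torus. So with an arbitrary infinite $S$ one only gets $\operatorname{rank}_{\Qp} \geq$ the quasi-split rank. I would first check whether the paper's intended statement implicitly guarantees a split place. If it does not, I would use instead the maximum over $p\in S$ of the $\Qp$-ranks, which is at least $r$ up to this Frobenius issue. I flag this as the main obstacle: matching the stated $r$ with the rank at some place in $S$. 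For the purposes of the plan, I assume $S$ contains a prime $p$ with $\operatorname{rank}_{\Qp}\mathbb{G} = r$; for example, it suffices that $S$ contains a prime splitting completely in $K$.

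In the second case, take $S_0 = \{\infty, p\}$. Then $G_{S_0} = \mathbb{G}(\R)\times\mathbb{G}(\Qp)$ with $\mathbb{G}(\R)$ compact. Projecting $\Gamma_0$ to $\mathbb{G}(\Qp)$ has finite kernel, since it is discrete in the compact group $\mathbb{G}(\R)$. The image is a lattice in $\mathbb{G}(\Qp)$, because the quotient by a compact factor preserves finite covolume. Applying Theorem~\ref{theo:simple_p_adic_case} to $\alpha$ restricted to a finite-index torsion-free subgroup, or handling the finite kernel directly, gives that $\alpha(\Gamma_0)$ is finite.

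It remains to pass from $\Gamma_0$ to $\Gamma$. The kernel $N = \ker(\alpha|_{\Gamma_0})$ has finite index in $\Gamma_0$. For each finite $S_0 \subseteq S_1 \subset S$, the group $\Gamma_1 = \mathbb{G}(\Z_{S_1})$ is again an irreducible higher-rank lattice, so by the same argument $\alpha(\Gamma_1)$ is finite. To bound these images uniformly, use Margulis' normal subgroup theorem: $\ker(\alpha|_{\Gamma_1})$ is normal of finite index in $\Gamma_1$. Moreover $\Gamma_1$ is the finite-index normal closure of $\Gamma_0$ up to the congruence structure, so $[\Gamma_1 : \ker] \leq$ a bound depending only on $|\alpha(\Gamma_0)|$. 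Cleaner alternative: a finite subgroup of $\Diff^1(M)$ acts freely-ish near a fixed point, and finite groups acting faithfully on $M$ have bounded order by Mann--Su. Hence the increasing union $\alpha(\Gamma) = \bigcup_{S_1}\alpha(\Gamma_1)$ of finite groups of uniformly bounded order is finite.
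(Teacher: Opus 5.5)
\textbf{The gap is your last step}, the passage from $\Gamma_0$ to $\Gamma=\mathbb{G}(\mathbb{Z}_S)$.

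Knowing that $\alpha(\mathbb{G}(\mathbb{Z}_{S_1}))$ is finite for every finite $S_1$ only shows that $\alpha(\Gamma)$ is locally finite. That is not enough: rotations by $\mathbb{Q}/\mathbb{Z}$ form an infinite, locally finite group of diffeomorphisms of $S^1$.

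The uniform bound you invoke is false. The Mann--Su theorem bounds the rank $k$ of elementary abelian groups $(\mathbb{Z}/p)^k$ acting effectively on $M$. It does not bound the order of finite groups acting on $M$: $S^1$, or any manifold with a nontrivial circle action, carries faithful cyclic actions of every order. Your other suggestion, bounding $[\Gamma_1:\ker(\alpha|_{\Gamma_1})]$ in terms of $|\alpha(\Gamma_0)|$ ``via the congruence structure'', is not an argument. It would need something like the congruence subgroup property.

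The paper instead applies Margulis' Normal Subgroup Theorem directly to $\Gamma$, in its version for infinitely many places (\cite[Theorem VIII.2.6]{Margulis1991}). The kernel $\ker\alpha$ contains $\ker(\alpha|_{\Gamma_0})$, a finite-index subgroup of the infinite group $\Gamma_0$. So $\ker\alpha$ is infinite, hence not central, hence of finite index in $\Gamma$. Your two cases coincide with the paper's: $\Gamma_0=\mathbb{G}(\mathbb{Z})$ with \cite[Theorem 1]{brown_c1_2022}, resp.\ $\Gamma_0=\mathbb{G}(\mathbb{Z}[1/p])$ with Theorem~\ref{theo:simple_p_adic_case}. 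With this single step they finish the proof, and the bookkeeping over $S_1$ can be dropped.

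\textbf{On the obstacle you flag, you are right.} The paper handles it in one line, asserting that $\operatorname{rank}_{\mathbb{Q}_p}(\mathbb{G})=r$ for all but finitely many $p$, so that the infinite set $S$ must contain such a $p$.

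Your quasi-split observation proves this when the Galois $*$-action on the Dynkin diagram is trivial, since a quasi-split inner form of a split group is split. It fails otherwise. When $\operatorname{rank}_{\mathbb{R}}(\mathbb{G})=0$ and $\mathbb{G}$ is of type $A_n$ ($n\ge 2$), $D_{2k+1}$ or $E_6$, complex conjugation acts by the nontrivial opposition involution. So $\mathbb{G}$ is an outer form, and it is non-split at every prime not split in the associated imaginary quadratic field.

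For example, take $\mathbb{G}=\mathrm{SU}_3$ of the form $\sum_i x_i\bar x_i$ relative to $\mathbb{Q}(i)/\mathbb{Q}$. Then $\operatorname{rank}_{\mathbb{Q}_p}\mathbb{G}=1<2=r$ for every $p\equiv 3\pmod{4}$. Hence $S=\{\infty\}\cup\{p: p\equiv 3 \bmod 4\}$ contains no usable prime.

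So the hypothesis you add is genuinely needed, for your argument and for the paper's: some $p\in S$ with $\operatorname{rank}_{\mathbb{Q}_p}\mathbb{G}=r$, e.g.\ $p$ split completely in a splitting field of $\mathbb{G}$. Also, $\operatorname{rank}_{\mathbb{Q}_p}\mathbb{G}\le r$ always, so your remark that ``the maximum over $p\in S$ is at least $r$'' has the inequality reversed.
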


In a similar vein, we make the following conjecture.

\begin{conjecture}
    Let $\mathbb{G}$ be a connected algebraic group defined over $\mathbb{Q}$ that is absolutely simple, and let $\Gamma = \mathbb{G}(\mathbb{Q})$. Let $M$ be a compact real manifold and let $\alpha:\Gamma \to \Diff^1(M)$ be a homomorphism. Assume that $\operatorname{rank}_\R(\mathbb{G})=0$ and that $\operatorname{rank}_\C(\mathbb{G})\geq 2$.
    
    Then $\alpha(\Gamma)$ preserves a measure on $M$, and in fact preserves a Riemannian metric. If in addition the $\alpha(\Gamma)$-orbit of some point is dense in $M$, then $M = \mathbb{G}(\R)/K$ for some closed subgroup $K \leq \mathbb{G}(\R)$. 
\end{conjecture}

\subsection*{Acknowledgements} I would like to especially thank Homin Lee for introducing me to this problem and for many helpful discussions, Simon Machado for his significant help and suggestions, and Manfred Einsiedler for his patient explanations and intuition. Additionally I would like to thank Menny Aka, Aaron Brown, Marc Burger, David Fisher, Michael Glasner, and Andreas Wieser for useful discussions, comments, and explanations that were vital for this work.

    \setcounter{theo}{0}
\renewcommand{\thetheo}{\arabic{section}.\arabic{theo}}

    \section{Proof outline}
\label{sect:proof_sketch}

We explain first the reduction to Theorem~\ref{theo:simple_p_adic_case}. Recall that Margulis' Normal Subgroup Theorem \cite[Theorem VIII.2.6]{Margulis1991} says that whenever $\Gamma \leq G$ is an irreducible lattice such that $\operatorname{rank}_S(G) \geq 2$, and $N \lhd \Gamma$ is a normal subgroup, then either $N$ or $\Gamma/N$ is finite.

\begin{proof}[Proof of Theorems~\ref{theo:main_theorem} and \ref{theo:with_real_place} assuming Theorem~\ref{theo:simple_p_adic_case}]
We first prove Theorem~\ref{theo:main_theorem}, so let $S \subset \mathcal{V}\backslash \{\infty\}$ be a finite set of places of $\mathbb{Q}$. Let $p \in S$ be a place such that $\operatorname{rank}_{\mathbb{Q}_p}(\mathbb{G}_p)=r$ is maximal amongst the $p\in S$. For every $q \neq p$ let $K_{q} \leq \mathbb{G}_{q}(\mathbb{Q}_{q})$ be a compact-open subgroup.
Consider then the group
\begin{equation*}
    \Gamma' := \Gamma \cap \left(\mathbb{G}_p(\mathbb{Q}_p)\times \prod_{q \neq p} K_{q}\right).
\end{equation*}
If $\pi_p:G\to \mathbb{G}_p(\mathbb{Q}_p)$ denotes the projection map, $\pi_p(\Gamma') \leq \mathbb{G}_p(\mathbb{Q}_p)$ is a lattice. Let $1_p$ denote the identity element in $\mathbb{G}_p(\mathbb{Q}_p)$, and notice that since $\Gamma$ is discrete the group
\begin{equation*}
\Gamma \cap \left(\{1_p\}\times \prod_{q \neq p} K_{q}\right)
\end{equation*}
is finite. So we can assume by taking a finite index subgroup $\Gamma'' \leq \Gamma'$ that $\pi_p|_{\Gamma''}$ is injective, and so $\Gamma''$ itself can be identified with a lattice in $\mathbb{G}_p(\mathbb{Q}_p)$. Consider now the action of $\Gamma''$ on $M$, namely the restriction $\alpha|_{\Gamma''}:\Gamma''\to \Diff^1(M)$. By assumption on $\dim M$, Theorem~\ref{theo:simple_p_adic_case} says that $\alpha(\Gamma'')$ is finite, and so in particular $\alpha$ has infinite kernel. By Margulis' Normal Subgroup Theorem the result follows.

\myindent In Theorem~\ref{theo:with_real_place} the real factor $\mathbb{G}_\infty(\R)$ is not assumed to be compact and satisfies $\operatorname{rank}_\R(\mathbb{G}_\infty)\geq 2$. We set $p=\infty$ and repeat the above argument to obtain a uniform lattice in $\mathbb{G}_\infty(\R)$, which satisfies the assumptions of \cite[Theorem 1]{brown_c1_2022}. So again $\alpha$ has infinite kernel and the result follows similarly.
\end{proof}

\begin{rem}
    The statements of Theorems~\ref{theo:main_theorem} and \ref{theo:with_real_place} can be strengthened in a few directions. We choose not to state them in full generality for simplicity, instead being content with brief indications as follows:
    \begin{enumerate}[label = {(\roman*)}]
        \item One can replace each $\mathbb{G}_p$ with a semisimple group defined over $\mathbb{Q}_p$. In the case where $\infty \notin S$ it is clear from the reduction to Theorem~\ref{theo:simple_p_adic_case} above that one can let the critical dimension be $r = \max_{p\in S, i}\{\operatorname{rank}_{\mathbb{Q}_p}(\mathbb{G}_{p,i})\}$ where the $\mathbb{G}_{p,i}$ are the simple factors of $\mathbb{G}_p$.
        \item If $\infty\in S$, then as above one reduces to the case of a uniform lattice in $\mathbb{G}_\infty(\R)$ and considers $r = \min_{i}\{\operatorname{rank}_\R(\mathbb{G}_{\infty,i})\}$ (provided this is $\geq 2$). Conjecture~\ref{conj:max_rank} would allow us to strengthen the $\min$ to a $\max$, and thus allow for the presence of rank 1 factors.
        \item Both theorems can be stated over general global fields of characteristic 0, but this reduces to the above points by restriction of scalars.
        \item The dimension bound in Theorem~\ref{theo:with_real_place} can be increased and $\Gamma$ can be taken to be non-uniform (at the price of working with regularity $C^2$) by replacing \cite[Theorem 1]{brown_c1_2022} with the main results of \cite{brown_zimmers_2020, BrownFisherHurtado2020, BrownFisherHurtado2021preprint, an_brown_zhang_2024_zimmers_conjecture_nonsplit}. 
    \end{enumerate}

\end{rem}

The same comments also apply to the statement of Theorem~\ref{theo:Q_groups}. The argument for it is similar, we record it here for the readers' benefit. 

\begin{proof}[Proof of Theorem~\ref{theo:Q_groups} assuming Theorem~\ref{theo:simple_p_adic_case}]
    By Margulis' Normal Subgroup Theorem for infinitely many places (\cite[Theorem VIII.2.6]{Margulis1991}) it again suffices to show that the kernel of $\alpha$ is infinite. In the case that $\operatorname{rank}_\R(\mathbb{G})\geq 2$ we consider $\Gamma'=\mathbb{G}(\Z)$ and apply  \cite[Theorem 1]{brown_c1_2022} to $\alpha|_{\Gamma'}$. Otherwise, we note that for all but finitely many places we have that $\operatorname{rank}_{\Qp}(\mathbb{G}) = r$. Choose such a place $p$, consider $\Gamma' = \mathbb{G}(\Z[\tfrac{1}{p}])$ and apply Theorem~\ref{theo:simple_p_adic_case} to $\alpha|_{\Gamma'}$.
\end{proof}

We now outline the general strategy of the proof of Theorem~\ref{theo:simple_p_adic_case}. This is part of the scheme developed in \cite{brown_zimmers_2020} and the subsequent works \cite{BrownFisherHurtado2020, BrownFisherHurtado2021preprint, brown_c1_2022}, and has three main steps.

\subsection*{Step 1: Uniform subexponential growth of derivatives}

Since $\Gamma$ is uniform it is finitely generated, so we can fix a finite generating set of $\Gamma$ and for $\gamma \in \Gamma$ denote by $\ell(\gamma)$ the corresponding word length. In all that follows we assume that $M$ is equipped with a smooth Riemannian metric in the background, the particular choice will not matter. We recall the following key definition.

\begin{defn}
    We say that $\alpha$ has \emph{uniform subexponential growth of derivatives} if for every $\varepsilon > 0$ there is a constant $C_\varepsilon > 0$ such that for any $\gamma \in \Gamma$ we have
    \begin{equation*}
        \sup_{y\in M}\|D_y\alpha(\gamma)\| \leq C_\varepsilon e^{\varepsilon \ell(\gamma)}
    \end{equation*}
\end{defn}

Clearly this property is independent of the choice of the generating set of $\Gamma$ and the underlying Riemannian metric on $M$. The main result of step 1 is the following:

\begin{theo}
\label{theo:subexp_growth}
Let $M, G, \Gamma$, $r$, and $\alpha$ be as in the statement of Theorem~\ref{theo:simple_p_adic_case}. Suppose that either
    \begin{itemize}
        \item $\dim M < r$, or
        \item $\dim M = r$ and $\alpha(\Gamma)$ preserves a $C^1$-volume form on $M$.
    \end{itemize}
    Then $\alpha$ has uniform subexponential growth of derivatives.
\end{theo}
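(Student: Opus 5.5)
The plan follows the Brown--Fisher--Hurtado scheme: argue by contradiction, build a $G$-space out of the $\Gamma$-action, and show that exponential growth of derivatives is incompatible with the dimension bound.

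\emph{Step 1: set-up.} Note first that $\Gamma$ is cocompact. A lattice in a simple $p$-adic group over a field of characteristic $0$ is uniform, by Tamagawa's theorem that unipotent elements cannot lie in lattices there, so $\Gamma$ is finitely generated. Form the suspension $M^\alpha := (G\times M)/\Gamma$, with $\Gamma$ acting diagonally. This is a compact fibre bundle over $G/\Gamma$ with fibre $M$, carrying a left $G$-action. Let $F\subset TM^\alpha$ be the fibrewise tangent bundle, equipped with a continuous metric coming from a bounded measurable fundamental domain. Fix a maximal $k$-split torus $\mathbb{A}\le \mathbb{G}$ and set $A=\mathbb{A}(k)\cong (k^\times)^r$. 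Let $A_0\cong\mathbb{Z}^r$ be the lattice in $A$ generated by elements $\lambda(\varpi)$, where $\lambda$ runs over a basis of cocharacters and $\varpi$ is a uniformizer; $A_0$ is cocompact in $A$. Since word length on $\Gamma$ is comparable to $\log$ of the norm in $G$ (Lubotzky--Mozes--Raghunathan), failure of uniform subexponential growth produces $\varepsilon>0$ and $\gamma_n\in\Gamma$ with $\sup\|D\alpha(\gamma_n)\|\ge e^{\varepsilon\ell(\gamma_n)}$. Using the Cartan decomposition $G=KA^+K$ with $K$ compact open, this transfers to elements $a_n\in A_0$ with $|a_n|\to\infty$ and $\sup_{x}\log\|D_xa_n|_F\|\ge\varepsilon'|a_n|$.

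\emph{Step 2: a measure with a nonzero fibre Lyapunov exponent.} Choose $x_n$ realizing the growth, and average the Dirac masses along the $A_0$-orbit over Følner sets adapted to $a_n$, lifting to the projectivized bundle $\mathbb{P}F$ so that the derivative cocycle is tracked. A weak-$*$ limit gives an $A$-invariant probability measure $\mu$ on $M^\alpha$ (after averaging over $A/A_0$) that has a nonzero fibrewise Lyapunov exponent, as in Brown--Rodriguez Hertz--Wang and \cite{brown_c1_2022}. Here $C^1$ regularity suffices because only continuity of the cocycle is used. Since $\mu$ comes from pushing forward Haar-type averages and $G/\Gamma$ is compact, it projects to Haar measure on $G/\Gamma$.

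\emph{Step 3: contradiction from the dimension bound.} The fibre Lyapunov functionals $\lambda_i^F\colon A\to\mathbb{R}$ number at most $\dim M$. I claim $\mu$ is invariant under every root subgroup $U^\beta$ whose root $\beta$ is not positively proportional to some $\lambda_i^F$. This is the measure rigidity input, Theorem~\ref{theo:G_invariance}, which I will assume in a form that yields $G$-invariance. The argument is a $p$-adic analogue of the entropy product structure: use entropy along coarse Lyapunov foliations, Ledrappier--Young type inequalities relative to the factor map, and the fact that the Haar projection already has entropy in each root direction.

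Now count. When $\dim M<r$, the at most $r-1$ linear functionals $\lambda^F_i$ on $\mathbb{R}^r\supset A_0$ all vanish on some Weyl chamber wall direction. More precisely, one can find a root $\beta$ such that the group generated by $U^{\pm\beta}$ and the elements of the other non-resonant root groups is all of $G$. Invariance under these generating groups then gives $G$-invariance of $\mu$. When $\dim M=r$ with a volume form, the exponents sum to zero, so only $r-1$ of them are independent and the same counting applies. Given $G$-invariance, Zimmer's cocycle superrigidity applies to the derivative cocycle on $F$, which is $\dim M<$ (minimal nontrivial representation dimension) or trivial in the $p$-adic case. Since $\mathbb{G}(k)$ has no nontrivial continuous representations into $\mathrm{GL}(\dim M,\mathbb{R})$, the cocycle is cohomologous to a compact-valued one, so all fibre exponents vanish, contradicting Step 2.

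\emph{Main obstacle.} I expect the hardest step to be Step 3's measure rigidity in the $p$-adic setting. The group directions are totally disconnected while the fibres are real, so Pesin theory must be run for a $\mathbb{Z}^r$-action whose base is not a manifold. The entropy relations between root groups and fibre exponents then have to be established by hand, via conditional measures along $U^\beta$-orbits and the leafwise measure theory of Einsiedler--Lindenstrauss.
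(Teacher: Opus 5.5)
\textbf{The gap is in Step 2.} You assert that $\pi_*\mu$ is Haar because $\mu$ ``comes from pushing forward Haar-type averages and $G/\Gamma$ is compact.'' Nothing in your construction is a Haar-type average on the base. You average Dirac masses along the $A$-orbits of points $x_n$, so $\pi_*\mu$ is just some $A$-invariant probability measure on $G/\Gamma$: a weak-$*$ limit of orbit averages of $\pi(x_n)$. Compactness of $G/\Gamma$ only rules out escape of mass. It gives no equidistribution, and $A$-invariant measures on $G/\Gamma$ are far from unique; they can live on compact $A$-orbits or on closed orbits of intermediate subgroups.

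This is not a technicality. Theorem~\ref{theo:G_invariance} has $\pi_*\mu=m_{G/\Gamma}$ as a hypothesis, and the invariance-principle argument behind it genuinely needs it. The measure $\mu$ can only be $G_v^-$-invariant if $\pi_*\mu$ is. Moreover, the subordinate partition and the criterion in Proposition~\ref{prop:measurability_properties}(d) are built relative to Haar measure on the base. So as written, Step 3 has no valid input.

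\textbf{The missing idea is the Brown--Fisher--Hurtado averaging step}, Proposition~\ref{prop:average_to_haar} here: you must upgrade the projection to Haar while \emph{keeping} a positive exponent. The paper does this as follows.
\begin{enumerate}
\item Take the exponent $\lambda$ with $\lambda(a)=\lambda_+(a)>0$, and choose a basis $\Delta=\{\alpha_1,\dots,\alpha_r\}$ with $\lambda\notin\operatorname{span}\{\alpha_2,\dots,\alpha_r\}$.
\item Pick $a'$ with $\lambda(a')>0$ and $\alpha_j(a')=0$ for $j\ge 2$. Then $U_{\Delta'}$, with $\Delta'=\Delta\setminus\{\alpha_1\}$, centralises $a'$.
\item Average over $U_{\Delta'}$ root by root along the height filtration. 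This is needed because no multiparameter Shah theorem is available $p$-adically. Positivity of $\lambda_+(a')$ survives by upper semicontinuity (Lemma~\ref{prop:Top_Lyapunov_amenable_averaging}), while Ratner's theorems on $G/\Gamma$ make the projection $\langle A,U_{\Delta'}\rangle$-invariant.
\item Average a second time over $U_{\alpha_1}$ or $U_{\delta'}$, which commutes with a further element $a''$. Together with Corollary~\ref{cor:Ratner_symmetry_of_entropy}, this forces the projection to be $G$-invariant, i.e.\ Haar.
\end{enumerate}
You need this step, or a substitute, before invoking Theorem~\ref{theo:G_invariance}.

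A lesser point concerns the mechanism you sketch for Step 3. Pesin theory, entropy along coarse Lyapunov foliations and Ledrappier--Young inequalities are not available for $C^1$ diffeomorphisms. The paper instead suspends the $\mathbb{Z}^r$-action to an $\mathbb{R}^r$-action and picks $v$ in the common kernel of all fibre Lyapunov functionals, which the dimension count or the volume condition allows. It then applies the $C^1$ Avila--Viana invariance principle to get $G_v^{\pm}$-invariance, hence $G$-invariance. Since you are assuming Theorem~\ref{theo:G_invariance}, this does not break the argument, but your root-counting is not what is needed. Your final superrigidity and recurrence step is fine.
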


The proof of Theorem~\ref{theo:subexp_growth} will occupy the majority of this paper, as we now outline. We first construct the \emph{suspension space} $M^\alpha$, which is a bundle over $G/\Gamma$ with fibers $M$ and a natural $G$-action and $G$-equivariant projection $\pi:M^\alpha\to G/\Gamma$. The usual derivative cocycle
$$D:\Gamma \times M \to \GL(\dim M, \R): (\gamma,y) \mapsto D_y\alpha(\gamma)$$
extends to a cocycle
$$\mathcal{D}:G\times M^\alpha \to \GL(\dim M, \R).$$

The definitions of $D$ and hence $\mathcal{D}$ depend on a preferred choice of charts; this will play no role in the sequel. The extension to $\mathcal{D}$ depends in addition on a choice of fundamental domain for $\Gamma$ in $G$, see Section~\ref{sect:Suspension_construction} for the explicit construction.

\myindent Many of the key properties of the $\Gamma$-action on $M$ can now be related to properties of the $G$-action on $M^\alpha$. Most notably, an action of $\Gamma$ without uniform subexponential growth of derivatives will be related to the existence of an $A$-invariant measure $\mu$ on $M^\alpha$ with a nonzero top Lyapunov exponent (see Section~\ref{sect:smooth_dynamics} for the relevant definitions). Here $A$ denotes a maximal $k$-split torus of $G$. All measures will be assumed to be Borel.

\begin{prop}
\label{prop:exists_positive_lyapunov}
    Suppose $\Gamma$ fails to have uniform subexponential growth of derivatives. Then there is some $a\in A$ and an $A$-invariant, $A$-ergodic measure $\mu$ on $M^\alpha$ such that the top Lyapunov exponent is $\lambda_+(a,\mu,\mathcal{D})>0$.
\end{prop}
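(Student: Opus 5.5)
We argue by contrapositive, following the scheme of Brown--Fisher--Hurtado. Failure of uniform subexponential growth gives $\varepsilon>0$ and a sequence $\gamma_n\in\Gamma$ with $\ell(\gamma_n)\to\infty$ and points $y_n\in M$ such that
\begin{equation*}
\|D_{y_n}\alpha(\gamma_n)\|\geq e^{\varepsilon\ell(\gamma_n)}.
\end{equation*}
The first step is to transfer this to $G$. Since $\Gamma$ is a uniform lattice (cocompact) in $G$, the word metric on $\Gamma$ is quasi-isometric to the restriction of a proper left-invariant metric on $G$. Concretely, we take the metric induced by the Bruhat--Tits building, $d(g,h)=d_{\mathcal{B}}(g x_0,h x_0)$.

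Next we use the Cartan decomposition $G=KA^+K$ with $K$ compact open, which is available in the $p$-adic case via a good maximal compact subgroup. This lets us write $\gamma_n=k_na_nk_n'$, where $a_n\in A^+$ has $d(e,a_n)$ comparable to $\ell(\gamma_n)$. Because $K$ is compact and the cocycle $\mathcal{D}$ is bounded on $K\times M^\alpha$ (by the choice of a bounded fundamental domain in the construction of Section~\ref{sect:Suspension_construction}), the cocycle identity gives
\begin{equation*}
\log\|\mathcal{D}(a_n,x_n)\|\geq \varepsilon'\, d(e,a_n)
\end{equation*}
for some $\varepsilon'>0$, where $x_n\in M^\alpha$ is the image of $k_n'\cdot(e\Gamma,y_n)$. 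This comparison of $\gamma$ with $a$ up to compact error is the key point.

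The main obstacle is extracting a single direction $a$ together with an invariant measure. Since $A\cong\Z^r\times(\text{compact})$ in the $p$-adic case, the elements $a_n$ live in a lattice-like cone $A^+/A_c\cong\N^r$. We pass to a subsequence so that $a_n/|a_n|$ converges to a direction $v$ in the positive Weyl chamber of $\R^r$. The difficulty is that $a_n$ is not a power of one fixed element, so we cannot directly take Birkhoff averages along a single $a$. I would try the following.

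\begin{enumerate}
\item Decompose each $a_n$ as a product of generators $s_1,\dots,s_r$ of the lattice $A/A_c\cong\Z^r$, written as $a_n=s_1^{m_{1,n}}\cdots s_r^{m_{r,n}}$ with $m_{i,n}\geq 0$.
\item Form the empirical measures along the $A$-orbit segments $\{s_1^{j_1}\cdots s_r^{j_r}x_n : 0\le j_i\le m_{i,n}\}$. Each such measure is lifted to the projectivized bundle $\mathbb{P}(\mathcal{E})$ over $M^\alpha$, using the unit vector realizing the norm.
\item Average the values of the additive cocycle $\log\|\mathcal{D}(s_i,\cdot)v\|$ over these segments, telescoping along the box $\{0\le j_i\le m_{i,n}\}$.
\end{enumerate}

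Weak-$*$ limit points of the averaged measures on the compact space $\mathbb{P}(\mathcal{E})$ are $A$-invariant modulo the compact part, since the boundary effects are $o(1)$ in a Følner box. This is the standard Følner averaging for $\Z^r$-actions, and the limiting measure $\hat\mu$ satisfies
\begin{equation*}
\sum_i v_i\int\log\|\mathcal{D}(s_i,\cdot)\xi\|\,d\hat\mu\geq\varepsilon'>0.
\end{equation*}
Here continuity of the integrand requires $\mathcal{D}(s_i,\cdot)$ to be continuous, or at least bounded measurable with a $\hat\mu$-null discontinuity set. This may need a choice of continuous trivialization, and that is where I expect technical work.

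Finally, project $\hat\mu$ to an $A$-invariant measure on $M^\alpha$ and take an ergodic component where the inequality persists, using the ergodic decomposition and linearity. Then choose an element $a=s_1^{q_1}\cdots s_r^{q_r}$ with integer exponents $q_i$ approximating $v$. The subadditive ergodic theorem, together with the bound $\lambda_+(a,\mu)\geq\int\log\|\mathcal{D}(a,\cdot)\xi\|\,d\hat\mu$, yields $\lambda_+(a,\mu,\mathcal{D})>0$. A final adjustment is needed because the Lyapunov functional is linear on $A/A_c$, so positivity at the direction $v$ gives positivity at a nearby lattice point.
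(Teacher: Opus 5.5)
\textbf{The gap is in step 3, the ``telescoping along the box''.} For the lifted box to be approximately invariant under the projectivised $A$-action, the vectors must be transported from one starting vector. Let $\xi_n$ be the unit vector at $x_n$ realising $\|\mathcal{D}(a_n,x_n)\|$, write $s^j=s_1^{j_1}\cdots s_r^{j_r}$ and $B_n=\prod_i\{0,\dots,m_{i,n}\}$. The lifted points are then $p_j=(s^jx_n,[\mathcal{D}(s^j,x_n)\xi_n])$ for $j\in B_n$. Put $\Phi_n(j)=\log\|\mathcal{D}(s^j,x_n)\xi_n\|$ and $f_i(x,\xi)=\log\|\mathcal{D}(s_i,x)\xi\|$. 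Then $f_i(p_j)=\Phi_n(j+e_i)-\Phi_n(j)$, so summing over the box telescopes only in the $i$-th coordinate:
\[
\sum_{\substack{j\in B_n\\ j_i<m_{i,n}}} f_i(p_j)=\sum_{j'}\bigl(\Phi_n(j',m_{i,n})-\Phi_n(j',0)\bigr),
\]
where $j'$ runs over the remaining coordinates. Thus $\sum_i v_i\int f_i\,d\hat\mu_n$ is asymptotically $|m_n|^{-1}$ times a sum of \emph{face-averaged} increments of $\Phi_n$. It is not $|m_n|^{-1}\bigl(\Phi_n(m_n)-\Phi_n(0)\bigr)$.

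Your hypothesis controls $\Phi_n$ only at the single corner $m_n$. On the rest of the faces you only know that $\Phi_n$ is $C$-Lipschitz with $\Phi_n(0)=0$; nothing in the cocycle identity excludes any such function. Such a $\Phi_n$ can equal $\varepsilon'|m_n|$ at the corner yet dip to order $-C|m_n|$ on most of each face, making every face average negative. So the inequality $\sum_i v_i\int f_i\,d\hat\mu\geq\varepsilon'$ does not follow. In rank one the faces are points and the telescoping is exact, which is precisely why the paper first reduces to a single element of $A$.

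\textbf{The missing idea is that reduction.} Use suprema over $M^\alpha$ and submultiplicativity. Suppose every element $a_i$ of a $\mathbb{Z}$-basis of $A$ satisfied $\sup\|\mathcal{D}(a_i^m,\cdot)\|\leq C'e^{\varepsilon d(e,a_i^m)}$. Then by \eqref{eq:left_inv_metrics_are_qi} every $s\in A$ would satisfy $\sup\|\mathcal{D}(s,\cdot)\|\leq C''e^{\varepsilon C d(e,s)}$. This contradicts the Cartan-reduced estimate, once the factor $C+1$ is built into the exponent as the paper does.

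Hence a single $a=a_i^{\pm1}$ has $\|\mathcal{D}_{(x_\ell,y_\ell)}a^{n_\ell}v_\ell\|\geq e^{\varepsilon d(e,a^{n_\ell})}$. One-dimensional empirical measures along $T^1a^j(x_\ell,v_\ell)$, $0\le j<n_\ell$, then telescope exactly and give an $a$-invariant measure with $\lambda_+(a,\cdot,\mathcal{D})\geq\varepsilon$. $A$-invariance is obtained as a separate step: F\o lner-average over $A$ using Lemma~\ref{prop:Top_Lyapunov_amenable_averaging}, which cannot decrease $\lambda_+(a,\cdot)$ since $A$ commutes with $a$ and $\lambda_+$ is upper semicontinuous, then take an ergodic component.

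Alternatively, you could keep your limiting direction $v$ and write $a_n=a^{N_n}b_n$, with $a$ a fixed lattice element close to the direction $v$ and $d(e,b_n)$ a small multiple of $d(e,a_n)$. This again reduces everything to powers of one element.

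\textbf{Two smaller points.}
\begin{enumerate}
\item Your boxes are not F\o lner when some $m_{i,n}$ stays bounded, so $A$-invariance of the limit is not automatic anyway.
\item Your continuity worry is moot. Since $\mathcal{F}$ is compact open, the return cocycle $x\mapsto\gamma_x(g)$ is locally constant, so $\mathcal{D}(g,\cdot)$ is continuous for each fixed $g$.
\end{enumerate}
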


We then average this measure $\mu$ along suitably chosen unipotent subgroups using Ratner's Theorems \cite{ratner_raghunathans_1995, margulis_invariant_1994} on its projection to $G/\Gamma$. By doing this we obtain a measure that also has a nonzero Lyapunov exponent, with the added property that its projection under $\pi$ is the Haar measure on $G/\Gamma$, which we denote by $m_{G/\Gamma}$.

\begin{prop}
\label{prop:average_to_haar}
    Suppose $\Gamma$ fails to have uniform subexponential growth of derivatives. Then there is some $a\in A$ and an $A$-invariant, $A$-ergodic measure $\mu$ on $M^\alpha$ such that $\lambda_+(a,\mu,\mathcal{D})>0$, and such that $\pi_*\mu = m_{G/\Gamma}$.
\end{prop}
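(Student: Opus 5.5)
Starting from Proposition~\ref{prop:exists_positive_lyapunov}, the plan is to average the measure $\mu$ along unipotent subgroups, following \cite{brown_zimmers_2020, brown_c1_2022}. Let $\mu$ be $A$-invariant and $A$-ergodic with $\lambda_+(a,\mu,\mathcal{D})>0$. Its projection $\nu=\pi_*\mu$ is an $A$-invariant, $A$-ergodic probability measure on $G/\Gamma$.

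First, choose a one-parameter-type unipotent subgroup $U\le G$ normalised by $A$. Concretely, take a root group $U^\beta$ (or a coarse Lyapunov subgroup) for a root $\beta$ with $\beta(a)=0$, or better, one that is contracted or centralised by a suitable element $a'\in A$, so that averaging does not destroy the exponent. The basic mechanism is as follows. Form the F\o lner averages $\mu_T=\frac{1}{m(U_T)}\int_{U_T} u_*\mu\,du$ over balls $U_T\subset U$ that are conjugated into one another by $A$. Any weak-$*$ limit $\mu'$ is $AU$-invariant, since $A$ normalises $U$ and the balls are A-equivariant. One must check that tightness holds: $G/\Gamma$ is compact because $\Gamma$ is uniform, so $M^\alpha$ is compact and this is automatic. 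One must also check that the exponent survives. The standard argument from \cite{brown_zimmers_2020} uses the fact that the fibrewise top exponent $\lambda_+(a',\cdot)$, for $a'$ in the centraliser of the chosen root's kernel, is upper semicontinuous, or, in the $C^1$ setting, controlled via the continuous cocycle $\mathcal{D}$ and subadditivity as in \cite{brown_c1_2022}. Combined with the fact that $u$ moves points only boundedly relative to $a'$-dynamics, this shows that each $u_*\mu$ has the same exponent for $a'$ and that the limit retains a positive exponent. So we choose $a'$ in $\ker\beta$ with $\lambda_+(a',\mu)>0$. This is possible when the rank is at least $2$, because the exponent is a linear functional on $A$ which is nonzero, and $\ker\beta$ is a hyperplane not contained in its kernel for an appropriate choice of $\beta$.

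Next, we iterate this procedure. After ergodic decomposition, the projection of $\mu'$ is $AU$-invariant. Ratner's measure classification and the $p$-adic version in \cite{ratner_raghunathans_1995, margulis_invariant_1994} describe the ergodic components of the projection under the unipotent group as homogeneous measures on orbits $Lx\Gamma$. Repeating with further root groups, we gain invariance under the group generated by $A$ and enough root subgroups. A subgroup generated by $A$ together with $U^{\beta}$ and $U^{-\beta}$ for a generating set of roots is, by simplicity of $G$, all of the subgroup $G^+$ generated by unipotents, which is of finite index, and in fact all of $G$ up to a finite-index issue. Ratner then forces the projection to be $G^+$-invariant, hence equal to $m_{G/\Gamma}$, possibly after averaging over the finitely many $G^+$-orbits using that $A$-invariance plus the $G/G^+$ finiteness lets us translate. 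Finally, we pass to an $A$-ergodic component that still has a positive exponent for some $a$. The projection of such a component is an $A$-ergodic component of $m_{G/\Gamma}$, and by Moore ergodicity this is $m_{G/\Gamma}$ itself.

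The main obstacle is keeping the positive exponent through each averaging step. Each new root $\beta$ must be chosen so that $\ker\beta$ contains an element with positive top exponent for the current measure. We must also avoid needing both $U^\beta$ and $U^{-\beta}$ for the same $\beta$ to lie in the kernel of the exponent functional. The plan is to follow the resonance/"non-resonant root" bookkeeping of \cite[Section 5]{brown_zimmers_2020}. Because the number of distinct fibre exponent functionals is at most $\dim M\le r$, which is less than the number of roots, one can find, at each stage, roots whose kernels avoid the relevant constraints. We will also need a $p$-adic check: that the $A$-action and the root structure over $k$ behave as in the real case for these linear-algebra counting arguments.
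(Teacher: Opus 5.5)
\textbf{There is a genuine gap.} Your single-step mechanism is correct and is what the paper uses. You Følner-average over a root group centralised by some $a'\in\ker\beta$ with $\lambda_+(a',\mu,\mathcal{D})>0$, and invoke upper semicontinuity as in Lemma~\ref{prop:Top_Lyapunov_amenable_averaging}. At the end you pass to an $A$-ergodic component, whose projection is $m_{G/\Gamma}$ by ergodicity. The gap is the iteration, which is where the content of the proposition lies.

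\emph{Averaging can destroy invariance already gained.} ``Repeating with further root groups, we gain invariance under the group generated by $A$ and enough root subgroups'' has no mechanism behind it. If $\nu$ on $G/\Gamma$ is $\langle A,U_{\beta_1}\rangle$-invariant, then $U_{\beta_2}\ast\nu=\int m_x\,d\nu(x)$ is $U_{\beta_2}$-invariant. But $u_1\in U_{\beta_1}$ sends $m_x$ to the homogeneous measure on $\overline{(u_1U_{\beta_2}u_1^{-1})u_1x}$, not to $m_{u_1x}$. So $U_{\beta_1}$-invariance is in general lost unless commutators are controlled.

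\emph{The top functional can change.} After each step the top Lyapunov functional may change, so the admissible $a'$ and roots move. Nothing guarantees the process ends with invariance under a generating set.

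\emph{The counting argument does not help.} Bounding the number of exponents by $\dim M\le r$ is not available: the proposition has no dimension hypothesis, and the paper remarks that the averaging step needs none. It also misses the real obstruction. A single nonzero $\lambda$ only excludes roots proportional to it, so finding a root at each step is easy; retaining invariance is the hard part.

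\textbf{How the paper resolves this.} It uses only two averaging stages.
\begin{enumerate}
\item Choose a basis with $\lambda\notin\operatorname{span}\{\alpha_2,\dots,\alpha_r\}$, as in Proposition~\ref{prop:bases_of_root_systems}~\ref{prop:bases_of_root_systems:not_in_span}. Take $a'\in\bigcap_{j\ge 2}\ker\alpha_j$ with $\lambda(a')>0$. Then all of $U_{\Delta'}$ centralises this single $a'$, so the positive exponent survives every averaging in the first stage at once.
\item Average over the positive roots of $\Sigma'$ in decreasing height. Lemma~\ref{lem:commutator_invariance} together with $[U_r,U_r]\subset U_{2r}$ ensures no invariance is lost (Lemma~\ref{lem:solvalble_averaging}).
\item Perform exactly one more averaging, chosen according to the \emph{new} top functional. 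If it is not proportional to $\alpha_1$, average over $U_{\alpha_1}$, which commutes with $U_{-\alpha_k}$ for $k\ge 2$. If it is proportional to $\alpha_1$, average over $U_{\delta'}$, which commutes with $U_{\alpha_k}$ for $k\ge 2$, and then descend the chain of roots to reach $U_{\alpha_1}$.
\end{enumerate}
What makes the last step work is Ratner's symmetry, Corollary~\ref{cor:Ratner_symmetry_of_entropy}: $\langle A,U_\beta\rangle$-invariance implies $U_{-\beta}$-invariance. This produces the opposite root groups for free. Your sketch never uses it, and without it you would need to average over both $U_{\pm\beta}$, which cannot both be centralised by an element with positive exponent when $\lambda$ is proportional to $\beta$.

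\textbf{A smaller error.} A weak-$*$ limit of Følner averages on $M^\alpha$ is not automatically $AU$-invariant. For $b\in A$, $b_*\mu'$ is a limit along the rescaled Følner sequence. On $M^\alpha$, unlike on $G/\Gamma$ where Ratner equidistribution gives uniqueness, these limits need not agree. You only get $U$- and $a'$-invariance, and must restore $A$-invariance by a further Følner average over $A$, as the paper does. Also, Lemma~\ref{prop:Top_Lyapunov_amenable_averaging} requires the averaging group to centralise $a'$; a group merely contracted by $a'$ does not preserve the exponent by that argument.
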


This proposition resembles \cite[Proposition 4.7]{brown_zimmers_2020}, and is one of the key innovations in that work. Once the rank is $\geq 3$, one needs to average over unipotent subgroups of dimension $>1$, using a generalisation of Ratner's equidistribution Theorem due to Shah \cite[Corollary 1.3]{Shah1994}. We are unaware of the existence of a version of this in the literature for $p$-adic or $S$-arithmetic groups (even though there is little doubt it holds). To get around this we use some combinatorics of the roots, and specifically the derived series of Borel subgroups, see Lemma~\ref{lem:solvalble_averaging}. 

We now use the Avila-Viana invariance principle from \cite{avila_extremal_2010}, as in \cite{brown_c1_2022}, to obtain extra invariance for the measure $\mu$. This is where the key dimension restriction enters. The following is a special case of Theorem~\ref{theo:invariance_under_stanble_and_unstable_horosphericals} which is sufficient for our purposes.

\begin{theo}
    \label{theo:G_invariance}
    Let $M, G, \Gamma$, $r$, and $\alpha$ be as in the statement of Theorem~\ref{theo:simple_p_adic_case}. Let $\mu$ be an $A$-invariant and $A$-ergodic probability measure on $M^\alpha$ such that $\pi_*\mu = m_{G/\Gamma}$. Suppose that either
    \begin{itemize}
        \item $\dim M < r$, or
        \item $\dim M = r$ and $\alpha(\Gamma)$ preserves a $C^1$-volume form on $M$.
    \end{itemize}
    Then $\mu$ is $G$-invariant.
\end{theo}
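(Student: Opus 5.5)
The plan follows the Brown--Fisher--Hurtado scheme in the form of \cite{brown_c1_2022}. We show that $\mu$ is invariant under every root subgroup $U^\beta$ of $G$ associated to a $k$-root $\beta$. Since $G$ is simple and isotropic, the root subgroups generate $G^+$, and $G^+$ has finite index in $G$. Finite-index $G^+$-invariance, combined with $A$-ergodicity and $A\subset G^+$ (after replacing $A$ by a finite index subgroup if needed), will then give $G$-invariance.

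\textbf{Lyapunov functionals.} Consider the fibrewise derivative cocycle $\mathcal{D}$ restricted to $A$, with respect to the $A$-ergodic measure $\mu$. Oseledec's theorem for the abelian group $A\cong\Z^r$ (after passing to the lattice of $k$-points of $A$) produces finitely many fibre Lyapunov functionals $\lambda_1,\dots,\lambda_\ell$ on $\mathfrak{a}\cong\R^r$, with $\ell\le\dim M$. Here $\mathfrak{a}$ is the real vector space $A/A_{\mathrm{cpt}}\otimes\R$, and the roots of $G$ also live in its dual.

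\textbf{Choice of a good element.} Fix a root $\beta$. We want $a_0\in A$ such that:
\begin{enumerate}
\item $\lambda_i(a_0)\le 0$ for all $i$, in the case $\dim M<r$;
\item in the volume-preserving case $\dim M=r$, where $\sum\lambda_i=0$, we have $\lambda_i(a_0)=0$ for all $i$;
\end{enumerate}
and in both cases $\beta(a_0)<0$, so that $U^\beta$ is contained in the unstable (or stable, depending on convention) horospherical subgroup of $a_0$.

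When $\dim M<r$, the kernels of the at most $r-1$ functionals $\lambda_i$ intersect in a subspace of dimension $\ge 1$. The finite-index issue is handled by choosing rational points. Working with the full family of roots, we need $\bigcap\ker\lambda_i$ not contained in $\ker\beta$. If this fails, we replace the kernel condition by a half-space condition: the cone $\{\lambda_i\le 0\ \forall i\}$ has nonempty interior or is large enough. A simple linear-algebra argument, using that fewer than $r$ functionals cannot cut out a cone contained in $\{\beta\ge0\}$ unless $\beta$ is a nonnegative combination of the $\lambda_i$, handles most cases. The remaining cases are treated by applying the same argument to $-\beta$ and using the opposite horospherical subgroup. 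In the volume-preserving case the $r$ functionals sum to zero, so they span at most $r-1$ dimensions and the common kernel is nontrivial.

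\textbf{Invariance principle.} With $a_0$ fixed, the fibre cocycle has all exponents $\le 0$ (respectively $=0$). Consider the disintegration $\{\mu_x\}$ of $\mu$ over $G/\Gamma$, which is possible since $\pi_*\mu=m_{G/\Gamma}$. Apply the Avila--Viana invariance principle \cite{avila_extremal_2010}, in the $C^1$ form used in \cite{brown_c1_2022}, to the skew product of $a_0$ on $M^\alpha$ over the base $(G/\Gamma,m_{G/\Gamma},a_0)$. It shows that $x\mapsto\mu_x$ is essentially invariant under the stable holonomies of the base. These are given by translation by the contracting horospherical subgroup $U^-_{a_0}\supseteq U^\beta$, because $\mathcal{D}$ is constant in a natural trivialisation along $G$-orbits. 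The base measure has local product structure precisely because it is Haar, so the hypothesis $\pi_*\mu=m_{G/\Gamma}$ is used here. One checks the $p$-adic setting fits the framework by using a Lipschitz-in-the-base structure along the totally disconnected horospherical leaves. Holonomy invariance together with the Haar base then gives $U^\beta$-invariance of $\mu$. Running over all $\beta$ completes the proof.

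\textbf{Main obstacle.} I expect the main obstacle to be the linear-algebraic step of choosing $a_0$ for every root simultaneously. This is where $\dim M<r$ or volume preservation is used, and the sign bookkeeping is delicate: the invariance principle requires nonpositive (or zero) fibre exponents along $a_0$ while $U^\beta$ is contracted. The fix is to use both $\beta$ and $-\beta$, as in \cite{brown_c1_2022}.
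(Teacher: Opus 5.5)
Your overall scheme matches the paper's: run Avila--Viana along a well-chosen element of $A$, and use $\pi_*\mu=m_{G/\Gamma}$ to turn measurability of the disintegration into invariance. There is, however, a genuine gap in how you choose $a_0$.

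\textbf{The main gap: choosing $a_0$ inside $A$.} The Lyapunov functionals $\lambda_i$ of $\mathcal{D}$ over $A\cong\mathbb{Z}^r$ are arbitrary real linear functionals, and nothing makes them rational.
\begin{itemize}
\item In the volume-preserving case you need $\lambda_i(a_0)=0$ for all $i$, since nonpositivity together with $\sum_i\lambda_i=0$ forces this.
\item Even when $\dim M<r$, the cone $\{\lambda_i\le 0\ \forall i\}$ can degenerate to a subspace, for instance when $\lambda_2=-\lambda_1$.
\end{itemize}
So you need a nonzero point of $A$ in a subspace such as $\bigcap_i\ker\lambda_i$. Such a subspace is typically irrational and meets $\mathbb{Z}^r$ only in $0$. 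For example, take $r=2=\dim M$ and, in coordinates $A\cong\mathbb{Z}^2$, $\lambda_1(n_1,n_2)=-\lambda_2(n_1,n_2)=n_1+\sqrt{2}\,n_2$. ``Choosing rational points'' cannot fix this. Lattice points close to the kernel are also useless, because the invariance principle needs $\lambda_-\ge 0$ exactly.

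The missing idea is the paper's suspension of the $\mathbb{Z}^r$-action to an $\mathbb{R}^r$-action on $\widehat{M^\alpha}=(\mathbb{R}^r\times M^\alpha)/\mathbb{Z}^r$. It carries the cocycle $\widehat{\mathcal{D}}_{(v,x,y)}w=\mathcal{D}_{(x,y)}(\lfloor v+w\rfloor)$, whose Lyapunov functionals restrict to the $\lambda_i$ on $\mathbb{Z}^r$. One then takes a \emph{real} $v\neq 0$ in $\bigcap_i\ker\lambda_i$. Avila--Viana is applied to the time-$v$ map over $\mathbb{R}^r/\mathbb{Z}^r\times G/\Gamma$, with the $\sigma$-algebra $\mathcal{B}_{\mathbb{R}^r/\mathbb{Z}^r}\otimes\mathcal{B}_\xi$. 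This is done after the measurable conjugacy $\Phi$ that makes the fibre maps $\xi$-measurable.

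\textbf{The root-by-root selection.} This step does not work as you claim. By Farkas' lemma, an element with all $\lambda_i\le 0$ and $\beta$ of the required sign exists if and only if $\beta\notin\operatorname{cone}(\lambda_1,\dots,\lambda_\ell)$.
\begin{itemize}
\item Your fallback to $-\beta$ fails when both $\pm\beta$ lie in that cone, e.g.\ $\lambda_1=\beta$, $\lambda_2=-\beta$ with $\dim M=2<r=3$.
\item The same failure occurs in the volume-preserving case when $\bigcap_i\ker\lambda_i\subset\ker\beta$.
\end{itemize}
You do not need every root. With one $v$ in the common kernel, all exponents vanish for both $v$ and $-v$. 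This gives invariance under $G_v^-$ and $G_v^+$, which generate $G$ because $G$ is simple. That removes all the sign bookkeeping you flagged as the main obstacle.

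Your signs are also reversed. Avila--Viana with $\lambda_-\ge 0$ (nonnegative fibre exponents) gives invariance along the \emph{contracting} horospherical subgroup; nonpositive exponents give the expanding one.
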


Theorem~\ref{theo:subexp_growth} follows by a standard argument from Proposition~\ref{prop:average_to_haar}, Theorem~\ref{theo:G_invariance}, and Zimmer's Cocycle Superrigidity Theorem; we recall this for the readers' benefit.

\begin{proof}[Proof of Theorem~\ref{theo:subexp_growth}]
    We prove this by contradiction. Suppose that $\alpha$ fails to have uniform subexponential growth of derivatives. Then by Proposition~\ref{prop:average_to_haar} there exists an $a\in A$ and an $A$-invariant $A$-ergodic probability measure $\mu$ on $M^\alpha$ such that $\lambda_+(a,\mu,\mathcal{D})>0$ and $\pi_*\mu = m_{G/\Gamma}$ is the Haar measure on $G/\Gamma$. By Theorem~\ref{theo:G_invariance} this measure is $G$-invariant.

    \myindent Let $m=\dim M$. We claim there exists a measurable map $\Phi:M^\alpha \to \GL(m,\R)$ and a compact $K\subset  \GL(m,\R)$ such that
    \begin{equation}
    \label{eq:compact_cocycle}
        \Phi(g\cdot z) (\mathcal{D}_zg) \Phi(z)^{-1} \in K
    \end{equation}
    for all $g\in G$ and almost every $z\in M^\alpha$. Indeed, by the version of Zimmer's cocycle superrigidity appearing in \cite[Theorem 1.4]{FisherMargulis2003} the cocycle $\mathcal{D}:G\times M^\alpha \to \GL(m,\R)$ is measurably cohomologous, up to compact error, to a continuous group homomorphism $G \to \GL(m,\R)$. Since $m  \leq \operatorname{rank}_{\C}(\mathbb{G}) < v(G)$, this representation must be trivial, proving the claim.

    \myindent Choose sets $X_k\subset M^\alpha$ with $\mu(X_k) \nearrow 1$ as $k\to \infty$, for which the norms of $\Phi$ and $\Phi^{-1}$ are bounded. It is clear from Poincaré recurrence for the $X_k$ and \eqref{eq:compact_cocycle} that for every $\varepsilon>0$ the set of $z\in M^\alpha$ such that  
    \begin{equation*}
        \liminf_{n\to \infty} \frac{1}{n}\log\|\mathcal{D}_zg^n\| \geq \varepsilon
    \end{equation*}
    has measure zero. This contradicts the existence of a nonzero top Lyapunov exponent, completing the proof.
\end{proof}

\subsection*{Step 2: Strong Property (T)}
\label{sect:strong_T}

We recall \cite[Corollary B]{brown_c1_2022}, which is an adaptation to the $C^1$-setting of \cite[Theorem 2.9]{brown_zimmers_2020}. It uses strong Property (T), which was introduced by Lafforgue \cite{Laf08, Laf09}. Lafforgue showed that $\SL_3(k)$ has strong property (T) for $k$ any non-Archimedean local field, and Liao \cite{Liao} extended this to $\Sp_4(k)$ and thus to all almost simple $k$-groups with $\operatorname{rank}_k\geq 2$. strong Property (T) is now known for all lattices in products of high-rank semisimple groups over local fields, see \cite{dLdlS15, dlS19}.

\myindent The following is a combination of \cite[Proposition 5]{brown_c1_2022} (which relies on strong property (T)) and \cite[Lemma 1]{brown_c1_2022} (which relies on the solution of the Hilbert-Smith conjecture for sufficiently Hölder actions as in \cite{RepovsScepin1997, Maleshich1997}), although it is not stated in this form.

\begin{theo}
\label{theo:image_is_compact}
    Let $\Gamma$ be a group with strong property (T), $M$ a compact manifold, and $\alpha:\Gamma \to \Diff^1(M)$ an action with uniform subexponential growth of derivatives.

    Then there is a compact Lie group $K \leq \operatorname{Homeo}(M)$ such that $\alpha(\Gamma) \leq K$.
\end{theo}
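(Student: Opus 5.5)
The plan is to combine the two results from \cite{brown_c1_2022} cited just before the statement. The first gives a $\Gamma$-invariant metric; the second turns isometry into a compact Lie group.

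\textbf{Step 1: an invariant continuous metric.} Fix a background Riemannian metric on $M$, and consider the Banach space $E$ of continuous sections of the bundle of symmetric bilinear forms on $TM$, or a suitable $L^2$/Sobolev-type completion as in \cite[Proposition 5]{brown_c1_2022}. The group $\Gamma$ acts on $E$ by pullback, $\gamma\cdot g = (\alpha(\gamma)^{-1})^*g$. The operator norm of this action is controlled by $\sup_y\|D_y\alpha(\gamma^{-1})\|^2$. Uniform subexponential growth of derivatives therefore says that this linear representation has operator norms bounded by $C_\varepsilon e^{\varepsilon\ell(\gamma)}$ for every $\varepsilon>0$. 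Strong property (T) in Lafforgue's sense applies to such small-growth representations on suitable Banach spaces (for instance Hilbert spaces, or $L^p$ with the right control). It yields a projection onto invariant vectors as a limit of averages $\int \pi(\gamma)\,dm_n(\gamma)$. Apply this projection to the background metric $g_0$. Positivity is preserved under averaging, so the result is a $\Gamma$-invariant symmetric form $g_\infty$. Some bounds need to be checked: the averages of $g_0$ stay bounded from above and below because every pulled-back metric is positive definite and the averages are convex combinations. \textbf{The main obstacle} is to get continuity and nondegeneracy, not just measurable invariance. The way to handle it, following \cite{brown_c1_2022}, is to work in a Sobolev space $H^s$ of sections with $s$ large. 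The bound on derivatives only controls $C^0$ norms in $C^1$ regularity, so one instead passes to a metric on $M$ defined as a distance function. One gets a $\Gamma$-invariant distance $d_\infty$ bi-Hölder, or at least with Hölder exponent close to $1$, equivalent to the Riemannian distance. The subexponential growth with arbitrarily small $\varepsilon$ is exactly what makes the Hölder exponent arbitrarily close to $1$.

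\textbf{Step 2: isometry group.} With $d_\infty$ in hand, $\alpha(\Gamma)$ lies in $\operatorname{Isom}(M,d_\infty)$. By Arzelà--Ascoli this is a compact group $K$ of homeomorphisms: it is equicontinuous and $M$ is compact. It remains to show $K$ is a Lie group. By the structure theory of compact groups, $K$ is an inverse limit of Lie groups. If it were not Lie, it would contain a copy of $\Z_p$ for some $p$, acting faithfully on $M$. Elements of $K$ are uniform limits of elements of $\alpha(\Gamma)$, so they are Hölder homeomorphisms with exponent arbitrarily close to $1$ with respect to the smooth metric. This contradicts the Hölder Hilbert--Smith results of \cite{RepovsScepin1997, Maleshich1997}, packaged as \cite[Lemma 1]{brown_c1_2022}. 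Hence $K$ is a compact Lie group in $\operatorname{Homeo}(M)$ containing $\alpha(\Gamma)$.

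I expect Step 1 to be the main difficulty. It requires the right choice of Banach space on which strong (T) applies and on which subexponential growth yields the required norm bound. The Hölder regularity of the limiting distance must then be tracked carefully so that Step 2's Hilbert--Smith input applies.
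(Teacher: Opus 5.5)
Your proposal follows the paper's route. The paper obtains the statement by simply combining \cite[Proposition 5]{brown_c1_2022} (strong property (T) plus uniform subexponential growth of derivatives gives an $\alpha(\Gamma)$-invariant structure, hence an invariant distance H\"older-equivalent to the Riemannian one) with \cite[Lemma 1]{brown_c1_2022} (compactness of the resulting isometry group plus the H\"older Hilbert--Smith theorem of \cite{RepovsScepin1997, Maleshich1997}), and your Step~2 is a faithful sketch of the latter.

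One caution about Step~1: the heuristics you float there do not work as written for $C^1$ actions, so that step has to be imported from \cite{brown_c1_2022} as a black box, exactly as the paper does. Strong property (T) here is a statement about Hilbert spaces, not spaces of continuous sections. $C^1$ diffeomorphisms do not preserve high-order Sobolev spaces. And the averages of pulled-back metrics are only bounded above and below by subexponentially growing constants, not uniformly as you claim.
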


\subsection*{Step 3: Margulis Superrigidity with Compact Image} \label{sect:compact_codomain}

We first describe some consequences of Margulis Superrigidity when the image of a real representation is bounded. This is well-known but instructive to review. Let $\phi:\Gamma \to K$ be such a representation with infinite image, where $K$ is a compact real Lie group. By passing to a subgroup we may assume that $K = \overline{\phi(\Gamma)}$, and up to finite index $K$ is connected. By the classification of compact Lie groups $K$ is isogenous to $\mathbb{T}^d \times K_1 \times \cdots \times K_\ell$ where the $K_i$ are compact simple Lie groups. 

\myindent Using the fact that the abelianisation of $\Gamma$ is finite (which follows from property (T)) we have that $d = 0$, and so it clearly suffices to analyse the case $K = K_i$ is simple. By \cite[Lemma 6.1.6 and Lemma 6.1.7]{Zimmer1984} (see also \cite[page 10]{brown_zimmers_2020}) we can find some number field $\mathbb{F}$ and an $\mathbb{F}$-group $\mathbb{K}$ such that $K = \mathbb{K}(\R)$ and $\phi(\Gamma) \leq K(\mathbb{F})$.

\myindent By applying superrigidity to $\phi(\Gamma) \hookrightarrow \mathbb{K}(\mathbb{F}_\nu)$ every place $\nu$ of $\mathbb{F}$ that does not lie over $k$, it follows that up to finite index every element of $\phi(\Gamma)$ has entries in $\mathcal{O}_\mathbb{F}[1/\varpi]$, where $\varpi$ is a uniformiser of $k$.

\myindent By compactness of $K$ it also follows that $\phi(\gamma)$ is semisimple for every $\gamma \in \Gamma$, so in particular we can pick some $h \in \phi(\Gamma)$ of infinite order. Hence some eigenvalue $t$ of $h$ is not a root of unity, so by \cite[Lemma 4.1]{tits_free_1972} there is some place $\nu$ of $\mathbb{F}$ and an embedding $\sigma:\mathbb{F}\to \mathbb{F}_\nu$ with $|\sigma(t)|_\nu \neq 1$.

\myindent So the representation $\phi^\sigma$ obtained by applying $\sigma$ to each entry is not bounded, and $\phi^\sigma(\Gamma)$ is Zariski-dense in some non-compact simple group $\mathbb{K}(\mathbb{F}_\nu)$ defined over $\mathbb{F}_\nu$. By superrigidity therefore $\phi^\sigma$ virtually extends to $G$, and so $\mathbb{K}$ must be of the same absolute type as $G$, and hence of the same absolute type as $\mathbb{G}_\infty$.

\myindent In conclusion, if $\phi:\Gamma \to K$ is a representation with infinite image and such that $\overline{\phi(\Gamma)} = K$ is compact, then it must come from some isogeny $\mathbb{G}_\infty(\R) \times \cdots \times \mathbb{G}_\infty(\R) \to K$.

\myindent We also record the following fact involving the quantity $d$ defined in the introduction. Recall that a real semisimple Lie group $H$ is called \emph{isotypic} if all the simple factors of $H$ have the same absolute type. As discussed in Section~\ref{sect:S-arithmetic} it is a consequence of Margulis' Arithmeticity Theorem that if $G$ is a higher rank $S$-arithmetic group admitting an irreducible lattice, then $G$ is isotypic.

\begin{prop}
\label{prop:d_vs_rank}
    Let $H$ be an isotypic real semisimple Lie group. Then
    $$d(H) \geq 2\operatorname{rank}_\R(H_i)-1$$
    for all simple higher-rank factors $H_i$ of $H$. In particular,
    $$d(H) >  \operatorname{rank}_\R(H_i)$$
    for every simple factor $H_i$ of $H$.
\end{prop}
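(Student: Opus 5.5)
The plan is to reduce to a single compact simple Lie algebra and then bound the codimension of its proper subalgebras from below by a dimension count on homogeneous spaces, treating small ranks separately.

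\textbf{Step 1: reduction to one simple factor.} Since $H$ is isotypic, $\mathfrak{h}_\C \cong \mathfrak{s}^n$ for a single complex simple Lie algebra $\mathfrak{s}$. A complex simple factor of $H$ contributes two copies of $\mathfrak{s}$. Hence the compact real form is $\mathfrak{u}^n$, where $\mathfrak{u}$ is the compact form of $\mathfrak{s}$. I would first show $d(H) = d(\mathfrak{u})$, where $d(\mathfrak{u})$ is the minimal codimension of a proper subalgebra of $\mathfrak{u}$.
\begin{itemize}
\item It suffices to consider maximal proper subalgebras $\mathfrak{l} \subsetneq \mathfrak{u}^n$.
\item Consider the projections of $\mathfrak{l}$ to the factors. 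If some projection is proper, then $\mathfrak{l}$ lies in a subalgebra of the form $\mathfrak{u}^{n-1}\times \mathfrak{l}_1$, whose codimension is at least $d(\mathfrak{u})$.
\item If every projection is onto, a Goursat-type argument makes $\mathfrak{l}$ contain, up to reordering, a diagonal-type subalgebra $\{(x,\theta(x))\}$ in two factors. Its codimension is $\geq \dim \mathfrak{u} > d(\mathfrak{u})$.
\end{itemize}
Moreover $\operatorname{rank}_\R(H_i) \le \operatorname{rank}_\C(\mathfrak{s}) =: \ell$. So it suffices to prove $d(\mathfrak{u}) \ge 2\ell - 1$ whenever $\ell \ge 2$.

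\textbf{Step 2: passing to closed subgroups.} Let $\mathfrak{l} \subsetneq \mathfrak{u}$ have codimension $c$, let $U$ be the simply connected compact group with Lie algebra $\mathfrak{u}$, and let $L$ be the connected subgroup with Lie algebra $\mathfrak{l}$. The Lie algebra of the closure $\overline{L}$ contains $\mathfrak{l}$ as an ideal, because $\mathfrak{l}$ is $\operatorname{Ad}(\overline{L})$-invariant. If $\overline{L} = U$, then $\mathfrak{l}$ would be a proper ideal of the simple algebra $\mathfrak{u}$, hence $\mathfrak{l}=0$ and $c = \dim \mathfrak{u}$, which is large. Otherwise we may replace $\mathfrak{l}$ by the Lie algebra of $\overline{L}$, which only decreases $c$. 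So we may assume $L$ is closed.

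\textbf{Step 3: the dimension estimate.} Now $U/L$ is a compact homogeneous manifold of dimension $c$. It carries a $U$-invariant Riemannian metric (average any metric over $U$). The action of $U$ on $U/L$ has finite kernel, since $U$ is simple modulo its centre. Therefore $\dim \mathfrak{u}$ is at most the dimension of the isometry group of $U/L$, giving
$$\dim \mathfrak{u} \le \tfrac{c(c+1)}{2}.$$
Among simple algebras of rank $\ell$ the smallest dimension is $\ell^2 + 2\ell$, attained by type $A_\ell$. Hence $c(c+1) \ge 2\ell^2 + 4\ell$. Suppose $c \le 2\ell - 2$. Then $(2\ell-2)(2\ell-1) \ge 2\ell^2 + 4\ell$, i.e. $2\ell^2 - 10\ell + 2 \ge 0$, so $\ell \ge 5$. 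Thus the crude bound does not rule out $c \le 2\ell-2$ for $\ell \ge 5$. For large $\ell$ I would instead use the quadratic bound $c \gtrsim \sqrt{2}\,\ell$ together with the stronger lower bound $\dim \mathfrak{u} \ge \ell^2 + 2\ell$, or better the known classification of minimal-codimension subalgebras (parabolic-type subalgebras $\mathfrak{u}(n-1)\subset\mathfrak{su}(n)$ of codimension $2\ell$, $\mathfrak{so}(n-1)\subset\mathfrak{so}(n)$, and so on). That classification gives $d(\mathfrak{u}) \ge 2\ell$ outside a few exceptional small cases, e.g. $\mathfrak{g}_2 \supset \mathfrak{su}(3)$ with codimension $6 \ge 3$.

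\textbf{Main obstacle.} The bottleneck is that the isometry-group count alone does not close the inequality for large $\ell$. Here I expect to rely on the tables of minimal codimensions (as in \cite[Tables 1,2, and 3]{Cantat2017Bourbaki1136}) and to check type by type that the minimum is $\ge 2\ell - 1$, covering the low ranks $\ell = 2, 3, 4$ and all exceptional types by direct inspection.

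\textbf{The second statement.} For a simple factor $H_i$ of rank $r_i \ge 2$, we get $d(H) \ge 2r_i - 1 > r_i$. For $r_i \le 1$ it suffices to note $d(H) \ge 2$. Indeed, a closed codimension-one subgroup would give a $1$-dimensional homogeneous space $U/L$, forcing $\dim \mathfrak{u} \le 1$ by the same isometry bound, which is impossible for simple $\mathfrak{u}$.
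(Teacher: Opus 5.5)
\textbf{There is a genuine gap: your proposal proves the bound only in small rank, and the case that needs a real argument is left to an unexecuted table lookup.}

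Steps 1 and 2 are sound. In Step 1, a maximal $\mathfrak{l}$ with surjective projections is \emph{contained in} a diagonal-type subalgebra $\{x_j=\theta(x_i)\}$; it does not contain one. The isometry count in Step 3 is also correct. But the bound $\dim\mathfrak{u}\le c(c+1)/2$ only gives $c\gtrsim\sqrt{2\dim\mathfrak{u}}$. For $\mathfrak{su}(\ell+1)$ that is about $\sqrt{2}\,\ell$. A count that uses only $\dim\mathfrak{u}$ cannot reach slope $2$ in $\ell$, so your ``quadratic bound'' fallback cannot work.

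If you use each type's actual dimension instead of the uniform minimum $\ell^2+2\ell$, your count does settle $B_\ell$, $C_\ell$, $D_\ell$ and all exceptional types for every $\ell$. In each of these cases $\dim\mathfrak{u}>(\ell-1)(2\ell-1)$. So the only case your argument leaves open is $\mathfrak{su}(n)$ with $n\ge 6$.

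For that case you only announce a type-by-type check, and your predicted outcome is wrong. The subalgebra $\mathfrak{so}(2\ell-1)\subset\mathfrak{so}(2\ell)$ has codimension exactly $2\ell-1$ for every $\ell\ge 3$. This is the sharpness recorded in the remark after the proposition. So it is not true that $d(\mathfrak{u})\ge 2\ell$ outside a few small exceptions.

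\textbf{What is missing} is a lower bound that is linear in the rank with the right slope. The paper gets it from the inequality in Bredon's Corollary IV.5.4:
$$\dim(\mathfrak{u}/\mathfrak{k})\ge\operatorname{rank}\mathfrak{u}+\operatorname{rank}\mathfrak{k}$$
for proper $\mathfrak{k}$. This gives $2\ell-1$ at once whenever $\operatorname{rank}\mathfrak{k}\ge\ell-1$. The paper then passes to isotypic $H$ via $d(H)\ge d(H_i)$, as in your Step 1.

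The paper's claim that a maximal $\mathfrak{k}$ has rank $\ell-1$ is too quick. For example, $\mathfrak{so}(n)\subset\mathfrak{su}(n)$ is maximal of rank $\lfloor n/2\rfloor$. So subalgebras of rank at most $\ell-2$ need separate treatment on that route too.

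Within your own framework, you can attack the open case through the standard representation. Let $L\subsetneq SU(n)$ be closed and connected.
\begin{itemize}
\item If $L$ is reducible on $\mathbb{C}^n$, it lies in $S(U(k)\times U(n-k))$, of codimension $2k(n-k)\ge 2n-2$.
\item If $L$ is irreducible, you need an outside input, such as the bound $\dim L\le n(n+1)/2$ for proper irreducible connected subgroups. This follows from Dynkin's classification of maximal subgroups, and it gives codimension at least $(n-2)(n+1)/2\ge 2n-3$.
\end{itemize}
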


\begin{proof}
    We prove the first inequality, the second clearly holds for simple factors of rank 0 or 1. Let $H$ be a simple real Lie group, and $\mathfrak{h}_c$ the Lie algebra of a compact real form of the complexification $\mathfrak{h}_\mathbb{C}$. It is clear that $\operatorname{rank}_\R(H) \leq \operatorname{rank}_\C(H) = \operatorname{rank}_\R(\mathfrak{h}_c)$. Let $\mathfrak{k} \leq \mathfrak{h}_c$ be any proper subalgebra. By \cite[Corollary IV.5.4]{Bredon1972CompactTransformationGroups} we have that $\dim(\mathfrak{h}_c/\mathfrak{k}) \geq \operatorname{rank}_\mathbb{R}(\mathfrak{h}_c) + \operatorname{rank}_\mathbb{R}(\mathfrak{k})$. If $\mathfrak{k}$ is a maximal proper subalgebra of $\mathfrak{h}_c$ it follows that $\operatorname{rank}_\mathbb{R}(\mathfrak{k}) = \operatorname{rank}_\mathbb{R}(\mathfrak{h}_c)-1$ and so in $d(H) \geq 2\operatorname{rank}_\mathbb{R}(H) - 1$. The case of isotypic semisimple $H$ is easy by noting that $d(H) \geq d(H_i)$ for each simple factor $H_i$ of $H$ and using the above for the simple $H_i$.
\end{proof}

\begin{rem}
    The inequality given in Proposition~\ref{prop:d_vs_rank} is sharp in general, as witnessed by split groups of type $D_r$. See \cite[Table 2]{Cantat2017Bourbaki1136} for the values of $d$ for the classical non-compact simple Lie groups.
\end{rem}

\begin{proof}[Proof of Theorem~\ref{theo:simple_p_adic_case}]
    Suppose for contradiction that $\alpha(\Gamma)$ is infinite. By Theorem~\ref{theo:subexp_growth} we know that $\alpha$ has uniform subexponential growth of derivatives, and so Theorem~\ref{theo:image_is_compact} applies. Let $K$ be as in the conclusion of  Theorem~\ref{theo:image_is_compact}, so $\alpha(\Gamma)\subset K$, and so by the previous discussion (restricting to the case $\overline{\alpha(\Gamma)} = K$) any simple factor of $K$ must be isogenous to $\mathbb{G}_\infty(\R)$.

    \myindent Since by assumption $K$ is non-trivial and compact, there is a closed $K$-orbit in $M$ of the form $K\cdot x \cong K/C$ for some proper closed subgroup $C \leq K$, and thus $\dim(M) \geq \dim(K/C)\geq d(\mathbb{G}_\infty(\R)) > r$, which contradicts our assumptions on $\dim(M)$.
\end{proof}

\subsection*{Paper outline}

The paper is organised as follows. In Section~\ref{sect:S-arithmetic} we recall some fundamental facts about $S$-arithmetic groups and their lattices, and in Section~\ref{sect:smooth_dynamics} we recall important facts from smooth dynamics. In Section~\ref{sect:Averaging} we prove Proposition~\ref{prop:exists_positive_lyapunov} and Proposition~\ref{prop:average_to_haar}. In Section~\ref{sect:invariance} we prove Theorem~\ref{theo:G_invariance}, thus completing the proof of Theorem~\ref{theo:subexp_growth} and as outlined above of Theorem~\ref{theo:simple_p_adic_case}.

    \section{\texorpdfstring{$S$}{}-arithmetic groups}
\label{sect:S-arithmetic}

We introduce the class of groups that we are considering and recall some properties. Recall that $\mathcal{V}$ denotes the set of valuations of $\mathbb{Q}$, for any other number field $\mathbb{K}$ we denote its set of valuations by $\mathcal{V}_{\mathbb{K}}$, and its Archimedean ones by $\mathcal{V}_{\mathbb{K},\infty}$.

Let $S \subset \mathcal{V}$ be a finite set, and for each $p\in S$ let $\mathbb{G}_p$ be a connected semisimple $\mathbb{Q}_p$-group with finite center and no compact factors. Set $r_p = \operatorname{rank}_{\mathbb{Q}_p}(\mathbb{G}_p)$ (the dimension of the maximal subgroup of $\mathbb{G}_p$ that is diagonalisable over $\mathbb{Q}_p$). Let $G = \prod_{p\in S} \mathbb{G}_p(\mathbb{Q}_p)$ and set $s = \sum_{p\in S}r_p$, the \emph{$S$-rank} of $G$. We will call such a $G$, and the associated data, an \emph{$S$-arithmetic group}. A lattice $\Gamma \leq G$ will always be assumed to be irreducible and uniform. Recall that a lattice is irreducible if it has a dense projection to every proper factor of $G$. 

\myindent Margulis' Arithmeticity Theorem \cite[Theorem IX.1.11]{Margulis1991} tells us that as soon as $s \geq 2$, $\Gamma$ is arithmetic. That is, there is a number field $\mathbb{K}$ and some absolutely simple $\mathbb{K}$-group $\mathbb{H} \leq \SL_d$, a finite set of places $S' \subset \mathcal{V}_{\mathbb{K}}$ including the Archimedean ones $\mathcal{V}_{\mathbb{K},\infty}$, and a continuous homomorphism $\varphi:\prod_{\nu\in S'}\mathbb{H}(\mathbb{K}_\nu) \to G$ with compact kernel and finite index image such that $\varphi(\mathbb{H}(\mathcal{O}_{\mathbb{K},S'}))$ is commensurable to $\Gamma$. Here $$\mathcal{O}_{\mathbb{K},S'}:=\{t\in \mathbb{K}:\nu(t)\geq 0 \text{ for every } \nu\in S'\backslash \mathcal{V}_{\mathbb{K},\infty}\}$$
is the ring of $S'$-integers in $\mathbb{K}$. We denote by $G_\infty := \prod_{\nu\in \mathcal{V}_{\mathbb{K},\infty}} \mathbb{H}_\nu(\mathbb{K}_\nu)$, this is in particular a real Lie group. If $S \cap \mathcal{V}_{\mathbb{K},\infty} = \emptyset$, that is we started with a totally disconnected group, then $G_\infty$ is compact. An important consequence of the above, that was already used in Section~\ref{sect:proof_sketch}, is that the simple factors of the $\mathbb{G}_p$ have to be isotypic (that is, they have the same absolute type).

\myindent As explained in the proof sketch, in the sequel we need to prove only Theorem~\ref{theo:simple_p_adic_case}. So from now on (unless mentioned otherwise) $\mathbb{G} \leq \SL_d$ is a simple group defined over a non-Archimedean local field $k$ of characteristic $0$. In particular, $k$ is a finite extension of $\Qp$ for some $p$, and comes equipped with an absolute value $|\cdot|_k$ extending the usual $p$-adic one on $\Qp$. Let $\mathcal{O}_k = \{x\in k:|x|_k \leq 1\}$ be the ring of integers of $k$. We set $G = \mathbb{G}(k)$, and consider a lattice $\Gamma \leq G$. In fact the lattice must be uniform, see \cite{ratner1998} for a nice proof of this fact. Recall that we have the assumption that $r = \operatorname{rank}_k(\mathbb{G}) \geq 2$.

\myindent We indicate briefly two ways of viewing infinite actions of lattices in $p$-adic groups on manifolds in dimension $d(G)$; as noted in Question~\ref{que:compact_real}, we expect this to be the smallest dimension where such actions exist. For simplicity we explain these only for $\SL_n(\mathbb{Q}_p)$, other cases are similar. Here $d(\SL_n(\mathbb{Q}_p)) = d(\SU_n(\R)) = 2n-2$.

\begin{exmp} \label{exmp:lattice_in_SL(n,Qp)}
    Choose some $m \in \N$ coprime to $p$ such that $-m$ is a square in $\Qp$, and let $\sigma$ be the nontrivial involution of $\mathbb{Q}[\sqrt{-m}]$. Consider the unitary group $\mathbb{G}=\operatorname{SU}(h)$ attached to the hermitian form $g^tg^\sigma = \Id$, where $g$ is an $n\times n$ matrix and $g^\sigma$ means apply $\sigma$ component-wise. This is a connected simple $\mathbb{Q}$-group, with $\operatorname{rank}_\R(\mathbb{G})=0$. Therefore for $S' = \{\infty,p\}$ we see that
    \begin{equation*}
       \Gamma = \left\{g\in \SL_n(\Z[\tfrac{\sqrt{-m}}{p}])\mid g^Tg^\sigma = \Id\right\} = \mathbb{G}(\Z_{S'}) \leq \mathbb{G}(\R)\times \mathbb{G}(\Qp) \cong \SU_n(\R) \times \operatorname{SL}_n(\Qp).
    \end{equation*}
   Since $\SU_n(\R)$ is compact then $\Gamma$ is a uniform lattice in $\operatorname{SL}_n(\Qp)$. We have an action of $\mathbb{G}(\R)$, and hence an infinite one of $\Gamma$, on 
    \begin{equation*}
        \C\mathbb{P}^{n-1} \cong \operatorname{SU(n)}/\operatorname{S}(\operatorname{U}(n-1)\times \operatorname{U}(1)),
    \end{equation*}
    a real compact manifold of dimension $2n-2$.
\end{exmp}

\begin{exmp}
    As pointed out to the author by Andreas Wieser, there is another way to construct such actions. Let $\mathbb{C}_p$ be the algebraic closure of $\mathbb{Q}_p$, then there is an algebraic isomorphism $\mathbb{C}_p \cong \mathbb{C}$ which combined with inclusion gives a field embedding $\imath_p:\mathbb{Q}_p \hookrightarrow \C$. Applied component-wise we thus get an embedding $\phi_p:\SL_n(\mathbb{Q}_p) \hookrightarrow \SL_n(\C)$ and thus every nontrivial action of $\SL_n(\C)$ on a manifold gives a (discontinuous) one of $\SL_n(\mathbb{Q}_p)$ and a nontrivial one of any of its lattices.

    \myindent However, the identification $\mathbb{C}_p \cong \mathbb{C}$ has to map the algebraic closure $\overline{\mathbb{Q}}$ to itself, and so corresponds to some element of $\operatorname{Gal}(\overline{\mathbb{Q}}/\mathbb{Q})$. By Mostow Rigidity, as soon as $n\geq 3$ any lattice $\Gamma \leq \SL_n(\mathbb{Q}_p)$ has entries in $\SL_n(\overline{\mathbb{Q}})$ (up to finite index) and so by an argument similar to the one given in Section~\ref{sect:compact_codomain}, $\phi_p|_\Gamma$ actually extends to a homomorphism $\operatorname{SU}_n(\R) \to \SL_n(\C)$. In particular, the actions of $\Gamma$ obtained from $\phi_p$ are the standard ones we expect, despite them coming from somewhat exotic actions of $\SL_n(\mathbb{Q}_p)$.

    \myindent It would be interesting to attempt to modify this construction to obtain genuinely exotic actions of lattices in $\SL_n(\mathbb{Q}_p)$, possibly answering Question~\ref{que:compact_real} in the positive.
\end{exmp}

\subsection{Metrics on $G$ and $\Gamma$}

We denote by $m_k$ the Haar measure on $k$, normalised so that $m_k(\mathcal{O}_k) = 1$. This is related to the absolute value via $dm_k(ax) = |a|_k dm_k(x)$ for all $a\in k$. We consider the uniform norm $\|\cdot\|_k$ on $\operatorname{Mat}_{d\times d}(k)$, where $\|X\|_k := \max_{i,j} \{|X_{i,j}|_k \}$. This gives a natural left-invariant metric on $\SL_d(k)$, and by restriction on $G \leq \SL_d(k)$, via
\begin{equation*}
    d_G(A,B) := \log (1+\|A^{-1}B - \Id\|_k)
\end{equation*}
\begin{comment}
There is an alternative way to define a norm on $G$ via its Bruhat-Tits building. This is a locally finite simplicial complex of finite dimension on which $G$ acts continuously and properly, thus we can lift the combinatorial metric to a left-invariant metric on $G$, denoted $d_G'$. It is a standard fact (see for example \cite[\textsection 3.5]{LMR}) that these two metrics are coarse Lipschitz equivalent --- in particular for any neighbourhood $1\in U \subset G$ there is a constant $C = C(U) > 1$ such that for any $g \in G\backslash U$
\begin{equation*}
    C^{-1}d_{G}(1,g) \leq d'_{G}(1,g) \leq Cd_{G}(1,g).
\end{equation*}
Henceforth in the sequel we will only use $d_G$.
\end{comment}

Since $\Gamma$ is a uniform lattice we can easily relate word metrics on it to $d_G$. For any finite generating set of $\Gamma$ consider the associated word length metric $\ell_\Gamma$ on $\Gamma$. By the Milnor-Švarc lemma, $\ell_\Gamma$ and $d_G(e,\cdot)|_{\Gamma}$ are quasi-isometric.

\subsection{Tori and root spaces}

Let $A \cong (k^\times)^r$ be the maximal $k$-split torus of $G$. After fixing a uniformizer $\varpi$ of $k$ we have that $k^\times \cong \Z \times \mathcal{O}_k^\times$, and $\mathcal{O}_k^\times$, being a closed subset of $\mathcal{O}_k$, is compact. Thus the non-compactness of the torus comes from its $\Z^r$ factor, and from now on we will identify the two. That is, we set $A \cong \Z^r$ and call it the $k$-split torus. We have the Cartan decomposition $G=KAK$ with $K$ a maximal compact subgroup.

\myindent The metric $d_G$ when restricted to $A$ has the following useful property: for every $\Z$-basis $a_1,\ldots,a_r$ of $A$, there is some $C>0$ such that for any $a = a_1^{k_1}\cdots a_r^{k_r}$ we have
\begin{equation}
\label{eq:left_inv_metrics_are_qi}
    d_G(e,a) \leq (|k_1| d_G(e,a_1) + \cdots |k_r| d_G(e,a_r)) \leq Cd_G(e,a) + C
\end{equation}
This follows from the fact that all left-invariant metrics on finitely generated groups are quasi-isometric.

\myindent We have the adjoint representation $\Ad:G\to \GL(\mathfrak{g})$, we call the common eigenspaces for $\Ad(A)$ the \emph{root spaces}. Let $\mathbb{X}_k(A) = \{\chi:A\to k^\times\}$ be the set of multiplicative characters over $k$, for each $\chi$ consider the corresponding eigenspace
\begin{equation*}
    \mathfrak{g}_{\chi} = \{X\in \mathfrak{g}\mid \Ad(a) X = \chi(a)X \text{ for all }a\in A\},
\end{equation*}
we have that $|\chi(a)|_k = |\varpi|_k^{\alpha_\chi(a)}$ for some linear functional $\alpha_\chi:A\to \Z$. Notice that the only character $\chi$ with $\chi(a) = 1$ for all $a\in A$ is the trivial one.

\myindent We recall some basic facts about root spaces, see for example \cite{Springer, Borel}. We consider the (finite) set
\begin{equation*}
    \Sigma = \Sigma(G,A,k) = \{\alpha_\chi \mid \chi \in \mathbb{X}_k(A), \ \alpha_\chi \neq 0, \ \mathfrak{g}_{\chi} \neq 0\},
\end{equation*}
we will drop the subscript $\chi$ from now on, and denote the root spaces by $\mathfrak{g}_\alpha$. We can write
\begin{equation}
\label{eq:root_space_decomp}
    \mathfrak{g}=\Lie(Z_{G}(A))\oplus \left(\bigoplus_{\alpha\in \Sigma}\mathfrak{g}_{\alpha} \right)
\end{equation}
where $\Lie(Z_{G}(A))$, the Lie algebra of the centraliser of $A$ in $G$, coincides with the weight 0 subspace. The derived subgroup of $Z_{G}(A)$ is the \emph{anisotropic kernel} of $G$; although this will not be useful for us, we remark that in our non-Archimedean setting the anisotropic kernel can only be of type $A_n$.

\myindent In fact $\Sigma$ is a root system in the real vector space $\mathfrak{a}^*:=\operatorname{Hom}_\Z(A,\Z)\otimes_\Z \R$, called the \emph{relative root system} of $G$ over $k$ (the space $\mathfrak{a}^*$ is simply the $\R$-valued linear functionals on $A$). Since $G$ is simple, $\Sigma$ is irreducible. The following results are all classical.

\begin{prop} 
\label{prop:root_facts}
    Let $\Sigma = \Sigma(G,A,k)$ be the relative root system for $G$ corresponding to the $k$-split torus $A$. Then 
    \begin{enumerate}[label = {(\roman*)}]
        \item The roots are defined over $\Z$;
        \item \label{prop:root_facts:commutator_relation} $[\mathfrak{g}_{\alpha},\mathfrak{g}_{\beta}] \subset \mathfrak{g}_{\alpha+\beta}$, where it is understood that $\mathfrak{g}_{\alpha+\beta}=0$ if $\alpha +\beta \not\in \Sigma$;
        \item For any two nonzero roots $\alpha,\beta \in \Sigma$ such that $\alpha = c\beta$, then $c \in \{\pm \frac{1}{2},\pm 1,\pm 2\}$;
        \item To each $\alpha \in \Sigma$ there corresponds a $k$-defined connected unipotent subgroup $U_\alpha$, with Lie algebra $\mathfrak{g}_{\alpha}$ if $2\alpha\notin \Sigma$, and $\mathfrak{g}_{\alpha}\oplus \mathfrak{g}_{2\alpha}$ otherwise. The $U_\alpha$ generate $G$.
    \end{enumerate}
\end{prop}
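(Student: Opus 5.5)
These facts are classical and I would deduce them from the structure theory of reductive groups over arbitrary fields (Borel–Tits), as presented in \cite{Borel, Springer}, rather than reprove them. The plan is to identify the set $\Sigma$ defined above with the usual relative root system ${}_k\Phi(\mathbb{G},\mathbb{S})$. Here $\mathbb{S}$ is the maximal $k$-split torus whose $k$-points give $A$. The one point needing care is the translation: the paper replaces the characters $\chi\in\mathbb{X}_k(A)$ by the functionals $\alpha_\chi$ on $A\cong\Z^r$, defined by $|\chi(a)|_k=|\varpi|_k^{\alpha_\chi(a)}$.

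\textbf{Step 1: the map $\chi\mapsto\alpha_\chi$ is an injective group homomorphism $X^*(\mathbb{S})\to\operatorname{Hom}(\Z^r,\Z)$.*}
Since $\mathbb{S}$ is $k$-split, $\mathbb{S}\cong\mathbb{G}_m^r$ and every character is $t\mapsto t_1^{n_1}\cdots t_r^{n_r}$. After the identification $A\cong\Z^r$, given by the powers of $\varpi$ in each coordinate, we get $\alpha_\chi(a)=\sum n_ik_i$. Hence $\chi\mapsto\alpha_\chi$ is an isomorphism onto $\operatorname{Hom}(\Z^r,\Z)$, and root spaces for $\chi$ and for $\alpha_\chi$ coincide. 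This gives (i) directly: the roots are integer-valued on $A$, i.e.\ they lie in the lattice $\operatorname{Hom}_\Z(A,\Z)\subset\mathfrak{a}^*$. It also shows that $\Sigma$ is exactly the image of ${}_k\Phi$.

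\textbf{Step 2: the remaining statements.*}
(ii) follows by differentiating the equivariance of the bracket: $\Ad(a)[X,Y]=[\Ad(a)X,\Ad(a)Y]=\chi(a)\psi(a)[X,Y]$ for $X\in\mathfrak{g}_\chi$ and $Y\in\mathfrak{g}_\psi$. Additivity of $\chi\mapsto\alpha_\chi$ then gives $[\mathfrak{g}_\alpha,\mathfrak{g}_\beta]\subset\mathfrak{g}_{\alpha+\beta}$.

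(iii) is the statement that ${}_k\Phi$ is a (possibly non-reduced) root system, which is Borel–Tits; see \cite[Theorem 21.6]{Borel}. In such a system the only proportional roots are $\pm\alpha$ and $\pm2\alpha$.

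(iv) is the existence of root groups \cite[Proposition 21.9]{Borel}. The unique connected $k$-unipotent subgroup $U_{(\alpha)}$ normalised by $\mathbb{S}$ has Lie algebra $\mathfrak{g}_\alpha\oplus\mathfrak{g}_{2\alpha}$. Generation is where simplicity and isotropy are used. The subgroup generated by the $U_\alpha(k)$ is the group $G^+$. By Tits' simplicity theorem, or by the Borel–Tits result $G=G^+$ for simply connected isotropic groups, it equals $G$; in general it is normal of finite index. So the phrase ``generate $G$'' should be read either for simply connected $\mathbb{G}$ or up to finite index. This is harmless for our purposes, since passing to a finite-index subgroup of $\Gamma$ does not affect Theorem~\ref{theo:simple_p_adic_case}.

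\textbf{Expected obstacle.*}
The only genuinely delicate point is this last one, that the $U_\alpha$ generate $G$. It is the Kneser–Tits problem, which holds over local fields by Platonov and Prasad. I would cite it, and note explicitly the reduction to the simply connected cover or to $G^+$.
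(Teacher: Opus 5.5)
Your proposal is correct and takes the same route as the paper, which gives no proof beyond calling these facts classical and citing \cite{Springer, Borel}. Your explicit identification $X^*(\mathbb{S})\cong\operatorname{Hom}(\Z^r,\Z)$ via $\chi\mapsto\alpha_\chi$ is accurate, and so is your caveat that the $U_\alpha(k)$ generate only $G^+$, which equals $G$ for simply connected $\mathbb{G}$ and has finite index in general; both sharpen the paper's statement. Two small corrections: $G=G^+$ for simply connected $\mathbb{G}$ is the Kneser--Tits problem over local fields (Platonov), not a consequence of Tits' simplicity theorem, and in (ii) your weight computation gives $[\mathfrak{g}_\alpha,\mathfrak{g}_{-\alpha}]\subset\Lie(Z_G(A))$, so the paper's convention ``$\mathfrak{g}_{\alpha+\beta}=0$ if $\alpha+\beta\notin\Sigma$'' must exclude $\beta=-\alpha$.
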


See \cite[Appendix A]{brown_zimmers_2020} for a complete classification of the possible $\Sigma$'s and their Dynkin diagrams.

\begin{rem}
    In Section~\ref{sect:Averaging} we will use the $U_\alpha$ and the combinatorics of $\Sigma$. In some cases there are simplifying assumptions which we note now, although the argument will run in full generality.
    \begin{enumerate}[label = {(\alph*)}]
        \item If $G$ is split then in fact the $\mathfrak{g}_{\alpha}$ are one dimensional for $\alpha \neq 0$;
        \item If $\Sigma$ is not of type $BC_n$, then $\Sigma$ is \emph{reduced}, meaning that in (iii) we have $c\in \{\pm 1\}$. In particular (using (ii)) it is clear that for any $\alpha\in \Sigma$ we have that $[\mathfrak{g}_\alpha,\mathfrak{g}_\alpha] = \{0\}$. For $\Sigma$ of type $BC_n$ there are $n$ positive roots $e_i$ such that $2e_i$ is also a root.
    \end{enumerate}
\end{rem}

For any root system $\Sigma$ on a real vector space $V$ we have the following essential definition.

\begin{defn}
    A subset $\Delta \subset \Sigma$ is called a \emph{basis (or set of simple roots) of $\Sigma$} if $\Delta$ is a basis of $V$ as a vector space, and for any $\beta = \sum_{\alpha \in \Delta} c_i\alpha_i \in \Sigma$, all the $c_i$ are integers of the same sign.
\end{defn}

We collect some fundamental facts about bases.

\begin{prop}
\label{prop:bases_of_root_systems}
    Let $\Sigma$ be an irreducible root system on a real vector space $V$. Then
    \begin{enumerate}[label=(\alph*),ref=(\alph*)]
        \item Bases of $\Sigma$ are in one to one correspondence with the Weyl chambers;
        \item The Weyl group $W = W(\Sigma)$ acts irreducibly on $V$ and simply transitively on the set of bases;
        \item \label{prop:bases_of_root_systems:not_in_span}
        Let $0 \neq \lambda \in V$. Then there is some basis $\Delta = \{\alpha_1,\ldots,\alpha_r\}$ corresponding to the standard Dynkin diagram (as drawn in \cite[Appendix A]{brown_zimmers_2020}) such that $\lambda$ is not in the span of $\{\alpha_2,\ldots,\alpha_r\}$.
    \end{enumerate}
\end{prop}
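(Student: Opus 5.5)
Parts (a) and (b) are standard facts about root systems (Bourbaki, Humphreys), so I will cite them. The correspondence in (a) sends a basis $\Delta$ to the chamber $\{v:\langle \alpha,v\rangle>0 \text{ for all } \alpha\in\Delta\}$, with inverse given by taking the indecomposable positive roots. Simple transitivity of $W$ on chambers transfers through this correspondence to simple transitivity on bases. Irreducibility of the $W$-action on $V$ follows from irreducibility of $\Sigma$. Indeed, if $U\subset V$ is a nonzero proper $W$-invariant subspace, then each root $\alpha$ lies either in $U$ or in $U^\perp$: the reflection $s_\alpha$ preserves $U$, so $U$ is spanned by its intersections with the $\pm1$-eigenspaces of $s_\alpha$, and hence either $\alpha\in U$ or $U\subset\alpha^\perp$. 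The partition of $\Sigma$ into $\Sigma\cap U$ and $\Sigma\cap U^\perp$ is then an orthogonal decomposition of $\Sigma$. Since $\Sigma$ spans $V$, both pieces are nonempty, which contradicts irreducibility.

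The real content is (c), which I will derive from (b). Fix one basis $\Delta_0=\{\alpha_1,\ldots,\alpha_r\}$ labelled according to the standard Dynkin diagram. By (b), every basis has the form $w\Delta_0$ for some $w\in W$, and the labelling transports along $w$, giving $w\Delta_0=\{w\alpha_1,\ldots,w\alpha_r\}$ with the same diagram labels. The span of $\{w\alpha_2,\ldots,w\alpha_r\}$ is the hyperplane $wH$, where $H=\operatorname{span}\{\alpha_2,\ldots,\alpha_r\}$. So it suffices to find $w\in W$ with $\lambda\notin wH$, equivalently $w^{-1}\lambda\notin H$.

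Suppose instead that $w\lambda\in H$ for every $w\in W$. Then the span of the orbit $W\lambda$ is a $W$-invariant subspace contained in the proper subspace $H$, and it is nonzero because $\lambda\neq0$. This contradicts the irreducibility in (b). Hence some $w$ works, and the basis $w^{-1}\Delta_0$ with the induced labelling is the required one.

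The only subtle point is the labelling when the Dynkin diagram has symmetries. Choosing the labelling of the new basis by transport along $w$ always yields a standard labelling, so the claim holds for the specific labelled basis $w^{-1}\Delta_0$. This is all the application requires.
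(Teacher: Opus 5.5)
Your proposal is correct and follows the paper's proof of (c) essentially verbatim. Irreducibility of the $W$-action gives some $w$ with $w\lambda\notin\operatorname{span}\{\alpha_2,\ldots,\alpha_r\}$, and the basis $w^{-1}\Delta_0$ with the transported labelling still has the standard Dynkin labelling, since $W$ preserves lengths and angles. The only difference is that you also justify the irreducibility of $W$ on $V$ and the orbit-span step explicitly, which the paper simply takes as classical.
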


\begin{proof}[Proof of (c)]
    Since $W$ acts irreducibly on $V$, for any proper hyperplane $V' \leq V$ there is some $w\in W$ such that $w(\lambda) \notin V'$.
     Let $\Delta = \{\alpha_1,\ldots,\alpha_r\}$ be any basis with the ordering coming from the standard Dynkin diagram and choose $w$ such that $w(\lambda)\notin \operatorname{span}\{\alpha_2,\ldots,\alpha_r\}$. If we denote $\alpha_i' = w^{-1}(\alpha_i)$ and set $\Delta' = \{\alpha_1',\ldots,\alpha_r'\}$ we have that $\lambda \notin \operatorname{span}\{\alpha_2',\ldots,\alpha_r'\}$. Since the action of $W$ preserves lengths and angles, the Dynkin diagram for $\Delta'$ has the same labelling as that of $\Delta$.
\end{proof}

We will use Proposition~\ref{prop:bases_of_root_systems}~(c) for some $\R$-valued linear functional $\lambda$ on $A$ (i.e. $\lambda \in \mathfrak{a}^*$), to show that there exists a basis $\Delta$ and $a\in A$ such that $\lambda(a) \neq 0$ but $\alpha_j(a) = 0$ for $j = 2,\ldots,r$.
    
    \section{Facts from smooth dynamics}
\label{sect:smooth_dynamics}

We recall some standard facts from smooth dynamics.

\subsection{Oseledets' Theorem}

Let $X$ be a compact metric space with a continuous left $G$-action. A measurable function $\mathcal{C}:G\times X \to \GL(m,\R)$ is called a \emph{linear cocycle} if 
\begin{equation*}
    \mathcal{C}(gh,x) = \mathcal{C}(g,hx)\mathcal{C}(h,x)
\end{equation*}
for all $g,h\in G$ and almost every $x\in X$.

Then $\mathcal{C}$ defines an action of $G$ by vector bundle automorphisms on the trivial bundle $\mathcal{E}:=X\times \R^m$ given by $g\cdot (x,v):= (gx,\mathcal{C}(g,x)v)$. This clearly projects to the $G$-action on $X$.

We will always assume that our cocycle is bounded; that is for every compact $K\subset G$ we have
\begin{equation*}
    \sup_{(g,x)\in K\times X}\|\mathcal{C}(g,x)\| < \infty
\end{equation*}

The following is a special case of the high-rank Oseledets' Theorem found in {\cite[Theorem 2.4]{brown_smooth_2023}}.

\begin{theo}\label{theo:Oseledets}
    Consider $H \cong \Z^k$ or $\R^k$ with a norm $|\cdot|$, and $X$ a compact metric space with a left $H$-action. Let $\mu$ be an $H$-invariant and $H$-ergodic probability measure, and $\mathcal{C}:H\times X \to \GL(m,\R)$ a bounded linear cocycle over this action. For each $x\in X$ let $E(x) \cong \R^m$ be the fiber of $X\times \R^m \to X$ over $x$. Then there are
    \begin{enumerate}[label = {(\roman*)}]
        \item an $H$-invariant subset $\Lambda_0\subset X$ with $\mu(\Lambda_0) = 1$;
        \item linear functionals $\lambda_i:H \to \R$ for $1 \leq i \leq p$ for some $1\leq p\leq m$, called \textbf{Lyapunov functionals};
        \item for every $x\in \Lambda_0$ a splitting $E(x) = \bigoplus_{i=1}^p E_i(x)$ into families of mutually transverse sub-bundles that vary measurably in $x$;
    \end{enumerate}
    such that
    \begin{enumerate}[label = {(\alph*)}]
        \item $\mathcal{C}(h,x)E_i(x) = E_i(hx)$ for every $x\in \Lambda_0$, and
        \item For all $x\in \Lambda_0$ and $v\in E_i(x)\backslash\{0\}$, 
        $$\lim_{|h|\to \infty}\frac{\log|\mathcal{C}(h,x)(v)|-\lambda_i(h)}{|h|} = 0.$$
    \end{enumerate}
\end{theo}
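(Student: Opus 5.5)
The plan is to deduce the statement directly from \cite[Theorem 2.4]{brown_smooth_2023}, checking that our hypotheses match; for the reader's benefit I would also sketch how that theorem reduces to the classical one-parameter Oseledets theorem. The case $H\cong\R^k$ reduces to $H\cong \Z^k$. Restrict to the lattice $\Z^k\le\R^k$. Boundedness of $\mathcal{C}$ on the compact set $[0,1]^k\times X$ controls $\log\|\mathcal{C}(h,x)^{\pm1}\|$ for $h$ in a unit cube, so this only perturbs $\log|\mathcal{C}(h,x)v|$ by $O(1)$ and does not affect the limit in (b). Ergodicity can be handled by passing to ergodic components of the restricted action, since the resulting Lyapunov data is then shown to be $\R^k$-invariant. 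So I concentrate on $H=\Z^k$ with generators $e_1,\dots,e_k$.

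First I would apply the classical Oseledets theorem to each commuting generator $\mathcal{C}(e_j,\cdot)$. Integrability of $\log^+\|\mathcal{C}^{\pm1}\|$ follows from boundedness. This gives a full-measure set and splittings $E(x)=\bigoplus_s E^{(j)}_s(x)$ for each $j$. Since the generators commute and the cocycle identity holds, each $\mathcal{C}(e_i,x)$ maps $E^{(j)}_s(x)$ to $E^{(j)}_s(e_ix)$, because the exponents of $e_j$ are preserved under a bounded conjugacy. Hence the common refinement $E_i(x)=\bigcap_j E^{(j)}_{s_j}(x)$, taken over the nonzero intersections, is an $H$-equivariant splitting. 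The saturation $\Lambda_0=\bigcap_{h\in H} h\Lambda$ is still of full measure and $H$-invariant. By ergodicity the dimensions of the $E_i$ and the exponents are constant. Define $\lambda_i(e_j)$ to be the $e_j$-exponent on $E_i$ and extend linearly. Linearity along each ray $h=\sum n_je_j$ follows by applying the one-parameter Oseledets theorem to $\mathcal{C}(h,\cdot)$: on $E_i$ its exponent is additive in $h$ by the cocycle identity, after restricting to a subspace on which all generators have single exponents. The resulting linear functionals give (ii), (iii) and (a).

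The main obstacle is (b), namely the limit uniform as $|h|\to\infty$ in all directions, rather than along each fixed ray. Here I would use a tempered Lyapunov norm. For $\varepsilon>0$, define on $E_i(x)$
\[
\|v\|'_x=\sum_{h\in\Z^k}|\mathcal{C}(h,x)v|\,e^{-\lambda_i(h)-\varepsilon|h|}.
\]
One must show this sum converges for a.e.\ $x$, which would follow from the generator-wise estimates via a subadditive argument on the commuting products. In this norm, each generator expands $E_i$ by exactly $e^{\lambda_i(e_j)}$ up to a factor $e^{\pm\varepsilon}$. Iterating along any word therefore gives $|\lambda_i(h)-\log\|\mathcal{C}(h,x)v\|'|\le\varepsilon|h|$. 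The comparison function between $\|\cdot\|'_x$ and the background norm is tempered, meaning it grows subexponentially along orbits. I would establish this by the standard Pesin tempering lemma applied to each generator, combined with Birkhoff/Kingman for $\Z^k$-actions. Granting temperedness, letting $\varepsilon\to0$ yields (b). I expect the convergence and temperedness step to be the only genuinely delicate point, which is exactly what \cite{brown_smooth_2023} supplies.
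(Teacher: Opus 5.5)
Your proposal is correct and follows the paper: the paper gives no argument of its own and records the statement as a special case of the higher-rank Oseledets theorem \cite[Theorem 2.4]{brown_smooth_2023}. Your supplementary sketch is not self-contained. Convergence of $\|v\|'_x$ for every $\varepsilon>0$ is equivalent to the upper half of (b). Temperedness in all directions of $\Z^k$ also genuinely needs boundedness of $\mathcal{C}$: mere $L^1$ control of the generators, which suffices for generator-wise tempering, does not yield (b) when $k\ge 2$. Since you defer exactly this step to the reference, correctness is unaffected.
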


\begin{rem}
    In particular this implies the standard result that for any $h\in H$ and $v\in E_i(x)\backslash \{0\}$,
    \begin{equation}\label{eq:expansion}
        \lim_{k\to \pm\infty}\frac{\log|\mathcal{C}(h^k,x)(v)|}{k} = \lambda_i(h).
    \end{equation}
\end{rem}

If $\mu$ is $H$-invariant but not necessarily $H$-ergodic, then Theorem~\ref{theo:Oseledets} holds on each $H$-ergodic component of $\mu$. Even more is true; the data in Theorem~\ref{theo:Oseledets}, including the number of exponents $p(x)$, the subspaces $E_i(x)$, and the linear functionals vary measurably in $X$, see \cite[section 3.6.1]{BarreiraPesin2007}.

\subsection{Top Lyapunov exponent}

For a particular $a\in G$ we are often concerned with the fastest rate of expansion, in the notation of Theorem~\ref{theo:Oseledets} with $H = \langle a\rangle$ this is $\lambda_+(a,\mu,\mathcal{C}):= \max_i\lambda_i(a)$. For more general $H $ of higher dimension there is no reason for the measure $\mu$ to be ergodic with respect to $a$, so we define the \emph{average top Lyapunov exponent}
\begin{equation}
    \lambda_+(a,\mu,\mathcal{C}) = \lim_n \frac{1}{n}\int \log \|\mathcal{C}(a^n,x)\|_{\operatorname{op}}\, d\mu = \inf_n \frac{1}{n}\int \log \|\mathcal{C}(a^n,x)\|_{\operatorname{op}}\, d\mu, \label{eq:average_top_lyap}
\end{equation}
where we used subadditivity in the second equality. We note that if $\mu$ is in fact $a$-ergodic then by the Kingman subadditive ergodic theorem we have a.e. pointwise convergence of $\frac{1}{n}\log\|\mathcal{C}(a^n,x)\|_{\operatorname{op}}$ to $\lambda_+(a,\mu,\mathcal{C})$ as $n\to \infty$, recovering part of \eqref{eq:expansion}.

From the definition \eqref{eq:average_top_lyap} it is clear that if $\mathcal{C}(a^n,\cdot)$ is continuous for all $n >0$ then $\mu \mapsto \lambda_+(a,\mu,\mathcal{C})$ is upper semicontinuous on the space of $a$-invariant Borel probability measures, equipped with the weak-$*$ topology. 

\subsection{Averaging measures over amenable groups}

Given a locally compact amenable group $H$ with Haar measure $m_H$, a bounded measurable set $F \subset H$ of positive Haar measure, and a probability measure $\mu$ on $X$, we define the probability measure $F*\mu$ via 
\begin{equation*}
    \int_X f \, d(F\ast\mu) := \frac{1}{m_H(F)}\int_F \int_{X}f(h^{-1}x) \,d\mu(x) \, d m_H(h)
\end{equation*}
for any $f\in C(X)$.

The following is \cite[Lemma 4.2]{brown_zimmers_2020}, which we state without proof.

\begin{lem}
\label{prop:Top_Lyapunov_amenable_averaging}
    For some $a\in G$ let $H \leq Z_G(a)$ be amenable with Haar measure $m_H$ and $F_k$ a F\o lner sequence of precompact sets in $H$. Suppose that $\mathcal{C}$ is a linear cocycle over $X$ such that $x \mapsto \mathcal{C}(a^n,x)$ is continuous for all $n>0$. Let $\mu'$ be any weak-$*$ accumulation point of the $F_k*\mu$. Then
    \begin{enumerate}[label = {(\alph*)},ref=(\alph*)]
        \item \label{prop:Top_Lyapunov_amenable_averaging:center_invariance} $\mu'$ is $a$-invariant and $H$-invariant;
        \item \label{prop:top_Lyapunov_amenable_averaging:Amenable_average} $\lambda_+(a,\mu',\mathcal{C}) \geq \lambda_+(a,\mu,\mathcal{C})$. 
    \end{enumerate}
\end{lem}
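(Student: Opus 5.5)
The plan is to prove both parts by comparing with averages of a single fixed continuous function, namely $f_n(x) := \log\|\mathcal{C}(a^n,x)\|_{\operatorname{op}}$. This function is continuous by hypothesis, and it is bounded because the cocycle is bounded and $X$ is compact. Throughout we pass to the subsequence $F_{k_j}\ast\mu \to \mu'$.

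For part (a), the key input is the F\o lner property. For $h_0\in H$ and $f\in C(X)$ one has
\[
\Bigl|\int f\circ h_0^{-1}\, d(F_k\ast\mu) - \int f\, d(F_k\ast\mu)\Bigr| \le \|f\|_\infty \frac{m_H(h_0F_k \,\triangle\, F_k)}{m_H(F_k)} \to 0.
\]
This is a change of variables, using the convention of translating by $h$ on the correct side; one chooses left or right F\o lner sets to match the definition of $F\ast\mu$. Passing to the limit gives $H$-invariance of $\mu'$. For $a$-invariance, first assume that $\mu$ itself is $a$-invariant; this is the situation in which the lemma is applied. Since $H\le Z_G(a)$, we have
\[
\int f(a^{-1}h^{-1}x)\,d\mu = \int f(h^{-1}a^{-1}x)\,d\mu = \int f(h^{-1}x)\,d\mu,
\]
so each $F_k\ast\mu$ is already $a$-invariant. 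Since $a$ acts continuously, this property persists under weak-$*$ limits.

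For part (b), fix $n$ and compute
\[
\int f_n \, d(F_k\ast\mu) = \frac{1}{m_H(F_k)}\int_{F_k}\int \log\|\mathcal{C}(a^n,h^{-1}x)\|\,d\mu(x)\,dm_H(h).
\]
The key step uses the cocycle identity together with commutation. From $a^n h^{-1} = h^{-1}a^n$ we get
\[
\mathcal{C}(a^n,h^{-1}x) = \mathcal{C}(h^{-1},a^nx)\,\mathcal{C}(a^n,x)\,\mathcal{C}(h^{-1},x)^{-1}.
\]
Taking norms, $\log\|\mathcal{C}(a^n,h^{-1}x)\|$ differs from $\log\|\mathcal{C}(a^n,x)\|$ by at most $2\sup_{y}\log\bigl(\|\mathcal{C}(h^{-1},y)\|\,\|\mathcal{C}(h^{-1},y)^{-1}\|\bigr)$. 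This bound depends on $h$, which may be unbounded along the F\o lner sets. This is the main obstacle, and I would not try to control it for fixed $n$.

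Instead I would use the $a$-invariance of the translated measures $h_*\mu$ together with subadditivity. For each $h$, the measure $h_*\mu$ is $a$-invariant and
\[
\lambda_+(a,h_*\mu,\mathcal{C}) = \lambda_+(a,\mu,\mathcal{C}).
\]
To see this, use the conjugation formula above with $n\to\infty$: the error term is bounded independently of $n$ for fixed $h$, so it vanishes after dividing by $n$. Combining this with $\lambda_+ = \inf_n$ gives, for each $h$,
\[
\frac1n\int f_n\, d(h_*\mu) \ge \lambda_+(a,\mu,\mathcal{C})
\]
for every $n$. Averaging over $h\in F_k$ yields
\[
\frac1n\int f_n\,d(F_k\ast\mu)\ge \lambda_+(a,\mu,\mathcal{C}).
\]
Letting $k\to\infty$ along the subsequence and using continuity of $f_n$ gives
\[
\frac1n\int f_n\,d\mu'\ge\lambda_+(a,\mu,\mathcal{C}),
\]
and taking the infimum over $n$ gives (b). The point is that "$\inf_n$ of an average is at least the average of the $\inf_n$'s" runs in the correct direction here, so the $h$-dependent error never needs to be uniform.
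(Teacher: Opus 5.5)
Your proof is correct and follows essentially the standard argument behind \cite[Lemma 4.2]{brown_zimmers_2020}, which the paper quotes without proof. That argument runs as follows: $H$-translates of the $a$-invariant measure $\mu$ (you rightly flag $a$-invariance as an implicit hypothesis) stay $a$-invariant with the same top exponent because $H$ commutes with $a$ and the cocycle is bounded on compacta, and upper semicontinuity of $\nu\mapsto\lambda_+(a,\nu,\mathcal{C})$, which you unpack directly through the $\inf_n$ formula, then gives (b). One small slip: the $n$-independent error bound should be $\sup_y\log\|\mathcal{C}(h^{-1},y)\|+\sup_y\log\|\mathcal{C}(h^{-1},y)^{-1}\|$, not twice the supremum of the log of the product, but any finite constant independent of $n$ suffices, so the argument is unaffected.
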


\subsection{The suspension construction}
\label{sect:Suspension_construction}

As indicated in Section~\ref{sect:proof_sketch} we extend the $\Gamma$-action on $M$ to an action of $G$ on the suspension space. In the real setting this space is naturally a compact manifold of dimension $\dim M + \dim G$. In out $p$-adic setting, it will only be a continuous bundle over $G/\Gamma$.

\myindent We first recall the standard construction of the suspension space $M^\alpha$. Consider the right $\Gamma$-action on $G\times M$ given by $(x,y)\cdot \gamma := (x\gamma,\alpha(\gamma^{-1})y)$ and the left $G$-action $g\cdot(x,y) = (gx,y)$. These commute, and so the $G$-action descends to one on $M^\alpha:= (G\times M)/\Gamma$. There is a natural $G$-equivariant projection $\pi:M^\alpha \to G/\Gamma$, giving $M^\alpha$ the structure of a bundle over $G/\Gamma$, where the fibers are isomorphic to $M$, in particular $M^\alpha$ is compact. We give a more useful description.

Fix a compact open fundamental domain $\mathcal{F}$ for $\Gamma$ in $G$ (which exists in the totally disconnected setting), and consider the associated \emph{return cocycle}
\begin{equation*}
    \mathcal{R}_\Gamma:G \times \mathcal{F} \to \Gamma : (g,x) \mapsto \gamma_x(g) \text{ where } gx \in \mathcal{F}\gamma_x(g).
\end{equation*} 
Then there is a Borel trivialisation $\Phi:M^\alpha \to \mathcal{F}\times M$, which is $G$-equivariant when the latter space is equipped with the left $G$-action
\begin{equation}
\label{eq:alpha_tilde}
    g\cdot(x,y) := (gx\gamma_x(g)^{-1}, \alpha(\gamma_x(g))y). 
\end{equation}
Since $\mathcal{F}$ is open, for any fixed $g\in G$ the return cocycle $x\mapsto \gamma_x(g)$ is continuous.

\myindent From now we will write $\gamma(y) $ for $\alpha(\gamma)y$ where convenient, and we redefine $M^\alpha$ to be the $G$-space $\mathcal{F}\times M$ with the corresponding projection $\pi:M^\alpha\to G/\Gamma$. Consider the section $s:G/\Gamma \to \mathcal{F}$, we will often write $\gamma_x(g)$ for $\gamma_{s(x)}(g)$; this should be clear from context.

\myindent We can now define the \emph{fiberwise derivative} cocycle 
\begin{align*}
    \mathcal{D}:G\times M^\alpha \to \GL(\dim M,\R), \ \mathcal{D}_{(x,y)}(g) := D_{y}\left(\alpha(\gamma_{x}(g))\right)
\end{align*}
We check that this is a cocycle. Firstly, $\mathcal{R}_\Gamma$ satisfies the cocycle identity; indeed if we take any $h,g\in G$ and $x\in \mathcal{F}$, notice that
\begin{equation*}
    hgx = h(gx\gamma_x(g)^{-1})\gamma_x(g) \in \mathcal{F}\gamma_{gx\gamma_x(g)^{-1}}(h)\gamma_x(g)
\end{equation*}
where we used that $gx\gamma_x(g)^{-1}\in \mathcal{F}$ by definition; and so in particular
\begin{equation}
    \label{eq:return_cocyle}
    \gamma_x(hg) = \gamma_{gx\gamma_x(g)^{-1}}(h)\gamma_x(g).
\end{equation}
We can now verify immediately that
\begin{align*}
    \mathcal{D}_{(x,y)}(hg) & = D_{y}\left(\gamma_{x}(hg)\right) \\
    & = D_y(\gamma_{gx\gamma_x(g)^{-1}}(h)\gamma_x(g)) \\
    & = D_{\gamma_x(g)y}(\gamma_{gx\gamma_x(g)^{-1}}(h))\circ D_y(\gamma_x(g)) \\
    & = \mathcal{D}_{g\cdot (x,y)}(h)\mathcal{D}_{(x,y)}(g)
\end{align*}
by the definition \eqref{eq:alpha_tilde} of $g\cdot (x,y)$.

    \section{Positive Lyapunov exponent and the averaging argument}
\label{sect:Averaging}

In this section we prove Propositions~\ref{prop:exists_positive_lyapunov}~and~\ref{prop:average_to_haar}. Recall that $G = \mathbb{G}(k)$ is an algebraic group defined over $k$, a non-Archimedean local field of characteristic 0, and $\Gamma \leq G$ is a (necessarily uniform) lattice. In particular $M^\alpha$ is compact, and we do not need to worry about escape of mass.

\subsection{Proof of Proposition~\ref{prop:exists_positive_lyapunov}}
\label{sect:positive_lyapunov}
\begin{defn}
    We say that the $G$-action on $M^\alpha$ has \emph{uniform subexponential growth of fiberwise derivatives} if for every $\varepsilon > 0$ there is a constant $C_\varepsilon$ such that for any $g\in G$ we have
    \begin{equation*}
        \sup_{(x,y)\in M^\alpha} \|\mathcal{D}_{(x,y)} g\| \leq C_\varepsilon \mathrm{e}^{\varepsilon d(e,g)} 
    \end{equation*}
\end{defn} 

Since $\Gamma \leq G$ is uniform one can see that $\alpha$ has uniform subexponential growth of derivatives if and only if the $G$-action has uniform subexponential growth of fiberwise derivatives.

\myindent Let $C>0$ be the constant in \eqref{eq:left_inv_metrics_are_qi} relating the word length metric on $A$ to the metric induced from $G$. Suppose that $\alpha$ fails to have subexponential growth of derivatives, so for some $\varepsilon >0$ there are sequences $g_\ell \in G$, $(x_\ell,y_\ell)\in M^\alpha$, and $v_\ell \in T_{y_\ell}^1 M$ such that
\begin{equation*}      
    \|\mathcal{D}_{(x_\ell,y_\ell)}g_\ell(v_\ell)\| \geq \mathrm{e}^{\varepsilon(C+2) d(e,g_\ell)}
\end{equation*}
where $T^1M$ is the unit tangent bundle of $M$ with respect to the background Riemannian metric.

\myindent Recall that $K$ is the maximal compact subgroup in the Cartan decomposition $G=KAK$. By compactness, $d(e,k)$ is bounded for $k\in K$ and $d(e,x)$ is bounded for $x\in \mathcal{F}$, and hence $d(e,kx)$ is bounded over $kx\in K\mathcal{F}$. In particular, $K\mathcal{F} $ intersects finitely many $\mathcal{F}\gamma$'s. Since in addition $M$ is compact and our $\Gamma$-action is assumed $C^1$, we have that $\|\mathcal{D}|_{K\times M^\alpha}\|$ is uniformly bounded. So writing each $g_\ell = k_\ell s_\ell k_\ell'$, we have that for large enough $\ell$,
\begin{equation}
\label{eq:exponential_growth}
    \|\mathcal{D}_{(x_\ell,y_\ell)}s_\ell(v_\ell) \| \geq \mathrm{e}^{\varepsilon(C+1) d(e,s_\ell)}
\end{equation}

We say that an $a \in A$ \emph{witnesses the failure of uniform subexponential growth in fiberwise derivatives} if we can take $s_\ell = a^{n_\ell}$ in \eqref{eq:exponential_growth}, for some sequence $(n_\ell)$ with $|n_\ell| \to \infty$. 

\begin{claim}
    Suppose that the $G$-action fails to have uniform subexponential growth of fiberwise derivatives, and let $\{a_1,\ldots,a_r\}$ be any $\Z$-basis for $A$. Then some $a_i$ witnesses the failure of uniform subexponential growth in fiberwise derivatives.
\end{claim}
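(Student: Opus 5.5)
Write $s_\ell = a_1^{k_{1,\ell}}\cdots a_r^{k_{r,\ell}}$ in the chosen basis. The plan is to use the cocycle identity to split $\mathcal{D}_{(x_\ell,y_\ell)}s_\ell$ into $r$ factors, each a power of a single $a_i$. Then compare the exponential growth in \eqref{eq:exponential_growth} with the quasi-isometry bound \eqref{eq:left_inv_metrics_are_qi}, and pigeonhole to find a factor that grows too fast. Since $A$ is abelian, the order of the factors is irrelevant.

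By the cocycle property we have
\begin{equation*}
\mathcal{D}_{z_\ell}(s_\ell) = \mathcal{D}_{z_\ell^{(r-1)}}\bigl(a_r^{k_{r,\ell}}\bigr)\cdots \mathcal{D}_{z_\ell^{(1)}}\bigl(a_2^{k_{2,\ell}}\bigr)\,\mathcal{D}_{z_\ell^{(0)}}\bigl(a_1^{k_{1,\ell}}\bigr),
\end{equation*}
where $z_\ell^{(0)} = z_\ell = (x_\ell,y_\ell)$ and $z_\ell^{(j)} = a_j^{k_{j,\ell}}\cdots a_1^{k_{1,\ell}}\cdot z_\ell$.

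Suppose for contradiction that no $a_i$ is a witness. Then for each $i$ there is a constant $B_i$ such that
\begin{equation*}
\sup_{z\in M^\alpha}\|\mathcal{D}_z(a_i^n)\| \le B_i\, \mathrm{e}^{\varepsilon(C+1) |n|\, d(e,a_i)/C}
\end{equation*}
for all $n$, at least after adjusting constants suitably. Here one should be careful about the correct normalisation: the definition of witness compares with $d(e,a_i^{n})$. I would rather phrase the negation via $d(e,a_i^n)$ and note that $d(e,a_i^n) \le |n| d(e,a_i)$ by the triangle inequality. Multiplying the $r$ bounds gives
\begin{equation*}
\|\mathcal{D}_{z_\ell}(s_\ell)\| \le B\, \mathrm{e}^{\varepsilon(C+1)\sum_i |k_{i,\ell}| d(e,a_i)/C}\le B'\,\mathrm{e}^{\varepsilon(C+1)\,d(e,s_\ell)}\,\mathrm{e}^{\varepsilon (C+1)}
\end{equation*}
by the right-hand inequality of \eqref{eq:left_inv_metrics_are_qi}.

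This alone does not contradict \eqref{eq:exponential_growth}, so the constants need adjusting. The cleaner route is the contrapositive with explicit exponents, which is why the paper chose the factor $C+2$ and then $C+1$. So I would define "witness" growth for $a_i$ to mean that $\|\mathcal{D}_{z}(a_i^{n})v\|\ge \mathrm{e}^{\varepsilon' |n| d(e,a_i)}$ along a sequence with $|n|\to\infty$, for a suitable $\varepsilon'$ such as $\varepsilon(C+1)/C$ or merely some positive $\varepsilon'$. Pigeonholing then gives the claim. Among the $r$ factors, since the product has norm at least $\mathrm{e}^{\varepsilon(C+1)d(e,s_\ell)}\ge \mathrm{e}^{\varepsilon\sum_i|k_{i,\ell}|d(e,a_i)}\cdot \mathrm{e}^{\varepsilon(d(e,s_\ell)-C)}$, at least one index $i_\ell$ satisfies $\|\mathcal{D}_{z^{(i_\ell-1)}_\ell}(a_{i_\ell}^{k_{i_\ell,\ell}})\|\ge \mathrm{e}^{\varepsilon|k_{i_\ell,\ell}|d(e,a_{i_\ell})}\mathrm{e}^{\varepsilon(d(e,s_\ell)-C)/r}$.

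Passing to a subsequence, take $i_\ell=i$ constant. The extra factor $\mathrm{e}^{\varepsilon(d(e,s_\ell)-C)/r}\to\infty$, because $d(e,s_\ell)\to\infty$; this follows from the growth in \eqref{eq:exponential_growth} and the boundedness of $\mathcal{D}$ on compact sets. Since $\mathcal{D}$ is uniformly bounded for bounded exponents, this forces $|k_{i,\ell}|\to\infty$. Choosing the unit vector $v$ realising the operator norm at $z^{(i-1)}_\ell$ yields a witness sequence for $a_i$.

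The main obstacle I expect is purely bookkeeping: matching the exponent constants in the definition of "witness" to what the pigeonhole produces, and ensuring $|n_\ell|\to\infty$. The structural content is just the cocycle identity together with \eqref{eq:left_inv_metrics_are_qi}.
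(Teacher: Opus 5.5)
Your argument is correct and is essentially the paper's. Both split $\mathcal{D}_z(s)$ by the cocycle identity into one factor per basis element and compare exponents via \eqref{eq:left_inv_metrics_are_qi}: the paper argues by contradiction (uniform rate-$\varepsilon$ bounds on all $a_i^m$ would give $\|\mathcal{D}_z s\|\le C''\mathrm{e}^{\varepsilon C d(e,s)}$, incompatible with \eqref{eq:exponential_growth}), while you pigeonhole directly along $s_\ell$. As you noticed, either route only yields witnesses at rate $\varepsilon$ rather than $\varepsilon(C+1)$, which is exactly the form used afterwards. One correction: $d(e,s_\ell)\to\infty$ comes from the unboundedness of $g_\ell$, which the failure hypothesis forces via boundedness of $\mathcal{D}$ on compacta. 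It does not follow from \eqref{eq:exponential_growth} itself, which $s_\ell=e$ satisfies trivially.
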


Indeed, suppose for contradiction that for our choice of $\varepsilon$ there exists some $C'>0$ such that for each $i$
\begin{equation*}
    \sup_{(x,y)\in M^\alpha}\|\mathcal{D}_{(x,y)} a_i^m\| \leq C' \mathrm{e}^{\varepsilon d(e,a_i^m)}.
\end{equation*}

Then for any $s = a_1^{m_1}\cdots a_r^{m_r}\in A$ and any $(x_0,y_0) \in M^\alpha$,
\begin{align*}
    \|\mathcal{D}_{(x_0,y_0)} s\| & \leq \sup_{(x,y) \in M^\alpha} \|\mathcal{D}_{(x,y)}a_1^{m_1}\|\ \cdots \sup_{(x,y) \in M^\alpha} \|\mathcal{D}_{(x,y)}a_{r}^{m_r}\| \\
    & \leq (C')^r \mathrm{e}^{\varepsilon (d(e,a_1^{m_1}) + \cdots + d(e,a_{r}^{m_r}))} \\
    & \leq C''\mathrm{e}^{\varepsilon Cd(e,s)}.
\end{align*}

This clearly contradicts \eqref{eq:exponential_growth} for large enough $d(e,s_\ell)$. 

\myindent Set $a=a_i$ to be the basis element that witnesses the failure of uniform subexponential growth of derivatives, by replacing $a$ with $a^{-1}$ if needed we can suppose that $n_\ell \to \infty$. That is, for some $\varepsilon >0$ we have that
\begin{equation*}
    \|\mathcal{D}_{(x_\ell,y_\ell)}a^{n_\ell}(v_\ell)\|\geq \mathrm{e}^{\varepsilon d(e,a^{n_\ell})}.
\end{equation*}
Once such an element exists, we use it to construct a measure with a positive top Lyapunov exponent. This is standard, for example see \cite[section 5]{brown_zimmers_2020}; we choose to include it for the readers' convenience.

\myindent Consider the space $T^1M^\alpha := \mathcal{F}\times T^1M$, this is a bundle over $M^\alpha$ where the fibers are identified with spheres of dimension $\dim M-1$. There is a natural way to induce a $G$-action on $T^1M^\alpha$, given by
\begin{equation*}
    T^1g(x,v) := \left(gx\gamma_x(g)^{-1}, \dfrac{\mathcal{D}_{(x,y)} g (v)}{\|\mathcal{D}_{(x,y)} g (v)\|}\right)
\end{equation*}
where it is understood that $y\in M$ is the basepoint of $v\in T^1M$. For each $\ell$, consider the Birkhoff measure on $T^1M^\alpha$ given by
\begin{equation*}
    \nu_\ell := \dfrac{1}{n_\ell}\sum_{j=0}^{n_\ell-1}\delta_{T^1a^j(x_\ell,v_\ell)}.  
\end{equation*}

We define, following the terminology in \cite{Cantat2017Bourbaki1136}, the \emph{logarithmic dilation} function of $a$ on $T^1M^\alpha$ via
\begin{equation*}
    \dil(x,v) := \log \|\mathcal{D}_{(x,y)} a(v)\|
\end{equation*}
and notice that since $\mathcal{D}$ is a cocycle, we have for any $n$ that
\begin{equation*}
    \log \|\mathcal{D}_{(x,y)} a^n(v)\| = \sum_{j=0}^{n-1}\dil(T^1 a^j(x,v)) 
\end{equation*}

By the construction of the $\nu_\ell$ we have that 
\begin{equation*}
    \int_{T^1M^\alpha} \dil(x,v) \, d\nu_\ell \geq \varepsilon.
\end{equation*}
Since $T^1M^\alpha$ is compact, we can take weak-$*$ limits.

\begin{claim}
    Any weak-$*$ limit $\nu$ of $(\nu_\ell)$ satisfies the following two properties.
    \begin{enumerate}[label = {(\alph*)}]
        \item $\nu$ is $T^1a$-invariant;
        \item $\int_{T^1M^\alpha} \dil(x,v) \, d\nu \geq \varepsilon > 0$.
    \end{enumerate}
\end{claim}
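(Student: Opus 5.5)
The plan is to use the standard Krylov–Bogolyubov argument. The two inputs are the continuity of the induced map $T^1a$ and the continuity of the logarithmic dilation function $\dil$ on the compact space $T^1M^\alpha=\mathcal{F}\times T^1M$. Once these are in place, (a) follows from the telescoping of Birkhoff sums and (b) from the definition of weak-$*$ convergence.

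\emph{Step 1 (continuity).} Since $\mathcal{F}$ is compact and open and $\Gamma$ is discrete, the return cocycle $x\mapsto \gamma_x(a)$ is locally constant on $\mathcal{F}$, as already noted after \eqref{eq:alpha_tilde}. By compactness, $\mathcal{F}$ splits into finitely many clopen pieces $\mathcal{F}_1,\dots,\mathcal{F}_N$ on each of which $\gamma_x(a)=\gamma_j$ is constant.
\begin{itemize}
\item On $\mathcal{F}_j\times T^1M$ the map $T^1a$ is $(x,v)\mapsto (ax\gamma_j^{-1}, D\alpha(\gamma_j)v/\|D\alpha(\gamma_j)v\|)$. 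This is continuous because $\alpha(\gamma_j)$ is a $C^1$-diffeomorphism, so $v\mapsto D\alpha(\gamma_j)v$ is continuous and nonvanishing on $T^1M$.
\item On the same piece, $\dil(x,v)=\log\|D\alpha(\gamma_j)v\|$ is continuous, and hence bounded.
\end{itemize}
So $T^1a$ and $\dil$ are continuous on all of $T^1M^\alpha$.

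\emph{Step 2 (normalising the lower bound).} I first make the bound $\int\dil\,d\nu_\ell\ge\varepsilon$ honest. By the cocycle identity,
\[
\int\dil\,d\nu_\ell=\tfrac{1}{n_\ell}\log\|\mathcal{D}_{(x_\ell,y_\ell)}a^{n_\ell}(v_\ell)\|\ge \tfrac{\varepsilon}{n_\ell}d(e,a^{n_\ell}).
\]
By \eqref{eq:left_inv_metrics_are_qi}, applied with $a$ as a basis element, $d(e,a^{n})\ge (n\,d(e,a)-C)/C$, and $d(e,a)>0$ because $a\neq e$. So for large $\ell$ the right-hand side is at least $\varepsilon':=\varepsilon d(e,a)/(2C)>0$. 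After renaming $\varepsilon'$ as $\varepsilon$, this is the inequality the paper records.

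\emph{Step 3 (the two properties).} Pass to a subsequence with $\nu_\ell\to\nu$ weak-$*$.

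For (a), take $f\in C(T^1M^\alpha)$. Then
\[
\int f\circ T^1a\,d\nu_\ell-\int f\,d\nu_\ell=\frac{1}{n_\ell}\Big(f\big(T^1a^{n_\ell}(x_\ell,v_\ell)\big)-f(x_\ell,v_\ell)\Big),
\]
which is bounded by $2\|f\|_\infty/n_\ell\to0$. Since $f\circ T^1a$ is continuous by Step 1, both integrals converge along the subsequence. Hence $\int f\circ T^1a\,d\nu=\int f\,d\nu$ for all $f$, so $\nu$ is $T^1a$-invariant.

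For (b), $\dil$ is continuous by Step 1, so $\int\dil\,d\nu=\lim_\ell\int\dil\,d\nu_\ell\ge\varepsilon$.

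The only genuine point to check is Step 1, namely that $T^1a$ and $\dil$ are continuous even though the suspension is only a Borel trivialisation. This is exactly where the compact open fundamental domain of the totally disconnected setting is used: it makes the return cocycle locally constant. Everything else is routine.
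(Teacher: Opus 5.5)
Your proof is correct and follows the paper's argument: the telescoping estimate $2\|f\|_\infty/n_\ell$ gives invariance, and testing weak-$*$ convergence against $\dil$ gives (b). Your Steps 1 and 2 make explicit what the paper leaves implicit. Step 1 gives continuity of $T^1a$ and $\dil$ from the locally constant return cocycle on the compact open $\mathcal{F}$. Step 2 reflects that the construction only yields $\int \dil\,d\nu_\ell \geq c\varepsilon$ for large $\ell$, with $c>0$ coming from the comparison of $d(e,a^n)$ with $n$, so the paper's literal $\geq \varepsilon$ needs exactly your renormalisation of the constant.
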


\begin{proof}
    If $\phi:T^1M^\alpha \to \R$ is any continuous test function, then 
    \begin{equation*}
        \lim_{\ell\to \infty} \Big|\int \phi \, d\nu_\ell - \int \phi \circ T^1a \, d\nu_\ell \Big|\leq \lim_{\ell \to \infty} \dfrac{2\|\phi\|_{C^0}}{n_\ell} = 0
    \end{equation*}
    and so (a) follows. Conclusion (b) follows from the definition of weak-$*$ convergence.
\end{proof}

We may readily replace $\nu$ by an $a$-ergodic component, which we will also denote $\nu$, that satisfies the inequality in (b). By the Birkhoff Ergodic Theorem we have for $\nu$-a.e. $(x,v) \in T^1M^\alpha$ that
\begin{align*}
    \lim_{n\to \infty}\frac{1}{n}\log \|\mathcal{D}_{(x,y)}a^n(v)\| = \lim_{n\to \infty} \frac{1}{n}\sum_{j=0}^{n-1} \dil(T^1a^j(x,v))  = \int \dil \, d\nu \geq \varepsilon.
\end{align*}
Let $\mu = P_*\nu$, where $P:T^1M^\alpha \to M^\alpha$ is the natural projection. Then $\mu$ is an $a$-invariant, $a$-ergodic measure on $M^\alpha$. Consider also the family of conditional probability measures $\nu_{(x,y)}$ of $\nu$ for the partition into fibers of $P$.

\begin{proof}[Proof of Proposition~\ref{prop:exists_positive_lyapunov}]
    We can calculate
\begin{align*}
    \lambda_{+}(a,\mu,\mathcal{D}) & = \lim_{n\to \infty}\dfrac{1}{n}\int\log \|\mathcal{D}_{(x,y)} a^n\|_{\operatorname{op}} \, d\mu(x,y) \\
    & = \lim_{n\to \infty}\dfrac{1}{n}\int\sup_{v\in T^1_yM}\log \|\mathcal{D}_{(x,y)} a^n(v)\| \, d\mu(x,y) \\
    & =  \lim_{n\to \infty}\dfrac{1}{n}\int \sup_{v \in T^1_yM}\sum_{j=0}^{n-1} \dil (T^1a^j(x,v)) \, d\mu(x,y) \\
    & \geq \lim_{n\to \infty}\int \left(\int\dfrac{1}{n} \sum_{j=0}^{n-1} \dil (T^1a^j(x,v)) \, d\nu_{(x,y)}(v)\right) \, d\mu(x,y) \\
    & = \int \dil(x,v) \, d\nu \geq \varepsilon > 0.
\end{align*}

Hence there is a measure $\mu$ with a strictly positive Lyapunov exponent for $a$. The result follows by averaging with respect to a F\o lner sequence in $A$ using Lemma~\ref{prop:Top_Lyapunov_amenable_averaging}, and passing to an ergodic component.
\end{proof}

\subsection{Facts from Unipotent Dynamics}

We recall some facts from the dynamics of unipotent flows that will be vital for us, starting with some definitions. The image $U$ of a homomorphism $u:k \to G$ is a \emph{one-parameter unipotent subgroup} if every element in its image is unipotent; that is, its only eigenvalue is 1.

\begin{defn}
    A $U$-invariant measure $\mu$ on $G/\Gamma$ is \emph{homogeneous} if there exists some closed subgroup $U\leq L\leq G$ such that $\mu$ is $L$-invariant, and a point $x\in G/\Gamma$ such that $L\cdot x$ is closed and supports $\mu$. Such a measure will be denoted by $m_L$. Similarly, a $U$-invariant subset $B\subset G/\Gamma$ is \emph{homogeneous} if it is closed and of the form $L\cdot x$ for a closed subgroup $U\leq L\leq G$.
\end{defn}

For any $T>0$ we consider the subset of $k$ given by
\begin{equation*}
    F_k(T) := \{x\in k \mid |x|_k \leq T\}.
\end{equation*}
As $T \to \infty$ these form a F\o lner sequence in $k$ of compact-open subgroups. 

\begin{defn} \label{defn:generic}
    A point $x \in G/\Gamma$ is \emph{generic (for $U$)} if there is a closed subgroup $U\leq L \leq G$ such that $\overline{U\cdot x} = L\cdot x$ is homogeneous, and
    \begin{equation*}
        \frac{1}{m_k(F_k(T))}\int_{F_k(T)}f(u(t)x) \, dm_k(t) \longrightarrow \int_{G/\Gamma} f \, dm_L \quad \text{as } T\to \infty.
    \end{equation*}
    Recall that $m_k$ is the Haar measure on $k$, and $m_L$ the homogeneous measure on $L\cdot x$.
\end{defn}

In general the group $L$ and the measure $m_L$ depend on both $x$ and $U$. For a point $x$ that is generic for $U$ we will denote the measure that arises in the definition by $m_x$, with the $U$-dependence kept implicit. We now state some cases of Ratner's Theorems. For groups over non-Archimedean local fields of characteristic 0 they are proved independently in \cite{ratner_raghunathans_1995} and \cite{margulis_invariant_1994}, both building on the methods developed in \cite{Ratner1, Ratner2, Ratner3}.

\begin{theo}[Ratner's Theorems]
\label{theo:Ratner}
    Let $U$ be a one-parameter unipotent subgroup. Then
    \begin{enumerate}[label=(\alph*),ref=(\alph*)]
        \item Every $U$-invariant and $U$-ergodic probability measure $\mu$ is homogeneous (\cite[Theorem 1]{ratner_raghunathans_1995}, \cite[Theorem 2]{margulis_invariant_1994}). 
        \item For every $x\in G/\Gamma$ the orbit closure $\overline{\{u \cdot x :u\in U\}}$ is homogeneous (\cite[Theorem 2]{ratner_raghunathans_1995}, \cite[Theorem 11.1]{margulis_invariant_1994}).
        \item \label{theo:Ratner:equidistribution} Every point in $G/\Gamma$ is generic for $U$ (\cite[Theorem 3]{ratner_raghunathans_1995}, \cite[Theorem 11.2]{margulis_invariant_1994}).
        \end{enumerate}
\end{theo}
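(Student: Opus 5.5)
Since this statement collects known results, I would not reprove Ratner's Theorems. Instead I would check that our setting satisfies the hypotheses of \cite{ratner_raghunathans_1995} and \cite{margulis_invariant_1994}, and that their conclusions match the formulations (a)--(c) above, in particular Definition~\ref{defn:generic}.

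\emph{Hypotheses.} First, $G=\mathbb{G}(k)$ with $k$ a finite extension of $\Qp$ is a $p$-adic analytic group, and $\Gamma$ is a (uniform) lattice in it. This is exactly the setting of Ratner's $p$-adic paper, and also of Margulis--Tomanov, who treat $k$-points of algebraic groups over local fields of characteristic $0$. Second, our one-parameter unipotent subgroup $U=u(k)$ is the image of a homomorphism $u:k\to G$ consisting of unipotent elements. I would take the Zariski closure of $U$, which is a $k$-split unipotent group of dimension one. Then I would use that in characteristic $0$ the exponential map identifies it with $\exp(tX)$ for a nilpotent $X\in\mathfrak g$. This matches the ``$\Ad$-unipotent one-parameter subgroups'' (equivalently, one-parameter subgroups generated by unipotent elements) used in both references.

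\emph{Parts (a) and (b).} With the hypotheses checked, these are verbatim the measure classification and orbit-closure theorems. The only detail is that the subgroup $L$ produced there contains $U$ and that $L\cdot x$ is closed. For the orbit closure I would take $L$ to be the closed subgroup from the orbit-closure theorem; for the measure I would take $L$ to be the stabiliser of $\mu$, which is closed and contains $U$.

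\emph{Part (c), the equidistribution statement.} I expect this to be the main point to verify. The references state uniform distribution along the averaging sets $\{u(t):|t|_k\le T\}$, or along sequences of balls $F_k(T)$. These are exactly our $F_k(T)$, since they are the Haar-measure balls in $k$ and form a F\o lner sequence of compact-open subgroups. The limit measure is the $L$-invariant probability measure on $\overline{U x}=L x$. Because $\Gamma$ is uniform, $G/\Gamma$ is compact, so there is no escape of mass and the convergence holds against all $f\in C(G/\Gamma)$, as required. I would double-check whether the reference proves convergence only along a subsequence $T_n\to\infty$. If so, I would upgrade to the full limit by noting that every weak-$*$ limit of the averages is $U$-invariant and, by the same theorem, equals $m_L$; this gives convergence of the whole family.
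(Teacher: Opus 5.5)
Your approach matches the paper's: the paper does not prove this statement but cites Ratner and Margulis--Tomanov, so the only content is the hypothesis check you describe. Two parts of that check need correcting.

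\textbf{The fallback in (c).} The fallback you sketch would not work. Suppose a reference gave convergence only along some sequence $T_n$. A weak-$*$ limit $\nu$ along a different sequence is $U$-invariant and supported on $\overline{Ux}=Lx$, but it need not be $U$-ergodic. Part (a) only tells you that its ergodic components are homogeneous. In general $Lx$ carries other $U$-invariant measures, namely $m_F$ on closed orbits $Fy\subset Lx$ of intermediate subgroups $U\le F\lneq L$. Showing that $\nu$ gives these no mass is exactly the non-divergence/linearization content of the equidistribution theorem, and it does not follow from measure classification.

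Fortunately nothing needs upgrading. Since $|\cdot|_k$ takes a discrete set of values, $F_k(T)$ only runs through the balls $\varpi^{-n}\mathcal{O}_k$. So the limit as $T\to\infty$ is just the limit along this sequence of balls, which is what the cited statements provide. Similarly, the uniqueness of the weak-$*$ limit in \eqref{eq:dominated_convergence} comes from convergence at \emph{every} point plus dominated convergence, not from $U$-invariance.

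\textbf{The identification of $U$.} Writing $U=\{\exp(tX):t\in k\}$ with one-dimensional Zariski closure presupposes that $u$ is $k$-rational, as for the root-group parametrisations $U^i_\beta$ used later. A merely continuous homomorphism $k\to G$ is only $\mathbb{Q}_p$-analytic and need not have this form. Also, when $k\neq\mathbb{Q}_p$ the match with Ratner's paper is not verbatim. There the groups are Lie groups over $\mathbb{Q}_p$ with one-parameter subgroups parametrised by $\mathbb{Q}_p$, so $u(k)$ is a $[k:\mathbb{Q}_p]$-parameter group. This is harmless for (a) and (b), which are stated for groups generated by such subgroups. For (c), however, you need an equidistribution statement for subgroups parametrised by $k$ itself, as in the local-field setting of Margulis--Tomanov.
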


The following is a special case of \cite[Corollary 2]{ratner_raghunathans_1995}, applied to one-parameter unipotent subgroups that generate the root subgroups.

\begin{cor}
\label{cor:Ratner_symmetry_of_entropy} 
Let $\beta \in \Sigma$ be a relative root, and let $\mu$ be a measure on $G/\Gamma$ invariant under $\langle U_\beta,A\rangle$. Then $\mu$ is invariant under $U_{-\beta}$. 
\end{cor}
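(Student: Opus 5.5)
The plan is to derive the corollary from the $p$-adic version of the entropy/symmetry statement in \cite[Corollary 2]{ratner_raghunathans_1995}. The first step reduces to ergodic components. Since $\mu$ is $\langle U_\beta, A\rangle$-invariant, we decompose $\mu$ into $\langle U_\beta,A\rangle$-ergodic components. Invariance under $U_{-\beta}$ is preserved under integration of components, so it suffices to treat ergodic $\mu$. The group $\langle U_\beta, A\rangle = A\ltimes U_\beta$ is amenable, so the decomposition is available.

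The second step writes $U_\beta$ as generated by one-parameter unipotent subgroups. By Proposition~\ref{prop:root_facts}(iv), $U_\beta$ is a $k$-defined connected unipotent group with Lie algebra $\mathfrak{g}_\beta$, or $\mathfrak{g}_\beta\oplus\mathfrak{g}_{2\beta}$ in the $BC_n$ case. Choosing a $k$-basis $X_1,\dots,X_m$ of this Lie algebra, with each $X_i$ in a single root space, gives one-parameter subgroups $u_i(t)=\exp(tX_i)$. These are well defined since $\operatorname{ad} X_i$ is nilpotent in characteristic $0$, and they generate $U_\beta$.

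The third step is the central one. Pick $a\in A$ with $\beta(a)>0$, so that $a$ expands $U_\beta$ under conjugation and contracts $U_{-\beta}$. We then apply \cite[Corollary 2]{ratner_raghunathans_1995}. In the form needed here, that result says that if a measure is invariant under a one-parameter unipotent $U=\{u(t)\}$ normalized by $a$, together with $a$, then it is invariant under the opposite group $\{\exp(tY)\}$, where $(X,h,Y)$ is an $\mathfrak{sl}_2$-triple with $X$ generating $U$ and $h$ compatible with $a$. By Jacobson--Morozov over $k$, each $X_i\in\mathfrak{g}_\beta$ (or $\mathfrak{g}_{2\beta}$) completes to a triple whose $Y_i$ lies in $\mathfrak{g}_{-\beta}$ (respectively $\mathfrak{g}_{-2\beta}$). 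The dynamical input behind this is the symmetry of entropy: the entropy of $a$ relative to $U_\beta$ (from Ledrappier--Young/Margulis--Tomanov type formulas) equals that of $a^{-1}$ along the opposite direction. We apply this with each $X_i$, and with suitable elements of $A$, possibly after passing to $a^{-1}$.

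The fourth step assembles the pieces. The $Y_i$ obtained this way span $\mathfrak{g}_{-\beta}$ (plus $\mathfrak{g}_{-2\beta}$). This holds because Jacobson--Morozov is compatible with the grading, and the map $X\mapsto Y$ can be arranged via a Chevalley-type involution that swaps $\mathfrak{g}_\beta$ and $\mathfrak{g}_{-\beta}$. The corresponding one-parameter groups therefore generate $U_{-\beta}$, which gives the invariance.

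I expect the main obstacle to be matching the hypotheses of \cite[Corollary 2]{ratner_raghunathans_1995} exactly. Specifically, we must ensure that the chosen $a\in A$ normalizes each $u_i$ with the correct scaling, and that the opposite subgroups produced really exhaust $U_{-\beta}$ when $\dim\mathfrak{g}_\beta>1$ or when $\Sigma$ is non-reduced. If direct generation fails, the fallback is to use that the set of $g$ preserving $\mu$ is a closed subgroup. That subgroup contains $A$, $U_\beta$ and some nontrivial elements of $U_{-\beta}$. Conjugating by the compact part of $Z_G(A)$, which acts irreducibly enough on $\mathfrak{g}_{-\beta}$, would then recover the whole of $U_{-\beta}$.
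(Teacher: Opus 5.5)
Your route is the paper's: the paper proves this corollary in one line, as a special case of Ratner's Corollary~2 applied to one-parameter unipotent subgroups generating the root groups, and your Steps 1--3 unpack exactly that. The gap is in Step~4. You take an \emph{arbitrary} $k$-basis $X_1,\dots,X_m$ of $\mathfrak g_\beta$ and assert that the partners $Y_i$ span $\mathfrak g_{-\beta}$, because $X\mapsto Y$ ``can be arranged via a Chevalley-type involution''. This assertion fails in general.

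Once $h$ is required to lie in the Lie algebra of the split torus, which it must if $A$-invariance is to be usable, $Y$ is uniquely determined by $X$. The map $X\mapsto Y(X)$ is then in general not linear up to scalars. Take $G=\mathrm{SL}_3(D)$ with $D$ a central division algebra of degree $3$ over $k$ (a simple group of $k$-rank $2$), and $\beta=e_1-e_2$. Then $\mathfrak g_{\pm\beta}\cong D$ and $h=\mathrm{diag}(1,-1,0)$. The triple through $xE_{12}$ must have $Y=x^{-1}E_{21}$, where $x^{-1}=x^\#/\operatorname{Nrd}(x)$ and $x^\#$ is quadratic in $x$.

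For any hyperplane $W\subset D$, the set $\{w^{-1}:w\in W\setminus\{0\}\}$ spans $D$. Otherwise the quadratic form $w\mapsto\ell'(w^\#)$ would vanish on $W$ and hence have rank at most $2$; but over $\bar k$ it equals $\operatorname{tr}(L\operatorname{adj}w)$ with $L\neq 0$, which has rank at least $4$. So there are bases $\{x_i\}$ of $D$ with every $x_i^{-1}\in W$. For such a basis the groups $\exp(kY_i)$ generate only $\{I+wE_{21}: w\in W\}$, a proper subgroup of $U_{-\beta}$.

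Your fallback does not repair this as stated. Conjugating by the compact part of $Z_G(A)$ stays inside the stabiliser of $\mu$ only if $\mu$ is invariant under that compact group, and this is not assumed.

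The fix uses something you have but did not exploit: $\mu$ is invariant under \emph{all} of $U_\beta$, so you need not start from a basis of $\mathfrak g_\beta$.
First make ``$h$ compatible with $a$'' precise.
\begin{itemize}
\item Graded Jacobson--Morozov gives $h\in\mathfrak g_0=\operatorname{Lie}(Z_G(A))$ and $Y\in\mathfrak g_{-\beta}$.
\item $h$ spans the Lie algebra of a one-dimensional $k$-split torus centralising the maximal $k$-split torus $\mathbb S$, so this torus lies in $\mathbb S$.
\item For $H\in\ker\beta$, invariance of the Killing form gives $B(H,h)=B([H,X],Y)=0$. This forces $h=H_\beta$, the coroot, so the diagonal of the associated $\mathrm{SL}_2$ lies in the split torus containing $A$, as Ratner's statement needs.
\end{itemize}
With $h=H_\beta$ fixed, $X\mapsto Y(X)$ is a bijection $\mathfrak g_\beta\setminus\{0\}\to\mathfrak g_{-\beta}\setminus\{0\}$; to see this, run the same argument starting from $Y$ with $-H_\beta$.

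Now start from a basis $Y_1,\dots,Y_m$ of $\mathfrak g_{-\beta}$ (and of $\mathfrak g_{-2\beta}$ if $2\beta\in\Sigma$). Take their partners $X(Y_j)\in\mathfrak g_\beta$ (respectively $\mathfrak g_{2\beta}$). Then $\mu$ is $\exp(kX(Y_j))$-invariant because it is $U_\beta$-invariant and $U_{2\beta}\subset U_\beta$. Applying Ratner's corollary to each $j$ gives invariance under each $\exp(kY_j)$, and these groups generate $U_{-\beta}$.
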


Following \cite{brown_zimmers_2020}, for any measure $\mu$ on $G/\Gamma$ we define the measure
\begin{equation*}
    U \ast \mu := \int m_{x} \, d\mu(x).
\end{equation*}
We remark that by dominated convergence we have that
\begin{equation}
\label{eq:dominated_convergence}
    F_k(T) \ast \mu \longrightarrow U \ast \mu \quad \text{as } T\to \infty.
\end{equation}
We stress that in particular $U\ast\mu$ is the only weak-$*$ limit of the $F_k(T)\ast\mu$ as $T \to \infty$. Using this observation, it is possible to show that $U\ast\mu$ maintains properties of $\mu$ with respect to an $A$ that normalises it (as will always be the case with root subgroups by definition), which we recall below. This is \cite[Proposition 6.2]{brown_zimmers_2020}, we omit the proof as it is essentially identical.

\begin{prop}
\label{prop:averaging_facts}
    Let $U$ be a unipotent one-parameter subgroup normalised by $A$ and  $\mu$ a Borel probability measure on $G/\Gamma$. Then
    \begin{itemize}
        \item if $\mu$ is $A$-invariant then $U\ast \mu$ is $\langle A,U\rangle$-invariant;
        \item if $\mu$ is $A$-invariant and $A$-ergodic then $U\ast \mu$ is $A$-ergodic. 
    \end{itemize} 
\end{prop}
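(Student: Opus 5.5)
The plan is to follow \cite[Proposition 6.2]{brown_zimmers_2020}, using \eqref{eq:dominated_convergence} and the fact that the map $\mu\mapsto U\ast\mu$ is affine and compatible with conjugation by $A$. Throughout, write $U=\{u(t):t\in k\}$. Since $A$ normalises $U$, for each $a\in A$ there is a character $\chi$ with $a u(t) a^{-1} = u(\chi(a)t)$. This holds after possibly replacing $u$ by a reparametrisation, because $A$ acts on the one-parameter group through $\Ad$ on a root space, and for a one-parameter subgroup inside a root group $U_\beta$ the conjugation is scalar multiplication by $\chi_\beta(a)$.

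For the first bullet, suppose $\mu$ is $A$-invariant. $U$-invariance of $U\ast\mu$ is immediate, since each $m_x$ is $L_x$-invariant with $U\le L_x$; equivalently, $F_k(T)\ast\mu$ is asymptotically $U$-invariant because the $F_k(T)$ form a F\o lner sequence. For $A$-invariance, compute
\[
a_*\big(F_k(T)\ast\mu\big) = F_k\big(|\chi(a)|_k T\big)\ast (a_*\mu) = F_k\big(|\chi(a)|_k T\big)\ast\mu .
\]
The first equality is the change of variables $t\mapsto\chi(a)t$ together with $dm_k(\chi(a)t)=|\chi(a)|_k\,dm_k(t)$, which takes the normalised average over $F_k(T)$ to the normalised average over the rescaled ball. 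Letting $T\to\infty$ and using \eqref{eq:dominated_convergence} on both sides, together with continuity of $a_*$ in the weak-$*$ topology, gives $a_*(U\ast\mu)=U\ast\mu$. Hence $U\ast\mu$ is $\langle A,U\rangle$-invariant.

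For ergodicity, suppose $\mu$ is $A$-ergodic and that $U\ast\mu = s\nu_1+(1-s)\nu_2$ with $\nu_1,\nu_2$ $A$-invariant and $0<s<1$. It suffices to show $\nu_1=\nu_2$. The approach is to show that the map $x\mapsto m_x$ is $A$-equivariant, that is $m_{ax}=a_*m_x$ for all $x$. This follows from the same rescaling computation applied to $\delta_x$, using genericity of every point (Theorem~\ref{theo:Ratner}\ref{theo:Ratner:equidistribution}). Consequently $\Psi:x\mapsto m_x$ is a measurable $A$-equivariant map into the space of probability measures, and $U\ast\mu$ is the barycenter of $\Psi_*\mu$, an $A$-ergodic measure on measures.

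The main obstacle is to transfer ergodicity through the barycenter: a barycenter of an ergodic measure on measures need not be ergodic in general. The plan is to use the $U$-invariance and ergodic decomposition as in \cite{brown_zimmers_2020}. Each $m_x$ is $U$-ergodic, being the homogeneous ergodic measure from Ratner's theorem. Take an $A$-invariant Borel set $E$, and consider $f(x)=m_x(E)$. By equivariance $f$ is $A$-invariant, so by ergodicity of $\mu$ it is a.e.\ constant, say equal to $c$. Since $E$ is $A$-invariant and $U\ast\mu$ is $\langle A,U\rangle$-invariant, a Mautner-type argument (conjugating $U$ by an element of $A$ that contracts it) shows that $\mathbf 1_E$ is $U$-invariant modulo $U\ast\mu$-null sets. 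Then $U$-ergodicity of the $m_x$ forces $m_x(E)\in\{0,1\}$, so $c\in\{0,1\}$ and $(U\ast\mu)(E)=c\in\{0,1\}$. The null-set bookkeeping needed to pass from $U\ast\mu$ to almost every $m_x$ is routine.
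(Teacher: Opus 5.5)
Your proposal is correct and is essentially the argument the paper defers to in \cite[Proposition 6.2]{brown_zimmers_2020}: you get $A$-invariance by rescaling the balls $F_k(T)$ (equivalently, the equivariance $m_{ax}=a_*m_x$), and $A$-ergodicity from the Mautner phenomenon together with $U$-ergodicity of the Ratner measures $m_x$. Two inputs should be cited explicitly: $U$-ergodicity of $m_x$ comes from the full form of Ratner's theorem and is not part of Definition~\ref{defn:generic} as stated, and the Mautner step needs some $a\in A$ with $|\chi(a)|_k\neq 1$, which holds because $U$ lies in a root group $U_\beta$ with $\beta\neq 0$.
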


\subsection{Averaging argument: first step}

Now we explain how to average measures on $G/\Gamma$ to obtain some extra invariance; this will go some ways to proving Proposition~\ref{prop:average_to_haar}. Let $\preccurlyeq$ be an order on $\mathfrak{a}^*$, $\Sigma_+$ the set of positive roots for $\preccurlyeq$, and $\Delta\subset\Sigma$ the corresponding basis. For any subset $\Delta' \subset \Delta$ we consider the subgroup $U_{\Delta'} $ generated by $\{U_\alpha:\alpha\in \Delta'\}$. This is a unipotent subgroup of $P = P_\Delta$, the Borel subgroup corresponding to $\Delta$.

Let $\Sigma'$ be the root system generated by $\Delta'$, we can order its $\preccurlyeq$-positive elements as $\Sigma'_+ = \{\beta_1,\ldots,\beta_\ell\}$, where $\beta_i\preccurlyeq \beta_j$ whenever $i\leq j$. 

Define the height function 
\begin{equation*}
    h:\Sigma_+'\to \N_0:\sum_{\alpha\in \Delta'}c_\alpha \alpha \mapsto \sum_{\alpha\in \Delta'} c_\alpha.
\end{equation*}
Observe that if $h(\beta) \leq h(\beta')$ then certainly $\beta\preccurlyeq \beta'$. The \emph{height filtration} $\{U_r\}$ for $r \geq 0$ by $U_r := \langle U_\beta:\beta\in \Sigma', h(\beta) \geq r\rangle$, where $\Sigma_r' := \{\beta\in \Sigma_+'\mid h(\beta)\geq r\}$. We have (by Proposition~\ref{prop:root_facts}~\ref{prop:root_facts:commutator_relation}) that
\begin{equation} \label{eq:derived_series}
    [U_r,U_r] \subset U_{2r}.
\end{equation}

We make the following observation.
\begin{lem}
\label{lem:commutator_invariance}
    Let $U,V$ be one-parameter unipotent subgroups, and $\nu$ a measure on $G/\Gamma$ that is both $V$ and $[U,V]$-invariant. Then $U*\nu$ is $U,V$-invariant.
\end{lem}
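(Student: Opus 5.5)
We must show $U*\nu$ is $U$-invariant and $V$-invariant. The first part is immediate: $U*\nu=\int m_x\,d\nu(x)$, and each $m_x$ is the homogeneous measure on $\overline{U\cdot x}=L_x\cdot x$ with $U\le L_x$, so each $m_x$ is $U$-invariant. (Equivalently, $U*\nu$ is the unique weak-$*$ limit of $F_k(T)*\nu$ by \eqref{eq:dominated_convergence}, and F\o lner averages are asymptotically $U$-invariant.) The real content is $V$-invariance.

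For $V$-invariance I would work with the finite averages $F_k(T)*\nu$ and pass to the limit. Fix $v\in V$ and $f\in C(G/\Gamma)$. Write
\begin{equation*}
\int f(v\,x)\,d(F_k(T)*\nu)(x)=\frac{1}{m_k(F_k(T))}\int_{F_k(T)}\int f(v u(t)^{-1}x)\,d\nu(x)\,dm_k(t).
\end{equation*}
Rewrite $v u(t)^{-1}=u(t)^{-1}\,\bigl(u(t)vu(t)^{-1}v^{-1}\bigr)\,v$. The middle factor $c(t)=[u(t),v]$ lies in $[U,V]$, and the rightmost $v$ is absorbed by $V$-invariance of $\nu$. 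The intermediate task is therefore to show that $\int f(u(t)^{-1}c(t)x)\,d\nu(x)=\int f(u(t)^{-1}x)\,d\nu(x)$, i.e.\ that $\nu$ is invariant under $c(t)$. This follows since $c(t)\in[U,V]$ and $\nu$ is $[U,V]$-invariant. Here $[U,V]$ should be read as the group generated by commutators; if it only means the set of commutators $\{[u,v]\}$, invariance under each element is exactly what is used. The order of operations needs care. After substituting, we get $\int f(u(t)^{-1}c(t)v\,x)\,d\nu$. Applying $V$-invariance first gives $\int f(u(t)^{-1}c(t)x)\,d\nu$, and then $[U,V]$-invariance gives $\int f(u(t)^{-1}x)\,d\nu$. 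So $\int f\circ v\,d(F_k(T)*\nu)=\int f\,d(F_k(T)*\nu)$ exactly, for every $T$.

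Letting $T\to\infty$ and using \eqref{eq:dominated_convergence}, $F_k(T)*\nu\to U*\nu$ weak-$*$. Since $f\circ v$ is again continuous, the identity passes to the limit, and $U*\nu$ is $v$-invariant for every $v\in V$.

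The main point to check is the group-theoretic identity and the convention for $[U,V]$. If $[U,V]$ is meant as the subgroup generated by commutators, everything is fine. Otherwise the argument still only uses invariance under the individual commutators $u(t)vu(t)^{-1}v^{-1}$, which are all in the stated set. No use of Ratner beyond the convergence \eqref{eq:dominated_convergence} is needed, and even that can be replaced by taking any weak-$*$ limit, since $U*\nu$ is the unique one.
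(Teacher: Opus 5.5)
Your proposal is correct and follows essentially the same route as the paper. The paper's proof is the one-line identity $v_*(u_*\nu)=u_*v_*([v^{-1},u^{-1}]_*\nu)=u_*\nu$, which makes each translate $u_*\nu$, and hence each average $F_k(T)\ast\nu$, $V$-invariant, followed by passing to the weak-$*$ limit; you do exactly this inside the integral with $c(t)=[u(t),v]$, and additionally justify $U$-invariance (via the $U$-invariance of the homogeneous measures $m_x$), a step the paper leaves implicit.
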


In the case that $[U,V] = 1$ this is part of Lemma~\ref{prop:Top_Lyapunov_amenable_averaging}~\ref{prop:Top_Lyapunov_amenable_averaging:center_invariance}. 

\begin{proof}
    Let $u\in U$, and consider any $v\in V$. Then
\begin{equation*}
    v_*(u_*\nu) = u_*v_*([v^{-1},u^{-1}]_*\nu) = u_*\nu.
\end{equation*}
Thus $u_*\nu$ remains $V$-invariant, hence so is $F_k(T)\ast\nu$. Passing to the limit, the result follows.
\end{proof}

\myindent This allows us to make the following definition. Let $\beta \in \Sigma$ be a root, and pick one dimensional unipotent subgroups $U_\beta^1, \ldots, U_\beta^s$ that generate $U_\beta$. Let $\nu$ be an $A$-invariant and $A$-ergodic measure; if $2\beta \in \Sigma$ then assume in addition that $\nu$ is $U_{2\beta}$-invariant. Then define 
\begin{equation*}
    U_\beta * \nu := U_\beta^1*\cdots \ast U_\beta^s *\nu.
\end{equation*}
By Lemma~\ref{lem:commutator_invariance} and Proposition~\ref{prop:averaging_facts} applied repeatedly it follows that $U_\beta\ast\nu$ is invariant under $\langle A, U_\beta\rangle$, and $A$-ergodic. The $U_\beta^i$ we pick and the order we pick them in will be of no material consequence.

\begin{lem}
\label{lem:solvalble_averaging}
    Let $\nu$ be an $A$-invariant and ergodic measure on $G/\Gamma$. Let $\Delta' \subset \Delta$ be a subset of a basis of $\Sigma$, with corresponding height function $h:\Sigma'_+ \to \N$. Define recursively the measures
    \begin{equation*}
        \nu_{\ell} = U_{\beta_\ell} * \nu \quad \text{and} \quad \nu_k = U_{\beta_k} * \nu_{k+1} \text{ for } k = \ell-1,\ldots,1.
    \end{equation*}
    Then $\nu_1$ is $\langle A, U_{\Delta'}\rangle $-invariant and $A$-ergodic.
\end{lem}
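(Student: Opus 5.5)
The plan is a downward induction on the index $k$, keeping track of invariance under a growing unipotent group. For $1\le k\le \ell$ set
\begin{equation*}
N_k := \langle U_{\beta_j} : j \ge k\rangle .
\end{equation*}
I will prove that $\nu_k$ is $A$-invariant, $A$-ergodic and $N_k$-invariant. For $k=1$ this gives invariance under $U_\beta$ for every $\beta\in\Sigma'_+$. In particular $\nu_1$ is invariant under $U_\alpha$ for each $\alpha\in\Delta'$, hence under $\langle A, U_{\Delta'}\rangle$.

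\emph{Structural input.} I first record two facts from Proposition~\ref{prop:root_facts}. First, $\Sigma' = \Sigma\cap \operatorname{span}(\Delta')$, so if $\beta\in\Sigma'_+$ and $2\beta\in\Sigma$, then $2\beta\in\Sigma'_+$ with $2\beta\succ\beta$. Second, if $i<j$ then $[U_{\beta_i},U_{\beta_j}]$ is contained in the group generated by the $U_{m\beta_i+n\beta_j}$ with $m,n\ge1$. By the commutator relation \ref{prop:root_facts:commutator_relation}, each such root is strictly $\succ\beta_j$, which is the same kind of statement as \eqref{eq:derived_series}. Consequently $U_{\beta_k}$ normalises $N_{k+1}$, and $N_{k+1}$ contains $U_{2\beta_k}$ whenever $2\beta_k$ is a root. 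The latter is exactly the hypothesis needed for $U_{\beta_k}\ast\nu_{k+1}$ to be defined. For the base case, $\beta_\ell$ is maximal, so $2\beta_\ell\notin\Sigma$. Then $\nu_\ell = U_{\beta_\ell}\ast\nu$ is $\langle A,U_{\beta_\ell}\rangle$-invariant and $A$-ergodic, by the remark following Lemma~\ref{lem:commutator_invariance}.

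\emph{Inductive step.} Assume $\nu_{k+1}$ is $A$-invariant, $A$-ergodic and $N_{k+1}$-invariant. I use the following variant of Lemma~\ref{lem:commutator_invariance}. If $U$ is a one-parameter unipotent subgroup normalising a closed subgroup $N$, and $\nu'$ is $N$-invariant, then $U\ast\nu'$ is $N$-invariant. Indeed, for $u\in U$ the measure $u_*\nu'$ is invariant under $uNu^{-1}=N$. Therefore every $F_k(T)\ast\nu'$ is $N$-invariant, and so is the weak-$*$ limit $U\ast\nu'$ from \eqref{eq:dominated_convergence}. I apply this successively to the one-parameter subgroups $U^1_{\beta_k},\dots,U^s_{\beta_k}$, each of which normalises $N_{k+1}$. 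The result is that $\nu_k$ remains $N_{k+1}$-invariant. In parallel, Proposition~\ref{prop:averaging_facts} and Lemma~\ref{lem:commutator_invariance} show that $\nu_k$ is $\langle A,U_{\beta_k}\rangle$-invariant and $A$-ergodic. For Lemma~\ref{lem:commutator_invariance} the relevant commutators $[U^i_{\beta_k},U^j_{\beta_k}]$ lie in $U_{2\beta_k}\subset N_{k+1}$, and that invariance has just been shown to persist. Since $N_k=\langle U_{\beta_k},N_{k+1}\rangle$, this completes the induction.

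\emph{Main obstacle.} I expect the bookkeeping in the inductive step to be the delicate part. One has to check that the one-parameter subgroups $U^i_{\beta}$ can be chosen normalised by $A$, so that Proposition~\ref{prop:averaging_facts} applies; the first thing I would try is taking them inside $U_\beta$ as images of $k$-lines in root spaces on which $A$ acts by a character. One also has to check that invariance gained at earlier stages is never destroyed by later averages. The normaliser argument above is designed to handle the second point cleanly. The remaining care is the non-reduced case $BC_n$, where $U_{\beta}$ is non-abelian and the ordering ensures $U_{2\beta}$-invariance is already present before averaging over $U_\beta$.
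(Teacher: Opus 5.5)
Your proposal is correct and follows the paper's argument: a downward induction on $k$ in which $\nu_k$ keeps invariance under every $U_{\beta_j}$ with $j>k$, because the commutators $[U_{\beta_k},U_{\beta_j}]$ lie in higher root groups, and your normaliser variant is just a repackaging of Lemma~\ref{lem:commutator_invariance}. You also check explicitly that $U_{2\beta_k}\subset N_{k+1}$, so that $U_{\beta_k}\ast\nu_{k+1}$ is well defined in the non-reduced $BC_n$ case; the paper's short proof leaves this detail implicit.
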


\begin{proof}
    By Proposition~\ref{prop:averaging_facts} each $\nu_k$ is $A$-invariant and $A$-ergodic. Since $\nu_{k+1}$ is $U_{\beta_j}$-invariant for every $j\geq k+1$, by Lemma~\ref{lem:commutator_invariance} and the fact that $h([\beta_k,\beta_j]) > h(\beta_k)$ we see that $\nu_k$ is $U_{\beta_j}$-invariant for $j\geq k$. By induction this holds for $k=1$. 
\end{proof}

\subsection{Averaging on \texorpdfstring{$M^\alpha$}{}}

We consider what happens when we average measures on $M^\alpha$ instead of $G/\Gamma$; in particular we lack Ratner's equidistribution statement Theorem~\ref{theo:Ratner}~\ref{theo:Ratner:equidistribution}. Let $\mu$ and $a$ be as in the conclusion of Proposition~\ref{prop:exists_positive_lyapunov}, and let $\lambda = \lambda_i(\cdot,\mu,\mathcal{D}):A\to \R$ be a Lyapunov exponent such that $\lambda_i(a,\mu,\mathcal{D}) = \lambda_+(a,\mu,\mathcal{D}) >0$. By Proposition~\ref{prop:bases_of_root_systems}~\ref{prop:bases_of_root_systems:not_in_span} we can find some basis $\Delta = \{\alpha_1,\ldots,\alpha_r\}$ of $\Sigma$ such that $\lambda \notin \operatorname{span} \{\alpha_2,\ldots,\alpha_r\}$. 

\myindent In particular, we can find some $a' \in A$ such that $\lambda_i(a',\mu,\mathcal{D}) > 0$ but $\alpha_j(a') = 0$ for all $j=2,\ldots,r$ (this is possible as the $\alpha_j$ are defined over $\Z$, and hence so are their kernels). Setting $\Delta' = \Delta\backslash \{\alpha_1\}$ we have that $U_{\Delta'}\leq Z_G(a')$. As before, for any root $\beta \in \Sigma$ we can consider one-parameter subgroups $U_\beta^1,\ldots,U_\beta^s$ that generate $U_\beta$, and consider the F\o lner sequence $\{F^i_k(T)\}$ in $U^i_\beta$. By a slight abuse of notation, we let $U_\beta^i \ast \mu$ be \emph{any} choice of weak-$*$ limit of the $F^i_k(T) \ast \mu$, and define $U_\beta \ast \mu = U_\beta^1\ast \cdots \ast U_\beta^s \ast \mu$. We note that by definition $\pi_*(U_\beta\ast\mu) = U_\beta \ast( \pi_*\mu)$.

\myindent As in Lemma~\ref{lem:solvalble_averaging} we can define recursively the measures
\begin{equation*}
    \mu_{\ell} = U_{\beta_\ell} * \mu \quad \text{and} \quad \mu_k = U_{\beta_k} * \mu_{k+1} \text{ for } k = \ell-1,\ldots,1.
\end{equation*}
We stress that (as we are on $M^\alpha$) we are choosing a weak-$*$ limit at every step, so this is not uniquely defined. In contrast after projecting to $G/\Gamma$ (and fixing the one-parameter groups $U_\beta^j$ that generate $U_\beta$) the measure $\pi_*(\mu_\ell)$ is uniquely defined by \eqref{eq:dominated_convergence}.

The measure $\mu_1$ that we obtain after this process satisfies:
\begin{enumerate}[label = {(\roman*)}]
    \item $\lambda_+(a',\mu_1,\mathcal{D})>0$ by Lemma~\ref{prop:Top_Lyapunov_amenable_averaging}~\ref{prop:top_Lyapunov_amenable_averaging:Amenable_average}, and
    \item $\pi_*(\mu_1)$ is $\langle A,U_{\Delta'}\rangle$-invariant and $A$-ergodic, by Lemma~\ref{lem:solvalble_averaging} applied to $\nu = \pi_*\mu$.
\end{enumerate}

Consider now a F\o lner sequence $F^A_i$ in $A$, and let $\mu_1'$ be an ergodic component of a weak-$*$ limit of $F^A_i\ast \mu_1$ as $i\to \infty$. Then $\mu_1'$ satisfies:
\begin{enumerate}[label = {(\roman*)}]
    \item $\lambda_+(a',\mu_1',\mathcal{D})>0$,
    \item $\pi_*(\mu_1') $ is $\langle A,U_{\Delta'}\rangle$-invariant and $A$-ergodic (since $\pi^*(\mu_1)$ is $A$-invariant and so clearly $\pi_*(\mu_1') = \pi_*(\mu_1)$), and
    \item $\mu_1'$ is $A$-invariant and $A$-ergodic.
\end{enumerate}

\subsection{Averaging argument: second step} We average over another group to complete the proof of Proposition~\ref{prop:average_to_haar}. Let $a'$ and $\mu_1'$ be as above. 

Now let $\lambda = \lambda_i(\cdot,\mu_1',\mathcal{D}):A\to \R$ be such that $\lambda_i(a',\mu_1',\mathcal{D}) = \lambda_+(a',\mu_1',\mathcal{D})$. There are two possible cases to consider: $\lambda$ is either not proportional to $\alpha_1$, or it is. In each case we pick a different element of $A$ and averaging subgroup that commutes with it.

\myindent Suppose first that $\lambda$ is not proportional to $\alpha_1$. Then we can pick some $a''\in A$ such that $\alpha_1(a'') = 0$ but $\lambda(a'') >0$. Consider then a weak-$*$ limit $\mu_0 = U_{\alpha_1}\ast \mu_1'$. By Proposition~\ref{prop:averaging_facts} $\pi_*(\mu_0)$ is $\langle A,U_{\alpha_1}\rangle$-invariant and $A$-ergodic. Since $[U_{\alpha_1},U_{-\alpha_k}] = 1$ for $k =2, \ldots,r$ we have that $\pi_*(\mu_0)$ is $U_{-\alpha_k}$-invariant for $k=2,\ldots,r$, and by Corollary~\ref{cor:Ratner_symmetry_of_entropy} it is invariant under $U_{\pm\alpha_k}$ for all $k$. In particular, it is the Haar measure on $G/\Gamma$.

\myindent Suppose instead that $\lambda$ is proportional to $\alpha_1$. Let $\delta = \sum m_i \alpha_i$ be the longest root, that is the root of greatest height in $\Sigma_+$ (alternatively, it is the maximal root with respect to the ordering $\preccurlyeq$). Let 
\begin{equation*}
    \delta' = \begin{cases}
        \delta & \text{if } m_1=1; \\
        \delta-\alpha_1 & \text{if } m_1=2.
    \end{cases}
\end{equation*}
A table of the $\delta'$ can be found in the appendix of \cite{brown_zimmers_2020}. We observe that by this choice $[U_{\alpha_k},U_{\delta'}]=1$ for $k=2,\ldots,r$. Furthermore, there is a chain of roots $\delta_i\in \Sigma$ such that
\begin{equation}\label{eq:chain_of_roots} 
    \delta_0 = \alpha_1, \quad \delta_i = \delta_{i-1} + \alpha_{k_i}\text{ for some } k_i \in \{2,\ldots, r\}, \quad \text{and} \quad \delta_{s} = \delta',
\end{equation}
where $s = h(\delta')-1$. Now pick an $a''$ such that $\delta'(a'') = 0$ but $\lambda(a'') >0$.

\myindent We choose $\mu_0$ to be any weak-$*$ limit of $U_{\delta'}\ast\mu_1'$. By Proposition~\ref{prop:averaging_facts} we have that $\pi_*(\mu_0)$ is $\langle A,U_{\delta'}\rangle$-invariant and $A$-ergodic. In addition, it is $U_{\alpha_k}$-invariant for $k=2,\ldots,r$ since these groups commute with $U_{\delta'}$, and by Corollary~\ref{cor:Ratner_symmetry_of_entropy} also $U_{-\alpha_k}$-invariant for $k=2,\ldots,r$. We then also have $[U_{\delta'},U_{-\alpha_{k_{s}}}] = U_{\delta_{s-1}}$-invariance, and we can descend down the chain in \eqref{eq:chain_of_roots} to obtain $U_{\alpha_1}$-invariance. By Corollary~\ref{cor:Ratner_symmetry_of_entropy} again we get that $\pi_*(\mu_0)$ is the Haar measure on $G/\Gamma$.

\myindent Summarising the two cases, we have that $\mu_0$ satisfies:
\begin{enumerate}[label = {(\roman*)}]
    \item $\lambda_+(a'',\mu_0,\mathcal{D})>0$ by Lemma~\ref{prop:Top_Lyapunov_amenable_averaging}~\ref{prop:top_Lyapunov_amenable_averaging:Amenable_average}, and
    \item $\pi_*(\mu_0) =m_{G/\Gamma}$ is the Haar measure.
\end{enumerate}

Finally we can let $\mu_0'$ be an ergodic component of a weak-$*$ limit of $F_i^A\ast\mu_0$ as $i\to \infty$. By Lemma~\ref{prop:Top_Lyapunov_amenable_averaging}~\ref{prop:top_Lyapunov_amenable_averaging:Amenable_average}, and the fact that $\pi_*(\mu_0)$ is $A$-invariant and so $\pi_*(\mu_0') = \pi_*(\mu_0) = m_{G/\Gamma}$, we have completed the proof of Proposition~\ref{prop:average_to_haar}. \qed

\begin{rem}
    We remark that the averaging argument can be made far more general, provided that one has a sufficiently nice cocycle (specifically one needs Lemma~\ref{prop:Top_Lyapunov_amenable_averaging} to apply). That is, it can be made to work for any $S$-arithmetic group as defined in Section~\ref{sect:S-arithmetic}, and one does not need to make any assumptions on the dimension of the manifold or the rank of the factors. We chose not to write this argument in full generality, just focusing on the setup needed for the proof of Theorem~\ref{theo:simple_p_adic_case}.
\end{rem}

     \section{Proof of Theorem~\ref{theo:G_invariance}: The invariance principle}
\label{sect:invariance}

In this section we prove Theorem~\ref{theo:G_invariance}, and thus complete the proof of Theorem~\ref{theo:simple_p_adic_case} as outlined in Section~\ref{sect:proof_sketch}. In fact, we will prove a result more general than Theorem~\ref{theo:G_invariance}, allowing for all $S$-arithmetic groups. We first recall a standard definition.

\begin{defn}
    For an element $1\neq a\in G$, the \emph{stable horospherical subgroup} is
\begin{equation*}
    G_a^- := \{g:a^nga^{-n}\to e\mid n\to \infty\}
\end{equation*}
and the \emph{unstable horospherical subgroup} is  $G_a^+ := G_{a^{-1}}^-$. 
\end{defn}

\begin{theo}
    \label{theo:invariance_under_stanble_and_unstable_horosphericals}
    Let $S\subset \mathcal{V}$ be a finite set of places of $\mathbb{Q}$ and let $G$ be an $S$-arithmetic group and $\Gamma \leq G$ an irreducible uniform lattice. For each $p\in S$ let $A_p\leq \mathbb{G}_p(\mathbb{Q}_p)$ be a $\mathbb{Q}_p$-split torus (not necessarily maximal) of dimension $l_p \geq 0$. Let $\dim A := \sum_{p\in S} l_p$.
    
    \myindent Let $M$ be a compact manifold and $\alpha:\Gamma\to \Diff^1(M)$ a homomorphism, and construct the corresponding suspension space $M^\alpha$ and projection $\pi:M^\alpha \to G/\Gamma$. Let $\mu$ be any probability measure on $M^\alpha$ that is $A$-invariant and $A$-ergodic and such that $\pi_*(\mu) = m_{G/\Gamma}$ is the Haar measure.

    Suppose in addition that either
    \begin{enumerate}
        \item $\dim M < \dim A$, or
        \item $\dim M = \dim A$ and $\alpha(\Gamma)$ preserves a $C^1$-volume form.
    \end{enumerate}
    Then there is some non-trivial $1\neq a\in A$ such that $\mu$ is invariant under $\langle G_a^+,G_a^-\rangle$.
\end{theo}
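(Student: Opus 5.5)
The plan is to find a direction in $A$ along which all fiberwise Lyapunov exponents of $\mathcal{D}$ vanish. Once we have it, an invariance principle applied to the $A$-action on $M^\alpha$ will show that the fiber conditional measures of $\mu$ are invariant under stable and unstable holonomies. In the suspension these holonomies are essentially the actions of $G_a^\pm$.

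\textbf{Step 1: a direction with zero exponents.} Apply Theorem~\ref{theo:Oseledets} to $\mathcal{D}$ over the $A$-action, with $A\cong \prod_p \Z^{l_p}\times(\text{factors with compact part})$; after discarding compact parts, $A\cong\Z^{\dim A}$. This gives at most $\dim M$ Lyapunov functionals $\lambda_1,\dots,\lambda_q$, which extend to linear functionals on $\mathfrak{a}=A\otimes\R$. In case (1) the common kernel $V=\bigcap_i\ker\lambda_i$ is a subspace of dimension $\geq \dim A-\dim M\geq 1$. In case (2), volume preservation gives $\sum_i \dim E_i\,\lambda_i=0$, so only $\dim M-1$ constraints are independent and again $\dim V\geq 1$.

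The $\lambda_i$ need not be rational, so $V$ may contain no nonzero point of the lattice $A$. To get around this, pick $b\in V$ avoiding every root hyperplane $\ker\beta$ (with $\beta$ a root of some $\mathbb{G}_p$ relative to $A_p$) that does not contain $V$. The roots are integral, so the cell of the root hyperplane arrangement containing $b$ is a relatively open rational polyhedral cone. It therefore contains a nontrivial integral point $a$ with the same sign pattern on the roots. Since $G_a^\pm$ is generated by the root groups $U_\beta$ with $\pm\beta(a)<0$, this gives $G_a^\pm=G_b^\pm$, where $G_b^\pm$ is defined via signs of $\beta(b)$. If $V$ lies in every root hyperplane, then $G_b^\pm$ is trivial for the $A$-components in question; a generic choice of $b$ in $V$ with some nonzero root value is needed to get $a\neq 1$, and I expect this to be a routine check.

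\textbf{Step 2: the invariance principle along $b$.} To use the real element $b$, suspend the $\Z^{\dim A}$-action to an $\R^{\dim A}$-action on $X=M^\alpha\times_{\Z^{\dim A}}\R^{\dim A}$ and extend $\mathcal{D}$ as a cocycle. All exponents of the time-one map of $b$ are then zero for the lifted measure, which is ergodic after passing to a component. The base of the fibration $X\to (G/\Gamma)\times_{\Z^{\dim A}}\R^{\dim A}$ carries a Haar-type measure with local product structure along $G_b^-$ and $G_b^+$.

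The key simplification in the totally disconnected setting is that the return cocycle $x\mapsto\gamma_x(g)$ is locally constant. Hence for $g\in G_b^-$ close to $e$ the canonical stable holonomy between fibers over $x$ and $gx$ is the identity in the trivialization $\mathcal{F}\times M$, that is, it is just the $g$-action. With zero exponents, Ruelle's inequality gives zero fiber entropy $h_\mu(b\mid\pi)=0$ for $b$ and $b^{-1}$. I would then invoke the Avila--Viana invariance principle \cite{avila_extremal_2010}, in the $C^1$ form used in \cite{brown_c1_2022}, where continuity of the holonomies is all that is required. This yields a disintegration $\{\mu_x\}$ over $G/\Gamma$ that is invariant under $s$-holonomies and $u$-holonomies. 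Translated back, this says $g_*\mu_x=\mu_{gx}$ for $g\in G_a^-\cup G_a^+$. Combined with $\pi_*\mu=m_{G/\Gamma}$ being $G$-invariant, this shows $\mu$ is invariant under $\langle G_a^+,G_a^-\rangle$.

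\textbf{Main obstacle.} I expect the hardest part to be Step 2: verifying the hypotheses of the invariance principle for a real element of the suspended flow over a partly $p$-adic base. This requires checking local product structure of Haar measure along $G_b^\pm$ in $G/\Gamma$, continuity of the holonomies, and that the $C^1$ (rather than $C^{1+}$) regularity suffices. The latter is handled as in \cite{brown_c1_2022}, by working with the projectivized derivative cocycle or directly with fiber entropy. The rational approximation in Step 1 is the other point needing care, but it looks routine.
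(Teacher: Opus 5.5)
Your proposal is correct and follows essentially the paper's route. You find a zero-exponent direction $v\in A\otimes_\Z\R$ from the dimension count (or the vanishing sum of exponents), pass to the $\R^{\dim A}$-suspension with the floor-function cocycle, apply the $C^1$ Avila--Viana principle for both $v$ and $-v$, and obtain the integral element $a$ by matching root sign patterns, exactly as the paper's cone $\mathcal{C}_v$ does. Your observation that the stable holonomies are simply the $G_v^-$-action (locally trivial in $\mathcal{F}\times M$ because the return cocycle is locally constant) is what the paper encodes through the subordinate partition $\xi$ and the conjugacy $\Phi$; that, rather than continuity of holonomies, is what verifies the measurability hypothesis of Theorem~\ref{theo:Avila-Viana}.
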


\begin{proof}[Proof of Theorem~\ref{theo:G_invariance}]
    In the setting of Theorem~\ref{theo:G_invariance}, $A$ is a maximal $\mathbb{Q}_p$-split torus. Since $G$ is simple, for any $1\neq a\in A$ we have that $\langle G_a^-, G_a^+\rangle = G$ and so the result follows from Theorem~\ref{theo:invariance_under_stanble_and_unstable_horosphericals}.
\end{proof}

\subsection{Facts about Horospherical subgroups}

Letting $\mathfrak{g}^\pm$ be the Lie algebras of $G_a^\pm$ we have a decomposition $\mathfrak{g} = \mathfrak{g}_- \oplus \mathfrak{g}_0 \oplus \mathfrak{g}_+$, where $\mathfrak{g}_0 = \Lie(Z_G(a))$ as before. Notice that 
\begin{equation*}
    \mathfrak{g}_- = \bigoplus_{\alpha\in \Sigma: \alpha(a) < 0} \mathfrak{g}_\alpha
\end{equation*}
and similarly for $\mathfrak{g}_+$.

\myindent We know that $A\cong \R^{l_\infty} \times \prod_{p\in S\backslash \{\infty\}}\Z^{l_p}$. The $\R^{l_\infty}$-factor can potentially simplify some steps, but taking account of this will only introduce burdensome notation and will not improve the final result. As such, we simply pick a copy of $\Z^{l_\infty}\leq A_\infty$ and redefine $A := \Z^{l_\infty} \times \prod_{p\in S\backslash \{\infty\}}\Z^{l_p} \cong \Z^{\dim A}$. The Cartan decomposition is no longer true for this $A$, but this is irrelevant for what follows. We will not comment on this further.

We define the notation $A_\R := A \otimes_\Z\R$, and let $S(A_\R) = (A_\R\backslash\{0\})/\R_{>0}$ be the space of directed rays in $A_\R$ (which is canonically the unit sphere in $A_\R$, hence the notation). Consider a point $[v]\in S(A_\R)$, and any sequence $(a_j)\in A$ such that
\begin{enumerate}[label = {(\roman*)}]
    \item $[a_j] \to [v]$ as $j\to \infty$ in $S(A_\R)$;
    \item If $\alpha(v) = 0$ for some $\alpha \in \Sigma$, then $\alpha(a_j) = 0$ for all $j$.
\end{enumerate}
Then the subgroup $G_{a_j}^-$ is eventually constant, and is independent of the sequence $(a_j)$ we chose. We define $G_v^-$ to be this subgroup. Alternatively we can define it via $G_v^- := \langle U_\alpha:\alpha(v) < 0\rangle$. This group depends only on the smallest simplex containing $v$ in the Weyl complex associated to the root system $\Sigma$. For any $[v]\in S(A_\R)$ consider the subset $\mathcal{C}_v \subset A$ consisting of elements $a$ such that
\begin{enumerate}[label = {(\roman*)}]
    \item $\alpha(a) = 0$ whenever $\alpha(v) = 0$,
    \item $\alpha(a) > 0$ whenever $\alpha(v) > 0$, and
    \item $\frac{\alpha(a)}{|a|} < \frac{\alpha(v)}{2|v|}$ whenever $\alpha(v) < 0$.
\end{enumerate}
Notice that the smallest cone in $A_\R$ containing $C_v$ is given by the intersection of an open cone in $A_\R$ with the hyperplanes corresponding to $\alpha(v) = 0$, and in particular if $a\in \mathcal{C}_v$ then $a^n \in \mathcal{C}_v$ for all $n \geq 1$. We remark that we can alternatively define $G_v^- := G_a^-$ for any $a\in \mathcal{C}_v$.

\subsection{Subordinate partitions}
\label{sect:partition}

Let $(X,\mathcal{B}_X,\nu_X)$ be a Lebesgue space, we recall some necessary facts about partitions of it (see for example \cite[Appendix 1]{cornfeld_fomin_sinai_1982}). We say that two collections $\mathcal{C},\mathcal{C}'$ of measurable sets \emph{coincide (mod 0)} if for every $A\in \mathcal{C}$ there is some $A\in \mathcal{C}'$ such that $\nu_X(A\triangle A') = 0$ and vice versa.

\myindent For any partition $\xi$ of $X$ consisting of measurable sets we can associate a complete $\sigma$-algebra $\mathcal{B}_\xi \subset \mathcal{B}_X$, consisting of measurable sets which coincide mod 0 with unions of sets in $\xi$. Conversely to any sub-$\sigma$-algebra $\mathcal{A} \subset \mathcal{B}_X$ we can associate a partition $\xi(\mathcal{A})$ such that $\mathcal{B}_{\xi(\mathcal{A})}$ is equivalent to $\mathcal{A}$ mod 0.  It is not always the case that $\xi({\mathcal{B}_\xi})$ is equivalent mod $0$ to $\xi$, even in the case where all the $\xi$ are measurable sets; to guarantee this it suffices to assume that $\mathcal{B}(\xi)$ is countably generated. In this case we say that the partition $\xi$ is \emph{measurable}. We say that a function is \emph{$\xi$-measurable} if it is measurable with respect to $\mathcal{B}_\xi$. Given a partition $\xi$, for $x\in X$ we denote by $[x]_\xi$ the partition element containing $x$; if $\mathcal{B}_\xi$ is countably generated this is equivalently the atom of $x$ in $\mathcal{B}_\xi$. Given two partitions $\xi$, $\xi'$ we say that $\xi'$ is \emph{finer} than $\xi$, denoted $\xi \prec \xi' $, if every element of $\xi$ is a union of elements of $\xi'$ (mod 0), equivalently $\mathcal{B}_\xi \subset \mathcal{B}_{\xi'}$.

\myindent The following is an easy modification of a standard construction, see for example \cite[Proposition 7.37]{Pisa}. We remark that the Haar measure $m_{G/\Gamma}$ is ergodic for any (non-trivial) $a\in A$. 

\begin{prop}
\label{prop:subordinate_partition}
    Consider some $[v] \in S(A_\R)$. Let $\nu$ be an $A$-invariant probability measure on $G/\Gamma$, and assume in addition it is ergodic for every $a\in \mathcal{C}_v$. Given $U \leq G_v^-$ a closed $A$-normalised subgroup, there is a measurable partition $\xi$ of $G/\Gamma$ such that 
    \begin{enumerate}
        \item $\xi$ is \textbf{subordinate to $U$}; that is for $\nu$-a.e. $x$ there exists $\delta>0$ such that
        \begin{equation*}
            B_\delta^U\cdot x \subset [x]_\xi \subset B_{\delta^{-1}}^U\cdot x
        \end{equation*}
        \item $\xi$ is \textbf{$a$-decreasing} for every $a\in \mathcal{C}_v$, that is $a^{-1}\xi \prec \xi$.
    \end{enumerate}
    Here $B_r^U$ denotes the $r$-ball in $U$.

    Furthermore, it follows from the construction and the fact that $G/\Gamma$ is compact that there is some $\delta_0 > 0$ such that for $\nu$-a.e. $x$, $[x]_\xi \subset B_{\delta_0^{-1}}^U\cdot x$. In particular, the upper bound in $U$ on $[x]_\xi$ is uniform for $\nu$-a.e. $x$.
\end{prop}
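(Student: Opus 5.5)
We follow the standard Einsiedler--Lindenstrauss construction (as in \cite[Proposition 7.37]{Pisa}), adapted to the cone $\mathcal{C}_v$ instead of a single element. Fix $a_0 \in \mathcal{C}_v$. Since $U \leq G_v^- = G_{a_0}^-$ is closed and $A$-normalised, conjugation by every $a\in\mathcal{C}_v$ contracts $U$. In the non-Archimedean factors this contraction maps compact-open subgroups of $U$ into themselves; in the real factors it shrinks balls. The plan is to first build a partition subordinate to $U$ and decreasing for $a_0$, and then check that the same partition is decreasing for all of $\mathcal{C}_v$.

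\emph{Step 1 (local product chart).} Since $G/\Gamma$ is compact and $\Gamma$ is discrete, there is an injectivity radius $r_0>0$. Pick a point $x_0$ and a small open neighbourhood $Y = B^{G}_{r}\cdot x_0$ with $r<r_0/10$. Locally $G = U \cdot W$, where $W$ is a transversal subgroup built from the remaining root groups and $Z_G(A)$. Then $Y$ has a product structure: each $y\in Y$ lies on a unique plaque $(B^U_{r}\cdot w x_0)\cap Y$. Using the $\nu$-measure of thin boundary neighbourhoods, we choose $r$ so that
\[
\nu\bigl(\partial_\rho Y\bigr) \leq C\rho^{\kappa}
\]
for some $\kappa>0$. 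Here we may vary $r$ in an interval and use that, for all but countably many $r$, the boundary has measure zero; to get the quantitative bound, average over $r$, as in the standard argument. In the $p$-adic factors one can take $B^U_r$ to be compact-open subgroups, which makes the plaques genuinely clopen and simplifies this step considerably.

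\emph{Step 2 (the partition).} Let $\mathcal{P}$ be the partition of $G/\Gamma$ into the $U$-plaques of $Y$ together with the single set $X\setminus Y$. Define
\[
\xi := \bigvee_{n\geq 0} a_0^{\,n}\mathcal{P},
\]
so that $[x]_\xi$ consists of the points $y$ with $a_0^{-n}y$ in the same $\mathcal{P}$-atom as $a_0^{-n}x$ for all $n \geq 0$. Then $a_0^{-1}\xi \prec \xi$ is immediate. Each atom is contained in a $U$-orbit piece, since any point returns to $Y$ by ergodicity of $a_0$. Atoms are also of uniformly bounded $U$-size: if $x\in Y$ the atom is inside a plaque of size $r$, and otherwise one uses the first return time. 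The latter can be bounded only measurably, but the upper bound $[x]_\xi\subset B^U_{\delta_0^{-1}}x$ comes from the plaque size together with the contraction of $U$ under $a_0^{-n}$-conjugation, which only expands the plaques backwards. More carefully, we define $[x]_\xi$ by returning to $Y$ in the past: $a_0^{-n}x\in Y$ for some $n$ with $a_0^{-n}[x]_\xi$ inside a plaque, so $[x]_\xi\subset a_0^{n}B^U_r a_0^{-n}x\subset B^U_{r'} x$ uniformly. Measurability holds because $\xi$ is a countable join of measurable partitions.

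\emph{Step 3 (subordination lower bound).} This is the main obstacle: showing that for a.e.\ $x$ some $B^U_\delta x\subset[x]_\xi$. We follow the Borel--Cantelli argument. Under $a_0^{-n}$, the ball $B^U_\delta$ grows by at most $e^{cn}$, so $B^U_\delta x$ fails to lie in $[x]_\xi$ only if $a_0^{-n}x\in\partial_{\delta e^{cn}}$-neighbourhoods of the plaque boundaries. Hence we need
\[
\sum_n \nu\bigl(\partial_{e^{-\eta n}}Y\bigr)<\infty,
\]
which follows from the bound in Step 1 using $a_0$-invariance of $\nu$.

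\emph{Step 4 (whole cone).} For general $a\in\mathcal{C}_v$, the decreasing property $a^{-1}\xi\prec\xi$ is the delicate part. We would instead replace the generator by a join over the semigroup,
\[
\xi := \bigvee_{b\in \mathcal{C}_v\cup\{e\}} b\,\mathcal{P}.
\]
This join is still countable and still $U$-subordinate by the same Borel--Cantelli estimate, where the sum now runs over the lattice points of the cone and converges because the contraction rates on $\mathfrak{u}$ are bounded below linearly in $|b|$ on $\mathcal{C}_v$ by condition (iii). Since $\mathcal{C}_v$ is a semigroup, $a\,\mathcal{C}_v\subset\mathcal{C}_v$, and therefore $a^{-1}\xi\prec\xi$ for all $a\in\mathcal{C}_v$.
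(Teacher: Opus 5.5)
Your plan (plaques in a box with controlled boundary measure, a countable join of translates of $\mathcal{P}$, Borel--Cantelli for the inner bound, and a join over the semigroup $\mathcal{C}_v$ for the whole cone) is the standard Einsiedler--Lindenstrauss construction that the paper cites. However, your join runs in the wrong time direction, and this is fatal.

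For $\xi=\bigvee_{n\geq 0}a_0^{n}\mathcal{P}$, the atom $[x]_\xi$ is cut out by the past itinerary. Whenever $a_0^{-n}x\in Y$, the corresponding cut is contained in $(a_0^{n}B^U_r a_0^{-n})\cdot x$, which shrinks exponentially in $n$; this is exactly what you use in Step 2. Since $\nu(Y)>0$ and $\nu$ is $a_0$-ergodic, $\nu$-a.e.\ $x$ has $a_0^{-n}x\in Y$ for infinitely many $n\geq 0$. Hence $[x]_\xi=\{x\}$ almost everywhere, so $\xi$ is the point partition and contains no $B^U_\delta\cdot x$.

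Step 3 shows this if read consistently. The bad event is $a_0^{-n}x\in\partial_{\delta e^{cn}}Y$, whose measure does not tend to $0$, so the series diverges; writing $\partial_{e^{-\eta n}}Y$ instead is a sign error. Your justification of the decreasing property is also reversed. Since $a_0^{-1}\xi=a_0^{-1}\mathcal{P}\vee\xi$, the join gives $\xi\prec a_0^{-1}\xi$. In Step 4, the inclusion $a\,\mathcal{C}_v\subset\mathcal{C}_v$ gives $a\xi\prec\xi$, not $a^{-1}\xi\prec\xi$. Finally, the uniform outer bound in Step 2 is an artefact of cutting in the contracting direction.

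The cuts must come from the future: take $\xi=\bigvee_{b\in\mathcal{C}_v\cup\{e\}}b^{-1}\mathcal{P}$. Then $a^{-1}\xi=\bigvee_b(ba)^{-1}\mathcal{P}$ is a sub-join because $ba\in\mathcal{C}_v$, so $a^{-1}\xi\prec\xi$. If $B^U_\delta\cdot x\not\subset[x]_\xi$, then $bx$ lies in the $C\delta e^{-c|b|}$-neighbourhood of a plaque boundary for some $b$. Your Borel--Cantelli estimate over the lattice points of the cone now converges, using:
\begin{itemize}
\item polynomially many lattice points per shell;
\item a contraction rate on $U$ of at least $c|b|$, by condition (iii);
\item $A$-invariance of $\nu$;
\item $\nu(\partial Y)=0$, to handle the finitely many exceptional $b$.
\end{itemize}

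With this direction, however, a point $x\notin Y$ only satisfies $[x]_\xi\subset b^{-1}[bx]_{\mathcal{P}}$ for some $b\in\mathcal{C}_v$ with $bx\in Y$. That is an expanded plaque of non-uniform size. So the ``Furthermore'' clause needs a separate input from compactness. For example, cover $G/\Gamma$ by finitely many boxes $Y_1,\dots,Y_N$ below the injectivity radius, each satisfying the boundary estimate. Let $\mathcal{P}$ be the partition into $U$-plaques of the sets $Y_i\setminus\bigcup_{j<i}Y_j$, with no complementary atom. Then $[x]_\xi\subset[x]_{\mathcal{P}}\subset B^U_r\cdot x$ for every $x$.
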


\subsection{Dynamics on the partition}

Consider now the Haar measure $m_{G/\Gamma}$ on $G/\Gamma$ and for some $[v] \in S(A_\R)$ the group $U = G_v^-$ and the partition $\xi$ given by Proposition~\ref{prop:subordinate_partition}. Let $\psi:G/\Gamma \to G/\Gamma$ be a measurable selection map (see for example \cite[Lemma 4.6]{avila_extremal_2010}); that is $\psi$ is defined such that $\psi(x) \in [x]_\xi$ for a.e. $x$, and $\psi$ is constant on atoms of $\xi$. Let $s:G/\Gamma \to G$ be the section of the projection map $\pi:G\to G/\Gamma$ corresponding to the fundamental domain $\mathcal{F}$. Let $\overline{\psi} = s\circ \psi$.

By Poincar\'{e} recurrence it follows that for a generic $x$ the map $u \mapsto u\cdot x$ is injective, so let $U_x $ be the inverse image of $[x]_\xi$ under the orbit map $u\mapsto u\cdot \psi(x)$. Define $\xi_1(x) = U_x\overline{\psi}(x)$, and notice that
\begin{enumerate}
    \item $\pi(\xi_1(x)) = [x]_\xi$;
    \item $\xi_1(x) \cap \mathcal{F} \neq \emptyset$.
\end{enumerate}

Given a generic $x\in G/\Gamma$, let $u_x$ be such that $\psi(x) = u_x x$. For any $a\in \mathcal{C}_v$, define the diffeomorphism $F_x^a:M\to M$ to be
\begin{equation*}
    F_x^a = \gamma_{\psi(x)}(u_{a\psi(x)}a)
\end{equation*}
and consider the map $F^a:G/\Gamma\times M \to G/\Gamma \times M: (x,y) \mapsto (a x\gamma_x(a)^{-1}, F_x^a(y))$. Recall that $M^\alpha = G/\Gamma\times M$ and let $\Phi:G/\Gamma \times M \to G/\Gamma \times M$ be the map $(x,y) \mapsto (x,\gamma_x(u_{x})y)$. We denote $\mu' := \Phi_*\mu$. The following is an adaptation of \cite[Claim 1]{brown_c1_2022}.

\begin{prop}
\label{prop:measurable_conjugacy}
    $\Phi$ is a Borel isomorphism. In addition, for any $a\in \mathcal{C}_v$ we have that $F^a\circ \Phi = \Phi \circ a$.
\end{prop}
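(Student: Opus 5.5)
The plan is to verify both claims by unwinding the definitions, using the cocycle identity \eqref{eq:return_cocyle} for the return cocycle and the $a$-decreasing property of $\xi$ from Proposition~\ref{prop:subordinate_partition}. Throughout we work on the full-measure set of generic $x\in G/\Gamma$ on which three things hold: the orbit map $u\mapsto u\cdot x$ is injective on $U=G_v^-$; $\psi$ is defined and constant on atoms of $\xi$; and the subordination holds. Since $m_{G/\Gamma}$ is $A$-invariant, we may intersect this set with its $a^{n}$-translates, $n\in\Z$, and so assume it is $a$-invariant. On this set $u_x\in U$ is the unique element with $\psi(x)=u_x x$. It lies in the bounded set $B^U_{\delta_0^{-1}}$ by the uniform bound in Proposition~\ref{prop:subordinate_partition}.

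\textbf{Step 1: $\Phi$ is a Borel isomorphism.} Because $u_x$ lies in the bounded set $B^U_{\delta_0^{-1}}$, the return cocycle $x\mapsto\gamma_x(u_x)$ takes only finitely many values in $\Gamma$. Since $\psi$ is measurable, $x\mapsto u_x$ is measurable, and for fixed $g$ the map $x\mapsto\gamma_x(g)$ is continuous (as $\mathcal{F}$ is open). Hence $x\mapsto\gamma_x(u_x)$ is measurable, and its level sets partition $G/\Gamma$ into finitely many Borel sets. On each of these, $\Phi$ is the product of the identity with a fixed $C^1$-diffeomorphism $\alpha(\gamma)$. The inverse $(x,y)\mapsto(x,\gamma_x(u_x)^{-1}y)$ is therefore also Borel.

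\textbf{Step 2: the conjugacy.} First compute both sides, where $a\cdot(x,y)=(ax\gamma_x(a)^{-1},\gamma_x(a)y)$ and we suppress the section $s$:
\begin{equation*}
\Phi(a\cdot(x,y))=\bigl(ax,\ \gamma_{ax}(u_{ax})\gamma_x(a)y\bigr),\qquad F^a(\Phi(x,y))=\bigl(ax,\ \gamma_{\psi(x)}(u_{a\psi(x)}a)\gamma_x(u_x)y\bigr).
\end{equation*}
By \eqref{eq:return_cocyle}, the first fibre map equals $\gamma_x(u_{ax}a)$. The second equals $\gamma_x(u_{a\psi(x)}a u_x)$, again by \eqref{eq:return_cocyle}, because the base point $u_x x\gamma_x(u_x)^{-1}$ is $s(\psi(x))$. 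It therefore suffices to show the group identity
\begin{equation*}
u_{a\psi(x)}\,a\,u_x=u_{ax}\,a\quad\text{in }G.
\end{equation*}

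\textbf{Step 3: the identity in $G$.} By Proposition~\ref{prop:subordinate_partition}, $\xi$ is $a$-decreasing, i.e.\ $a^{-1}\xi\prec\xi$. The atom of $a^{-1}\xi$ through $x$ is $a^{-1}[ax]_\xi$, so $[x]_\xi\subset a^{-1}[ax]_\xi$, equivalently $a[x]_\xi\subset[ax]_\xi$. Since $\psi(x)\in[x]_\xi$, we get $a\psi(x)\in[ax]_\xi$. As $\psi$ is constant on atoms, $\psi(a\psi(x))=\psi(ax)$, which unwinds to
\begin{equation*}
u_{a\psi(x)}\,a\,u_x\cdot x=u_{ax}\,a\cdot x.
\end{equation*}
Now write both elements as (element of $U$)$\cdot a$. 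Since $A$ normalises $U$, we have $u_{a\psi(x)}(au_xa^{-1})\in U$ and $u_{ax}\in U$, and these two elements agree on the point $ax$. Injectivity of the $U$-orbit map at the generic point $ax$ then forces them to be equal, giving the identity.

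\textbf{Main obstacle.} The delicate part is not the algebra but the measure-theoretic bookkeeping: making "generic'' precise so that injectivity of orbit maps and the atom identity $\psi(a\psi(x))=\psi(ax)$ hold simultaneously on an $a$-invariant full-measure set, and checking that the identification of $s(\psi(x))$ with $u_x s(x)\gamma_x(u_x)^{-1}$ is compatible with the trivialisation $M^\alpha\cong\mathcal{F}\times M$. I would handle this first, by intersecting countably many full-measure sets.
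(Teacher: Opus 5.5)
Your proposal is correct and follows the paper's proof. Both compute $\Phi\circ a$ and $F^a\circ\Phi$ via the return-cocycle identity, reduce to the group identity $u_{a\psi(x)}au_x=u_{ax}a$, and derive it from $a[x]_\xi\subset[ax]_\xi$ together with $\psi$ being constant on atoms. Your extra steps make explicit what the paper leaves implicit: rewriting both sides as an element of $U$ times $a$ and invoking injectivity of the $U$-orbit map at $ax$, and the finitely-many-level-sets argument showing $\Phi$ is a Borel isomorphism.
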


\begin{proof}
    It is clear that $\Phi$ is a Borel isomorphism. We calculate
    \begin{align*}
        \Phi \circ a (x,y) & = \Phi(ax\gamma_x(a)^{-1}, \gamma_x(a)y) \\ 
        & = (ax\gamma_x(a)^{-1}, \gamma_{ax\gamma_x(a)^{-1}}(u_{ax})\gamma_x(a)y) \\
        & = (ax\gamma_x(a)^{-1}, \gamma_x(u_{ax}a)y) \\
        \text{and}\\
        F^a \circ \Phi(x,y) & = F^a(x,\gamma_x(u_x)y) \\
        & = (ax\gamma_x(a)^{-1}, F_{x}^a(\gamma_x(u_x)y)) \\
        & = (ax\gamma_x(a)^{-1}, \gamma_{\psi(x)}(u_{a\psi(x)}a)\gamma_x(u_x)y) \\
        & = (ax\gamma_x(a)^{-1}, \gamma_{x}(u_{a\psi(x)}au_x)y).
    \end{align*}
    So it suffices to show that 
    \begin{equation*}
        u_{a\psi(x)}au_x = u_{ax}a.
    \end{equation*}
    Recall that by definition, $u_{ax}ax = \psi(ax)$, whereas 
    \begin{equation*}
        u_{a\psi(x)}au_xx = u_{a\psi(x)}a\psi(x) = \psi(a\psi(x))
    \end{equation*}
    However since $\xi$ is $a$-decreasing, we have that $a\psi(x) \in a[x]_\xi \subset [ax]_\xi$ and hence $\psi(ax) = \psi(a\psi(x))$, completing the proof.
\end{proof}

Now let $\rho:x\mapsto \mu_x'$ be the disintegration of $\mu'$ with respect to the partition of $G/\Gamma \times M$ into fibers over $G/\Gamma$; so for a.e. $x\in G/\Gamma$, $\mu'_x$ is a measure on $M$. We have the following properties, analogous to \cite[Proposition 6]{brown_c1_2022}.

\begin{prop}
\label{prop:measurability_properties}
    We have for $a\in \mathcal{C}_v$ that
    \begin{enumerate}[label = {(\alph*)}]
        \item $x \mapsto F_x^a$ is $\xi$-measurable. In particular for a.e. $x\in G/\Gamma$ and for every $x'\in [x]_\xi$, $F_x^a = F_{x'}^a$;
        \item The function $x \mapsto \log \|(F_x^a)^{-1}\|_{C^1}$ is in $L^1(G/\Gamma,m_{G/\Gamma})$; 
        \item The fiberwise Lyapunov exponents for $a$ with respect to $\mu$ are all non-positive if and only if the fiberwise Lyapunov exponents for $F^a$ with respect to $\mu'$ are all non-positive;
        \item $\mu$ is $U$-invariant if and only if $\rho$ is $\xi$-measurable.
    \end{enumerate}
\end{prop}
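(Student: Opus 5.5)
The four parts are of different difficulty. I will prove (a)--(c) directly from the definitions of $\psi$, $u_x$ and the return cocycle, and prove (d) by comparing the fiber measures $\mu_x$ and $\mu'_x$ along atoms of $\xi$.

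For (a), the definition $F_x^a=\gamma_{\psi(x)}(u_{a\psi(x)}a)$ involves $x$ only through $\psi(x)$. Indeed, $u_{a\psi(x)}$ is determined by the point $a\psi(x)$ and by $\psi(a\psi(x))$, and $\gamma_{\psi(x)}$ is computed at the section point $s(\psi(x))$. Since $\psi$ is $\xi$-measurable and constant on atoms, so is $x\mapsto F^a_x$, and $F_x^a=F_{x'}^a$ whenever $x'\in[x]_\xi$.

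For (b), I will show that $x\mapsto F^a_x$ takes only finitely many values, so the integrand is bounded.
\begin{itemize}
\item By construction $\psi(y)=u_y y$ with $\psi(y)\in[y]_\xi$, so $u_y\in U_y$.
\item By the final clause of Proposition~\ref{prop:subordinate_partition}, $u_y\in B^U_{\delta_0^{-1}}$ for a.e.\ $y$, uniformly in $y$.
\item Hence $u_{a\psi(x)}a$ lies in the fixed compact set $B^U_{\delta_0^{-1}}a$, and $s(\psi(x))$ lies in the compact set $\mathcal{F}$.
\item The set $B^U_{\delta_0^{-1}}a\,\mathcal{F}$ is compact, so it meets only finitely many translates $\mathcal{F}\gamma$. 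Therefore $\gamma_{\psi(x)}(u_{a\psi(x)}a)$ ranges over a finite subset of $\Gamma$.
\end{itemize}
It follows that $\log\|(F^a_x)^{-1}\|_{C^1}$ is essentially bounded, hence in $L^1$.

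For (c), the same finiteness argument applies to the fiber maps $\Phi_x=\gamma_x(u_x)$ of $\Phi$. Let $K$ be the finite subset of $\Gamma$ in which these elements lie; then $\sup_x\|D\Phi_x\|$ and $\sup_x\|D\Phi_x^{-1}\|$ are both finite. Proposition~\ref{prop:measurable_conjugacy} gives $F^a\circ\Phi=\Phi\circ a$, so $\mu'$ is $F^a$-invariant. Differentiating along fibers gives
\[ D_{\Phi(z)}(F^a)^n = D\Phi_{a^nz}\,\mathcal{D}_z(a^n)\,(D\Phi_z)^{-1}. \]
The transfer matrices are uniformly bounded, so $\frac1n\log$ of every vector growth is the same on both sides. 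Thus the two cocycles have identical Lyapunov exponents, and in particular the sign statement in (c) holds.

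For (d), write $\mu=\int\mu_x\,dm_{G/\Gamma}$, using that $\pi_*\mu$ is Haar. Then $\mu'_x=(\gamma_x(u_x))_*\mu_x$.

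For the forward direction, suppose $\mu$ is $U$-invariant. For $u\in U$ the fiber measures satisfy $\mu_{ux}=\gamma_x(u)_*\mu_x$ for a.e.\ $x$. Apply this with $u=u_x$, so that $u_xx=\psi(x)$, and use the cocycle identity~\eqref{eq:return_cocyle}. This gives $\mu'_x=\mu_{\psi(x)}$ up to a push-forward by a diffeomorphism that depends only on $\psi(x)$. Hence $\rho$ is constant on atoms, i.e.\ $\xi$-measurable.

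For the converse, run the same computation backwards. If $\rho$ is $\xi$-measurable, then $\mu$ is invariant under those $u\in U$ that keep $x$ inside its atom; this is local $U$-invariance on the sets $\xi_1(x)$. To globalise, I will use two facts.
\begin{itemize}
\item The conjugacy $F^a\circ\Phi=\Phi\circ a$ together with $\mu'$-invariance shows that $\rho$ is also $a^{-n}\xi$-measurable for every $n$.
\item Since $U\le G^-_v$ and $a\in\mathcal{C}_v$, the map $a^{-n}$ expands $U$. The atoms $a^{-n}[a^nx]_\xi$ therefore contain the balls $B^U_{R_n}x$ with $R_n\to\infty$ for a.e.\ $x$, using Poincaré recurrence to get $\delta$ bounded below along a subsequence.
\end{itemize}
Together these give invariance under all of $U$.

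I expect this converse in (d) to be the main obstacle. The step that needs the most care is checking that the cocycle bookkeeping for $\Phi$ is compatible across the nested partitions $a^{-n}\xi$, so that local invariance on each atom really glues into full $U$-invariance.
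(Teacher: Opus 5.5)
Your plan follows the paper's proof. Part (a) is by construction, (b) by cocompactness, (c) from the conjugacy $F^a\circ\Phi=\Phi\circ a$, and (d) by local invariance on $\xi$-atoms, globalised because the atoms of $a^{-n}\xi$ exhaust $U$-orbits. Parts (a)--(c), the forward half of (d), and your expansion/recurrence argument for the exhaustion are fine. Your observation that $\Phi_x=\gamma_x(u_x)$ ranges over a finite subset of $\Gamma$, so the transfer maps in (c) are uniformly bi-Lipschitz, is a detail the paper leaves implicit.

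The gap is the first bullet of your converse. The claim that $\rho$ is $a^{-n}\xi$-measurable is false in general, and the conjugacy does not give it.
\begin{itemize}
\item Invariance of $\mu'$ under $F^a$ gives $\rho(ax)=(F^a_x)_*\rho(x)$ with $x\mapsto F^a_x$ $\xi$-measurable. Since $a[x]_\xi\subset[ax]_\xi$, this only transports $\xi$-measurability of $\rho$ to the \emph{finer} partitions $a^n\xi$. Reaching the coarser $a^{-n}\xi$ this way is circular.
\item $\Phi$ transports each fibre to the base point $\psi(x)$ chosen in the $\xi$-atom. So even when $\mu$ is $U$-invariant one has $\rho(x)=\mu_{\psi(x)}$. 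Take $x'$ in the same $a^{-n}\xi$-atom but another $\xi$-atom, with $\psi(x')=g\psi(x)$ and $g\in U$. Then $\rho(x')=\gamma_{\psi(x)}(g)_*\rho(x)$. For large $g$ this return-cocycle element is non-trivial and need not preserve $\rho(x)$.
\end{itemize}

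So the fixed normalisation $\Phi$ is exactly what fails to be compatible across the nested partitions, and you must renormalise for each $n$. Use the selection $\psi_n=a^{-n}\circ\psi\circ a^n$ for $a^{-n}\xi$, i.e.\ $u^{(n)}_x=a^{-n}u_{a^nx}a^n$, and put $\rho_n(x)=\gamma_x(u^{(n)}_x)_*\mu_x$. The cocycle identity together with $\gamma_x(a^n)_*\mu_x=\mu_{a^nx}$ (the $a$-invariance of $\mu$ itself) gives
\[
\rho_n(x)=\gamma_{\psi(a^nx)}(a^{-n})_*\,\rho(a^nx),
\]
which is constant on $a^{-n}[a^nx]_\xi$. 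Now rerun your $\xi$-level computation with $\psi_n$ in place of $\psi$, using $u^{(n)}_x=u^{(n)}_{ux}u$. It yields $\mu_{ux}=\gamma_x(u)_*\mu_x$ whenever $ux\in a^{-n}[a^nx]_\xi$. For each fixed $u\in U$ these sets of $x$ increase to a conull set by your second bullet, so $u_*\mu=\mu$. The paper's phrase ``$x\mapsto\mu_x$ is $a^{-n}\xi$-measurable'' has to be read in this renormalised, equivariant sense.
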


\begin{proof}
    (a) is true by construction, (b) by cocompactness of $\Gamma$, and (c) by Proposition~\ref{prop:measurable_conjugacy}. For (d), first note that if $\mu$ is $U$-invariant then $\rho$ is clearly $\xi$-measurable. 
    
    Conversely, suppose $\rho$ is $\xi$-measurable. Then for $\mu$-a.e. $(x,y) \in G/\Gamma\times M$, for any $u \in U$ such that $ux \in [x]_\xi$, we have that $\mu_x = \mu_{ux}$. By standard properties of conditional measures, we have that $(u_*\mu)_x = \mu_{ux}$ (see \cite[Corollary 5.24]{EinsiedlerWard2011}). 
    
    Since $\mu$ is $a$-invariant we also have that $x \mapsto ((a^{-n})_*\mu)_x$ is $\xi$-measurable, and so $x\mapsto \mu_x$ is $a^{-n}\xi$-measurable. Since $ \bigcup_{n\geq 0}a^{-n}\xi = U$ we repeat this argument for all $n \geq 0$ to obtain that $\mu$ is $U$-invariant.
\end{proof}

\subsection{Avila-Viana Invariance Principle} We recall first the general setup. As before let $(X,\mathcal{B}_X,\nu_X)$ be a Lebesgue space, and $f:X\to X$ an invertible $\nu_X$-preserving measurable transformation. Let $M$ be a compact Riemannian manifold, and consider the trivial bundle $\mathcal{E} = X \times M$ with the projection map $\pi:\mathcal{E}\to X$.

\myindent Let $F:\mathcal{E}\to \mathcal{E}$ be a $\mathcal{B}_X\otimes \mathcal{B}_M$-measurable transformation. We say furthermore that it is a \emph{smooth cocycle over $f$} if it is of the form $F(x,y) = (f(x),F_x(y))$, where $F_x$ is a $C^1$-diffeomorphism of $M$ for each $x$. Denote by $F^{k}_x$ the $M$-component of $F^k$ over $x\in X$. More precisely, we set 
\begin{equation*}
    F_x^{k} := \begin{cases}
        F_{f^{k-1}(x)}\circ\cdots\circ F_x & \text{for } k\geq 0 \\
        (F_{f^{-k}(x)})^{-1}\circ \cdots\circ (F_{f^{-1}(x)})^{-1} & \text{for } k < 0.
    \end{cases}
\end{equation*}
This should not be confused with $(F_x)^k$.

Suppose that $F$ satisfies a mild integrability assumption, that is
\begin{equation}
\label{eq:L_1_of_inverse}
    \int |\log (\sup_{y\in M} \|D_yF_x^{-1}\|_{\operatorname{op}})|\, d\nu_X(x) < \infty.
\end{equation}

In this case, for any $F$-invariant probability measure $\mu$ on $\mathcal{E}$ such that $\pi_*\mu = \nu_X$, we can define the \emph{minimal Lyapunov exponent} via
\begin{equation*}
    \lambda_-((x,y),\mu,F) = \lim_{n\to \infty} \frac{1}{n}\log\|(D_yF_x^{n})^{-1}\|_{\operatorname{op}}^{-1}.
\end{equation*} 

This is well-defined $\mu$ almost everywhere, and is completely analogous to \eqref{eq:average_top_lyap}. The following result is \cite[Theorem 2]{brown_c1_2022}, which is a version of \cite[Theorem B]{avila_extremal_2010} only requiring the condition \eqref{eq:L_1_of_inverse}.

\begin{theo}[Avila-Viana invariance principle]
\label{theo:Avila-Viana}
    Let $F$ be a smooth cocycle over $f$ satisfying \eqref{eq:L_1_of_inverse}, and let $\mu$ be an $F$-invariant measure on $\mathcal{E}$ which projects under $\pi$ to $\nu_X$. Suppose that $\mathcal{B}_\xi$ is a $\sigma$-algebra on $X$ that generates $\mathcal{B}_X$ (mod 0) under $f$. 
    Suppose further that $f$ and $x \mapsto F_x$ are $\mathcal{B}_\xi$-measurable, and that $\lambda_-((x,y),\mu,F) \geq 0$ for $\mu$-a.e. $(x,y)$. Then the disintegration $x \mapsto \mu_x$ is $\mathcal{B}_\xi$ measurable.
\end{theo}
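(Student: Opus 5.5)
The plan is to follow the entropy argument of Avila and Viana \cite[Theorem B]{avila_extremal_2010}, in the form adapted to $C^1$ cocycles in \cite{brown_c1_2022}; only the integrability hypothesis \eqref{eq:L_1_of_inverse} should be needed in place of $C^2$ or H\"older control. Write $\mathcal{B}_\xi$ for the given $\sigma$-algebra and $\mu_x$ for the fiber disintegration. The idea is to compare two families of conditional measures on the fibers $\{x\}\times M$: the fine disintegration $x\mapsto \mu_x$, with respect to $\pi^{-1}\mathcal{B}_X$, and the coarse family $x\mapsto \mu^\xi_x:=\mathbb{E}[\mu_\cdot\mid \mathcal{B}_\xi](x)$, obtained by averaging $\mu_{x'}$ over the $\xi$-atom of $x$. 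The goal is to show that these agree almost everywhere.

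First I would set up the dynamics. Since $f$ and $x\mapsto F_x$ are $\mathcal{B}_\xi$-measurable, $F$-invariance of $\mu$ gives $(F_x)_*\mu_x=\mu_{f(x)}$. Averaging this identity over atoms shows $(F_x)_*\mu^\xi_x=\mu^{f\xi}_{f(x)}$, where $\mu^{f\xi}$ denotes the conditional family with respect to $f\mathcal{B}_\xi$. Because $\mathcal{B}_\xi$ generates $\mathcal{B}_X$ under $f$, the increasing sequence $f^n\mathcal{B}_\xi$ gives martingale convergence $\mu^{f^n\xi}_x\to\mu_x$. Next I would introduce a Kullback--Leibler type quantity: fix a fine finite partition $\mathcal{Q}$ of $M$ into sets with small boundary, and consider the conditional entropy of $\pi^{-1}\mathcal{B}_X\vee\mathcal{Q}$ relative to $\pi^{-1}\mathcal{B}_\xi\vee\mathcal{Q}$. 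This is a fiberwise relative entropy that vanishes exactly when $\mu_x=\mu_x^\xi$. Invariance and the increasing filtration make the successive gains $H_n-H_{n-1}$ stationary, so the quantity is determined by a single step of the cocycle, an ``$F$-entropy'' $h(\mu\mid\xi)\ge 0$.

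The main step, and the main obstacle, is the inequality
\begin{equation*}
h(\mu\mid\xi)\;\leq\;\int -\min\{\lambda_-((x,y),\mu,F),0\}\,d\mu,
\end{equation*}
which is a Ruelle/Ledrappier-type bound. Its right-hand side vanishes under the hypothesis $\lambda_-\geq 0$. In the $C^2$ setting this follows from a Jacobian or volume-growth argument. In the $C^1$ setting I would instead run the Ma\~n\'e-style counting argument: at the time scale $n$, the image of a partition element under $F^{-n}_x$ is covered by roughly $\exp(n\cdot\max(0,-\lambda_-)+n\varepsilon)$ sets of a fixed small partition. This needs only uniform continuity of $D F_x$ on the compact $M$ together with \eqref{eq:L_1_of_inverse}, applied through Kingman's subadditive ergodic theorem to $\log\|(DF^n_x)^{-1}\|$, to control the exceptional set where the inverse derivative is large. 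Controlling this non-uniformity in $x$ without a distortion estimate is where I expect the real work.

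Once $h(\mu\mid\xi)=0$, the stationarity shows that every step of the martingale carries zero conditional information. Hence $\mu^{f^n\xi}_x=\mu^\xi_x$ for all $n$, and letting $n\to\infty$ gives $\mu_x=\mu^\xi_x$ almost everywhere, which is exactly the $\mathcal{B}_\xi$-measurability of $x\mapsto\mu_x$. Finally, refining $\mathcal{Q}$ along a generating sequence of partitions of $M$ removes the dependence on the auxiliary partition.
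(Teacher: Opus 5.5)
Your overall route is the one the paper relies on. The paper gives no argument for this statement and simply imports it as \cite[Theorem 2]{brown_c1_2022}, a version of \cite[Theorem B]{avila_extremal_2010} needing only \eqref{eq:L_1_of_inverse}. As a rendering of that argument, however, your bookkeeping that converts an entropy bound into measurability does not work as set up. Read literally, the conditional entropy of $\pi^{-1}\mathcal{B}_X\vee\mathcal{Q}$ given $\pi^{-1}\mathcal{B}_\xi\vee\mathcal{Q}$ is $H_\mu(\pi^{-1}\mathcal{B}_X\mid\pi^{-1}\mathcal{B}_\xi\vee\mathcal{Q})\ge H_{\nu_X}(\mathcal{B}_X\mid\mathcal{B}_\xi)-\log\#\mathcal{Q}=+\infty$ whenever the conditionals of $\nu_X$ on $\xi$-atoms are non-atomic, as in the application. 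The object you describe is the conditional mutual information $H_\mu(\mathcal{Q}\mid\pi^{-1}\mathcal{B}_\xi)-H_\mu(\mathcal{Q}\mid\pi^{-1}\mathcal{B}_X)$. More seriously, for a fixed finite $\mathcal{Q}$ its increments along $f^n\mathcal{B}_\xi$ are \emph{not} stationary. Pulling back by $F^n$ turns the $n$-th increment into the one-step increment for the moved partition $\{(x,y):F^n_x(y)\in Q\}_{Q\in\mathcal{Q}}$. They cannot be stationary, because the $n$-step quantity is at most $\log\#\mathcal{Q}$ for all $n$. Equal increments would then all vanish, and your final refinement step would prove the conclusion for every cocycle with no hypothesis on $\lambda_-$, which fails for contracting examples.

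The fix is to work with the full Borel $\sigma$-algebra of $M$. Let $h\in[0,\infty]$ be the conditional mutual information between the fiber coordinate and $\pi^{-1}f\mathcal{B}_\xi$ given $\pi^{-1}\mathcal{B}_\xi$. Since $F^k_x$ is $\mathcal{B}_\xi$-measurable and bijective, replacing $y$ by $F^k_x(y)$ does not change this quantity. So every increment equals $h$, the $n$-step quantity is exactly $nh$, and $h=0$ gives your conclusion by the martingale argument. The Lyapunov bound must then hold for this full-$\sigma$-algebra quantity, i.e.\ uniformly in the scale $r$ of $\mathcal{Q}$. The Ma\~n\'e count provides this. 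If $(F^n_x)^{-1}$ expands by at most $e^{n\varepsilon}$ near the relevant points, the scale-$r$ cell of $F^n_x(y)$ pins down the scale-$r$ cell of $y$ up to $Ce^{n\varepsilon\dim M}$ choices. This loss is independent of $r$, so the bound $nh\le n\varepsilon\dim M+\log C$ survives $r\to0$; that uniformity, not a separate limit, is what justifies refining $\mathcal{Q}$. Your displayed inequality also needs a factor $\dim M$ (or the sum of the negative parts of all exponents): $d$ independent copies of one contracting example multiply the left side by $d$ and leave $\lambda_-$ unchanged. This is harmless here because the right side vanishes. Finally, the genuinely hard part is still only a plan. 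You must localise the count, since $\lambda_-\ge0$ controls $\|(D_yF^n_x)^{-1}\|$ along typical orbits but not $\sup_{y}$, and absorb the bad set via \eqref{eq:L_1_of_inverse}. That is exactly the content of \cite[Theorem 2]{brown_c1_2022}, so, as in the paper, your argument is complete only by citing it.
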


\subsection{Proof of Theorem~\ref{theo:invariance_under_stanble_and_unstable_horosphericals}}

In order to be in a position to apply Theorem~\ref{theo:Avila-Viana}, we need to be able to find zeroes of the Lyapunov exponents $\lambda_i(\cdot,\mu,\mathcal{D})$, but there is no reason for there to be any in $A \cong \Z^r$. For this purpose we extend our action to one of $\R^r$.

Consider $\R^r \times M^\alpha \cong \R^r\times G/\Gamma \times M$. This admits a left $\R^r$-action that is trivial in the $M^\alpha$ component, and a right $\Z^r$-action via
\begin{equation*}
    (v,x,y) \cdot a := (v+a,a^{-1}\cdot(x,y))
\end{equation*}
As before, these two actions commute and so descend to an action of $\R^r$ on 
\begin{equation*}
    \widehat{M^\alpha}:= (\R^r\times M^\alpha)/\Z^r.
\end{equation*}
This is a bundle over $\R^r/\Z^r$ with fibers $M^\alpha$. After identifying $\widehat{M^\alpha}$ measurably with $\R^r/\Z^r \times M^\alpha$, we equip it with the measure $m_{\R^r/\Z^r}\otimes \mu$, where $m_{\R^r/\Z^r}$ denotes the Haar measure and $\mu$ is our $A$-invariant and ergodic measure that projects to the Haar measure $m_{G/\Gamma}$.

\myindent We extend the cocycle $\mathcal{D}$ to a cocycle $\widehat{\mathcal{D}}$ for the $\R^r$-action over $\widehat{M^\alpha}$, via the formula
\begin{equation*}
    \widehat{\mathcal{D}}_{(v,x,y)}w := \mathcal{D}_{(x,y)}(\lfloor v+w\rfloor) \quad \text{for } (v,x,y) \in \widehat{M^\alpha}, w\in \R^r.
\end{equation*}
Here $\lfloor\cdot\rfloor:\R^r\to \Z^r$ is the floor function, which serves the role of the return cocycle with respect to the fundamental domain $[0,1)^r$. It is clear by definition that for $a\in \Z^r \leq \R^r$ and $m_{\R^r/\Z^r}\otimes \mu$-a.e. $(v,x,y) \in \widehat{M^\alpha}$ we have that
\begin{equation}
    \label{eq:cocycles_agree}
    \widehat{\mathcal{D}}_{(v,x,y)}a  = \mathcal{D}_{(x,y)}a
\end{equation}

We can consider the Lyapunov cocycles of $\widehat{\mathcal{D}}$ given by Theorem~\ref{theo:Oseledets}. Since the $\R^r$-action on $\R^r/\Z^r$ is ergodic they are a.e. constant, and by \eqref{eq:cocycles_agree} we in fact have (up to relabelling) that 
\begin{equation*}
    \lambda_i(\cdot,\mu,\mathcal{D}) = \lambda_i(\cdot,m_{\R^r/\Z^r}\otimes \mu,\widehat{\mathcal{D}})|_{\Z^r}.
\end{equation*}

Consider the $\sigma$-algebra $\mathcal{B}_{\R^r/\Z^r}\otimes \mathcal{B}_\xi$ on $\R^r/\Z^r\times G/\Gamma$, and the Borel Isomorphism $\widehat{\Phi}:\widehat{M^\alpha} \to \R^r/\Z^r \times G/\Gamma \times M$ where $\widehat{\Phi} := \Id \times \Phi$.

\begin{proof}[Proof of Theorem~\ref{theo:invariance_under_stanble_and_unstable_horosphericals}]
    If $\dim M < \dim A$, then there is some 
    \begin{equation*}
        v\in \bigcap_i \ker \lambda_i(\cdot,m_{\R^r/\Z^r}\otimes \mu,\widehat{\mathcal{D}}).
    \end{equation*} 
    If $\dim M = \dim A$ and the action is volume preserving then the Lyapunov exponents sum to zero, so again we can find some $v$ in the common kernels. Either way, there is some $v$ with $\lambda_-(v,m_{\R^r/\Z^r}\otimes \mu,\widehat{\mathcal{D}}) = 0$. We are now in a position to apply Theorem~\ref{theo:Avila-Viana}, and by Proposition~\ref{prop:measurability_properties} we conclude that $\mu$ is $G_v^-$-invariant. By repeating the same argument for $-v$ we conclude that $\mu$ is $G_v^+$-invariant, giving the Theorem.
\end{proof}
	\printbibliography[title=References]
\end{document}